\theoremstyle{plain}
\newtheorem{theorem}{Theorem}[section]
\newtheorem{lemma}{Lemma}[section]
\newtheorem{remark}{Remark}[section]
\newtheorem{proposition}{Proposition}[section]
\numberwithin{equation}{section}
\newcommand{\dbtilde}[1]{\accentset{\approx}{#1}}
\apptocmd{\sloppy}{\hbadness 10000\relax}{}{}
\let\today\relax
\def\ps@pprintTitle{%
	\let\@oddhead\@empty
	\let\@evenhead\@empty
	\def\@oddfoot{\footnotesize\itshape
		\hfill\today}%
	\let\@evenfoot\@oddfoot
}
\begin{document}

\begin{frontmatter}

	\title{Hopf superalgebra structure for Drinfeld super Yangian of Lie superalgebra B(m,n)}

	\author[1]{Alexander Mazurenko}
	\ead{mazurencoal@gmail.com}

	\address[1]{MCCME (Moscow Center for Continuous Mathematical Education)}

	\begin{abstract}
		We construct a Hopf superalgebra structure for a Drinfeld super Yangian of Lie superalgebra $B(m,n)$ relative to all possible choices of Borel subalgebras. In order to do this we introduce a simplified definition of Drinfeld super Yangians.
	\end{abstract}

	\begin{keyword}
		Lie superalgebra, Basic Lie superalgebra, Drinfeld Super Yangian, Hopf Superalgebra, Minimalistic Presentation

		MSC Primary 16W35, Secondary 16W55, 17B37, 81R50, 16W30
	\end{keyword}

\end{frontmatter}

\section*{Acknowledgements}

This work is funded by Russian Science Foundation, scientific grant 21-11-00283. With gratitude to Stukopin V.A., who introduced the author to the world of superalgebras.

\vspace{10pt}

\section{Introduction}

\vspace{10pt}

The Drinfeld super Yangian, a concept rooted in the rich theory of superalgebras and Lie superalgebras, has gained substantial attention in recent mathematical research. To learn more details about the structure of Drinfeld super Yangians, refer to the following works: \cite{S04}, \cite{M21a}, \cite{A02}, \cite{Z96}, \cite{U19}. In particular, the exploration of its Hopf superalgebra structure has been an area of significant interest. This paper delves into precisely this area of study, focusing on the Drinfeld super Yangian associated with the Lie superalgebra $B(m,n)$.

The concept of the Drinfeld super Yangian for the Lie superalgebra $A(m, n)$, which serves as a precursor to our exploration, has been introduced relative to specific subclasses of Borel subalgebras, as detailed in \cite{MS22}. Building on this groundwork, we extend our focus to the Lie superalgebra $B(m, n)$. The formal definition of the Drinfeld super Yangian for $B(m, n)$ relative to distinguished Borel subalgebra is presented in \cite{M21}, establishing a foundation for our current study.

One of the central objectives of this paper is to provide insights how to generalize our results across a broader spectrum of basic Lie superalgebras, encompassing all possible choices of Borel subalgebras. This extension necessitates the introduction of additional relations in the definitions of both Lie superalgebras and Drinfeld super Yangians, along with supplementary lemmas. However, the fundamental approach to our proofs remains consistent.

Our paper is structured as follows:

Section \ref{sec:prelim} - Preliminaries: This section provides essential background information, including fundamental algebraic concepts (Subsection \ref{subs:LS}) and general definitions and results pertaining to Lie superalgebras $B(m, n)$ (Subsection \ref{subsc:bs}).

Section \ref{sec:DSY} - Drinfeld Super Yangians: Here, we introduce the fundamental definitions and concepts associated with Drinfeld super Yangians. Subsection \ref{sect:DSY} presents the definition of the Drinfeld super Yangian of Lie superalgebra $B(m, n)$ relative to all possible choices of Borel subalgebras. In Subsection \ref{sb:mpsy}, we provide a minimalistic representation of the Drinfeld super Yangian, a crucial prerequisite for constructing the Hopf superalgebra structure discussed in Section \ref{sec:hssdsy}.

Section \ref{sec:hssdsy} - Hopf Superalgebra Structure: This section presents the core result of our paper. In Theorem \ref{th:hssdsy}, we establish the Hopf superalgebra structure for the Drinfeld super Yangian of Lie superalgebra $B(m, n)$, extending our findings from the minimalistic representation presented in Section \ref{sb:mpsy}.

Section \ref{sec:par} - Proof of Theorem \ref{th:mpy}: The final section is dedicated to proving Theorem \ref{th:mpy}, which is foundational for our exploration of the Hopf superalgebra structure. Subsection \ref{sub:grdsy} introduces special elements necessary for constructing the minimalistic presentation, while Subsection \ref{sub:pmpt} contains auxiliary lemmas crucial for establishing the theorem.

This structured approach allows us to build a comprehensive understanding of the Hopf superalgebra structure of the Drinfeld super Yangian for Lie superalgebra $B(m, n)$, and it serves as a stepping stone for broader generalizations and applications in the field.

\vspace{10pt}

\section{Preliminaries}

\label{sec:prelim}

\vspace{10pt}

We introduce general notations, definitions and state important results about Lie superalgebras.

\vspace{10pt}

\subsection{Notations}

\label{subs:LS}

\vspace{10pt}

By $\mathbb{N}$, $\mathbb{N}_{0}$, $\mathbb{Z}$ we denote the sets of natural numbers, natural numbers with zero and integers respectively. Recall that $\mathbb{Z}_{2} = \{ \bar{0}, \bar{1} \}$. Let $\Bbbk$ be an algebraically closed field of characteristic zero. Denote by $\mathcal{M}_{n,m}(A)$ a ring of $n \times m$ $(n,m \in \mathbb{N} )$ matrices over a ring $A$. We also use the Iverson bracket defined by $ [P] = \begin{cases} 1 \text{ if } P \text{ is true;} \\ 0 \text{ otherwise}, \end{cases}$ where $P$ is a statement that can be true or false. $[1,n]$ denotes $\{1, 2,  ... ,n\}$ for any $n \in \mathbb{N}$.

Note that all equations in this subsection can be extended by linearity from homogeneous to all elements in corresponding algebraic structure.

Let $A$ be an associative superalgebra. Denote for all homogeneous $x \in A$ the $\mathbb{Z}_2$-grading of $x$ by $|x| \in \mathbb{Z}_2$. One defines
\begin{enumerate}
	\item the Lie superbracket by
	\begin{equation}
		\label{eq:LS}
		[x,y] := xy - (-1)^{|x||y|} y x
	\end{equation}
	\item the anticommutator by
	\[ \{x,y\} := xy + (-1)^{|x||y|} yx; \]
	\item the graded Lie superbracket by
	\[ [x,y]_{v} := xy - (-1)^{|x||y|} v y x \]
\end{enumerate}
for all homogeneous elements $x, y \in A$ and all $v\in \Bbbk$.

Consider super vector spaces $V$ and $W$. Define the linear function $\tau_{V, W}: V \otimes W \to W \otimes V$ by
\begin{equation}
	\label{eq:taudef}
	\tau_{V, W}(v \otimes w) = (-1)^{|v| |w|} w \otimes v
\end{equation}
for all homogeneous $v \in V$ and $w \in W$.

A Lie superalgebra is a super vector space $\mathfrak{g} = \mathfrak{g}_{\bar{0}} \oplus \mathfrak{g}_{\bar{1}}$ together with a bilinear map (the Lie superbracket) $[\cdot, \cdot]: \mathfrak{g} \times \mathfrak{g} \to \mathfrak{g}$ which satisfies the following axioms:
\begin{equation}
	[ g_{ \bar{ \alpha }}, g_{ \bar{\beta}} ] \subseteq g_{\bar{ \alpha } + \bar{\beta}} \text{ for } \bar{ \alpha }, \bar{\beta} \in \mathbb{Z}_{2},
\end{equation}	
\begin{equation}
	[x, y] = - (-1)^{|x| |y|} [y, x],
\end{equation}
\begin{equation}
	\label{eq:sje}
	(-1)^{|x||z|} [x,[y,z]] + (-1)^{|x||y|} [y,[z,x]] + (-1)^{|z||y|}[z,[x,y]] = 0
\end{equation}
for all homogeneous $x, y, z \in \mathfrak{g}$.

The linear mapping $\textnormal{ad}_{x}: \mathfrak{g} \to \mathfrak{g}$ is called the adjoint action and is defined by $\textnormal{ad}_{x}(y) = [x,y]$ for all $x, y \in \mathfrak{g}$. Denote by $\textnormal{id}_{\mathfrak{g}}$ the identity map on $\mathfrak{g}$.

We use results about formal power series proved in \cite{N69}. Let $A$ be a ring. Consider the ring of formal power series $A[[X]]$. Define for each $f \in 1+XA[[X]]$
\[ \log(f) := \sum_{n=1}^{\infty} \frac{(-1)^{n+1}}{n} (f-1)^{n} \in XA[[X]]. \]

\vspace{10pt}

\subsection{Lie Superalgebras}

\label{subsc:bs}

\vspace{10pt}

Let $\mathfrak{g}$ be the Lie superalgebra $B(m,n) = \mathfrak{osp}(2m+1|2n)$ with $m 
\ge 0$, $n \ge 1$ relative to a Borel subalgebra $\mathfrak{b}$. Recall that $\mathfrak{g}$ has rank $m+n$ and set $I = \{1,2,..,m+n\}$. Suppose that $\mathfrak{g}$ has a simple root system $\Delta^{0} = \{ \alpha_{i} \}_{i \in I}$ and a Cartan subalgebra $\mathcal{H} = \langle h_{i} \rangle_{i \in I}$. Denote the corresponding simple root generators by $ e_{i}$ and $f_{i}$ for all $i \in I$. For $\tau \subseteq I$, we define a $\mathbb{Z}_{2}$-gradation by setting $|e_{i}| = \bar{0}$ ($|f_{i}| = \bar{0}$) if $i \notin \tau$ else $|e_{i}| = \bar{1}$ ($|f_{i}| = \bar{1}$) for all $i \in I$. Moreover, set a $\mathbb{Z}_{2}$-gradation on $I$ by setting $|i| = \bar{0}$ if $i \notin \tau$ else $|i| = \bar{1}$ for all $i \in I$.

Decompose $\mathfrak{g}$ using its canonical root decomposition as follows:
\[ \mathfrak{g} = \mathcal{H} \oplus \bigoplus_{ \alpha \in \Delta } \mathfrak{g}^{ \alpha } \]
where $\Delta$ is a root system of $\mathfrak{g}$ and
\[ \mathfrak{g}^{ \alpha } = \{ x \in \mathfrak{g} \; | \; [h,x] = \alpha(h) x, h \in \mathcal{H} \} \]
is the root space corresponding to a root $\alpha \in \Delta$. It is well known that $ \dim(\mathfrak{g}^{ \alpha }) = 1$ as a super vector space for all $\alpha \in \Delta$. Thus we can fix basis elemenets $ \{  e_{\alpha}, f_{\alpha} \}_{ \alpha \in \Delta^{+} } $ such that $\mathfrak{g}^{ \alpha } = \langle e_{\alpha} \rangle$ and $\mathfrak{g}^{ -\alpha } = \langle f_{\alpha} \rangle$ for all $\alpha \in \Delta^{+}$. Define $\textnormal{ht}( \sum_{i \in I} n_{i} \alpha_{i} ) = \sum_{i \in I} n_{i}$ for all $( n_{i} )_{i \in I} \in \mathbb{Z}^{|I|}$ where $\alpha_{i} \in \Delta^{0}$ for all $i \in I$.

By definition of basic Lie superalgebra (see \cite{K77}) there exists a unique (up to a constant factor) even nondegenerate $\mathfrak{g}$-invariant supersymmetric bilinear form $\langle . , . \rangle : \mathfrak{g} \times \mathfrak{g} \to \Bbbk $, i. e.
\begin{enumerate}
	\item $\langle \mathfrak{g}_{\bar{\alpha}} , \mathfrak{g}_{\bar{\beta}} \rangle = 0$ unless $\bar{\alpha} + \bar{\beta} = \bar{0}$ for $\bar{\alpha}, \bar{\beta} \in \mathbb{Z}_{2}$;
	\item the form induces an isomorphism $\mathfrak{g} \cong (\mathfrak{g})^{*}$;
	\item $ \langle [x,y] , z \rangle = \langle x , [y,z] \rangle$ for all $x,y,z \in \mathfrak{g}$;
	\item $ \langle x , y \rangle = (-1)^{|x||y|} \langle y , x \rangle $ for all homogeneous $x,y \in \mathfrak{g}$.
\end{enumerate}
Without loss of generality we choose a bilinear form on $\mathfrak{g}$ in such a way that
\begin{enumerate}
	\item $\langle e_{i}, f_{j} \rangle = \delta_{i,j}$ for all $i,j \in I$;
	\item $\langle e_{\alpha}, f_{\beta} \rangle = [\alpha + \beta = 0]$ for all $\alpha, \beta \in \Delta$;
	\item the restriction of $\langle . , . \rangle$ to  $\langle \mathfrak{g}^{\alpha} , \mathfrak{g}^{-\alpha} \rangle$ is nondegenerate for all $\alpha \in \Delta$. 
\end{enumerate}
Using the last property note that for any $\alpha \in \mathcal{H}^{*}$ there exists a unique element $h_{\alpha} \in \mathcal{H}$ such that $\langle h_{\alpha} , h \rangle = \alpha(h)$ for all $h \in \mathcal{H}$. Thus we can define a nondegenerate symmetric bilinear form $( . , .)$ on $\mathcal{H}^{*}$ by $( \alpha, \beta) = \langle h_{\alpha} , h_{\beta} \rangle $ for all $\alpha, \beta \in \mathcal{H}^{*}$. Now we are able to construct the symmetric Cartan matrix $C = (c_{ij})_{i,j \in I}$ of $\mathfrak{g}$ by supposing $C = ( (\alpha_{i}, \alpha_{j}) )_{i,j \in I}$. Therefore the following arithmetic conditions for elements of the Cartan matrix $C$ hold:
\begin{enumerate}
	\item $\langle h_{i}, h_{j} \rangle = (\alpha_{i}, \alpha_{j}) = c_{ij}$ for all $i,j \in I$;
	\item $c_{ij} \in \mathbb{Z}$, for all $i, j \in I$;
	\item $c_{ij} \le 0$, if $i \notin \tau$, for all $i, j \in I$.
\end{enumerate}
Let $\{ \epsilon_{i}, \delta_{j} \; | \; i \in [1,m], j \in [1,n] \}$ be the dual basis of $\mathcal{H}^{*}$. Then for $\mathfrak{g}$ the bilinear form is defined by
\begin{align}
	( \epsilon_{i}, \epsilon_{i^{'}} ) = \delta_{i,i^{'}}, \; (\delta_{j}, \delta_{j^{'}}) = -\delta_{j,j^{'}}, (\epsilon_{i}, \delta_{j}) = 0
\end{align}
for all $i, i^{'} \in [1,m]$ and $j, j^{'} \in [1,n]$. We also require the following well-known
\begin{lemma}
	\leavevmode
	\begin{enumerate}
		\item If $x \in \mathfrak{g}^{\alpha}$, $y \in \mathfrak{g}^{-\alpha}$, then $[x,y] = \langle x,y \rangle h_{\alpha}$ for all $\alpha \in \Delta$.
		\item $ [ \mathfrak{g}^{\alpha}, \mathfrak{g}^{-\alpha} ] = \langle h_{\alpha} \rangle $ for all $\alpha \in \Delta$.
		\item $ [ h, \mathfrak{g}^{\alpha} ] = \pm \langle h, h_{\alpha} \rangle \mathfrak{g}^{\alpha} $ for all $\alpha \in \Delta^{\pm}$.
	\end{enumerate}
\end{lemma}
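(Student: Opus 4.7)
My plan is to handle the three parts in the order given, since (1) feeds into (2) and (3) is essentially a separate unpacking of the definitions of the root space and of $h_\alpha$.

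For (1), the first observation is that by the grading compatibility of the Lie superbracket, $[x,y]$ lies in $\mathfrak{g}^{\alpha-\alpha}=\mathfrak{g}^{0}=\mathcal{H}$. Since the bilinear form is nondegenerate on $\mathcal{H}$, it suffices to prove $\langle [x,y],h\rangle=\langle x,y\rangle\,\alpha(h)$ for every $h\in\mathcal{H}$. I would compute using the invariance property (property 3 of the form) and the fact that $h$ is even:
\[
\langle [x,y],h\rangle=\langle x,[y,h]\rangle=-\langle x,[h,y]\rangle=-(-\alpha(h))\langle x,y\rangle=\alpha(h)\langle x,y\rangle,
\]
where I have used $[h,y]=-\alpha(h)y$ because $y\in\mathfrak{g}^{-\alpha}$. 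Since $\alpha(h)=\langle h_{\alpha},h\rangle$ by the definition of $h_{\alpha}$, the right-hand side equals $\langle \langle x,y\rangle h_{\alpha},h\rangle$, and nondegeneracy on $\mathcal{H}$ forces $[x,y]=\langle x,y\rangle h_{\alpha}$.

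For (2), part (1) immediately gives the inclusion $[\mathfrak{g}^{\alpha},\mathfrak{g}^{-\alpha}]\subseteq\langle h_{\alpha}\rangle$. For the reverse inclusion, I would invoke the chosen property that the restriction of $\langle\cdot,\cdot\rangle$ to $\mathfrak{g}^{\alpha}\times\mathfrak{g}^{-\alpha}$ is nondegenerate: one can find $x\in\mathfrak{g}^{\alpha}$ and $y\in\mathfrak{g}^{-\alpha}$ with $\langle x,y\rangle\neq 0$, so $[x,y]=\langle x,y\rangle h_{\alpha}$ is a nonzero scalar multiple of $h_{\alpha}$. Since $h_{\alpha}\neq 0$ (it corresponds to the nonzero functional $\alpha$ under the isomorphism $\mathcal{H}\cong\mathcal{H}^{*}$ induced by the form), equality follows.

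For (3), the root-space definition gives $[h,e_{\alpha}]=\alpha(h)e_{\alpha}$ for $\alpha\in\Delta^{+}$ and $[h,f_{\alpha}]=-\alpha(h)f_{\alpha}$ for $f_{\alpha}\in\mathfrak{g}^{-\alpha}$. Rewriting $\alpha(h)$ as $\langle h_{\alpha},h\rangle=\langle h,h_{\alpha}\rangle$ (using supersymmetry of the form together with the evenness of elements of $\mathcal{H}$) yields exactly $[h,\mathfrak{g}^{\alpha}]=\pm\langle h,h_{\alpha}\rangle\mathfrak{g}^{\alpha}$ for $\alpha\in\Delta^{\pm}$, where $h_{\alpha}$ for $\alpha\in\Delta^{+}$ is the distinguished element already fixed.

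I do not anticipate a serious obstacle: the only subtlety is the careful bookkeeping of signs from the super-invariance identity $\langle[x,y],z\rangle=\langle x,[y,z]\rangle$ together with the evenness of $h\in\mathcal{H}$, which ensures that no $(-1)^{|x||y|}$ factors survive in the computation for (1). Everything else is a direct appeal to nondegeneracy of the chosen bilinear form on the appropriate subspaces.
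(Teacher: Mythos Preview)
Your proof is correct and is exactly the standard argument for these facts. Note that the paper itself does not prove this lemma: it is introduced as ``well-known'' and stated without proof, so there is no approach to compare against; your treatment of the sign bookkeeping in part~(1) via the invariance identity and the evenness of $h\in\mathcal{H}$, and your use of the nondegeneracy of $\langle\cdot,\cdot\rangle$ on $\mathfrak{g}^{\alpha}\times\mathfrak{g}^{-\alpha}$ for the reverse inclusion in part~(2), are precisely what one would expect.
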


Using all previous definitions we formulate the well-known result (for more details see \cite{Y91}, \cite{Y94}, \cite{Z11}):
\begin{proposition}
	\label{pr:supliealgdef}
	The Lie superalgebra $\mathfrak{g}$ is generated by elements $h_{i}$, $e_{i}$ and $f_{i}$ for $i \in I$ subject to the relations
	\begin{description}
	\item[LS1] for $i, j \in I$
	\begin{equation}
		[h_{i}, h_{j}] = 0, \; [h_{i}, e_{j}] = c_{ij} e_{j}, \; [h_{i}, f_{j}] = - c_{ij} f_{j}, \; [e_{i}, f_{j}] = \delta_{ij} h_{i}
	\end{equation}
	\item and the standart Serre relations
	\item[LS2] for $i, j \in I$
	\begin{equation}
		\label{eq:scstr1}
		[e_{i}, e_{i}] = [f_{i}, f_{i}] = 0, \; \text{for} \; c_{ii} = 0,
	\end{equation}
	\begin{equation}
		\label{eq:scstr2}
		[e_{i}, e_{j}] = [f_{i}, f_{j}] = 0, \; \text{for} \; i \ne j, \; c_{ij} = 0,
	\end{equation}
	\begin{equation}
		\label{eq:scstr2a}
		[ e_{i} , [ e_{i}, e_{j} ] ] = [ f_{i} , [ f_{i}, f_{j} ] ] = 0, \; \text{for} \; c_{ii} \neq 0, \; i \neq m+n, \; j \in \{i-1, i+1\},
	\end{equation}
	\begin{equation}
		\label{eq:scstr2b}
		[ e_{m+n} , [ e_{m+n} , [ e_{m+n}, e_{m+n-1} ] ] ] = [ f_{m+n} , [ f_{m+n} , [ f_{m+n}, f_{m+n-1} ] ] ] = 0,
	\end{equation}
	\item plus higher order Serre relations for type \hyperref[fig:dynkin-diagrams-type1]{$\romannumeral1$}, \hyperref[fig:dynkin-diagrams-type2a]{$\romannumeral2a$} and \hyperref[fig:dynkin-diagrams-type2b]{$\romannumeral2b$} vertices
	\item[LS3] for $j-1, j, j+1 \in I$
	\begin{align}
	\label{eq:scstr3}
	[ [ [e_{j+1}, e_{j}], e_{j-1} ] , e_{j} ] = [ [ [f_{j+1}, f_{j}], f_{j-1} ] , f_{j} ] = 0.
	\end{align}	
	\end{description}
\end{proposition}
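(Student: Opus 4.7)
The proof naturally splits into two directions: verifying that the listed relations are satisfied in $\mathfrak{g} = B(m,n)$, and verifying that they suffice to present it. The first direction is essentially computational: for \textbf{LS1} one uses the definition of the form $\langle\cdot,\cdot\rangle$, the identification $h_i = h_{\alpha_i}$, and the preceding lemma (parts (1) and (3)) to read off $[h_i,e_j] = c_{ij}e_j$ and $[e_i,f_j] = \delta_{ij}h_i$. For the standard Serre relations \textbf{LS2}, the left-hand sides all lie in one-dimensional root spaces $\mathfrak{g}^{\alpha}$ whose weights either fall outside $\Delta$ (forcing $\mathfrak{g}^\alpha = 0$) or land on an isotropic root; in the latter case, $[e_i,e_i] = 0$ follows from the supersymmetry of the bracket together with $c_{ii}=0$ and $|e_i|=\bar 1$. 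The higher order relations \textbf{LS3} are checked by exhausting the three vertex types: one writes $\alpha = \alpha_{j+1} + 2\alpha_j + \alpha_{j-1}$, checks that this is not a root of $B(m,n)$, and concludes the quadruple bracket vanishes.

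For the converse direction I would argue in the standard Chevalley–Serre style. Let $\tilde{\mathfrak{g}}$ denote the Lie superalgebra defined by the generators and relations in the statement; there is a canonical surjection $\pi : \tilde{\mathfrak{g}} \twoheadrightarrow \mathfrak{g}$. One first verifies the triangular decomposition $\tilde{\mathfrak{g}} = \tilde{\mathfrak{n}}^- \oplus \tilde{\mathcal{H}} \oplus \tilde{\mathfrak{n}}^+$ by the usual free-Lie-superalgebra-modulo-relations argument: the relations \textbf{LS1} ensure that $\tilde{\mathcal{H}}$ is abelian and that $\text{ad}\,\tilde{h}_i$ acts diagonally on the $\tilde{e}_j$'s and $\tilde{f}_j$'s with the prescribed integer weights, which immediately yields the root-space decomposition of $\tilde{\mathfrak{g}}$ by $\mathbb{Z}^{|I|}$-grading. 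The Cartan involution swapping $\tilde{e}_i \leftrightarrow \tilde{f}_i$ then reduces injectivity of $\pi$ to injectivity of $\pi$ restricted to $\tilde{\mathfrak{n}}^+$.

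The core step is to show that for every $\alpha \in \mathbb{Z}_{\ge 0}^{I}$, the weight space $\tilde{\mathfrak{n}}^+_\alpha$ has dimension at most $\dim \mathfrak{g}^\alpha$ (which is $1$ if $\alpha \in \Delta^+$ and $0$ otherwise). I would do this by induction on $\textnormal{ht}(\alpha)$. The base case $\textnormal{ht}(\alpha) \le 2$ is \textbf{LS2}. For larger heights, the inductive step amounts to reducing every iterated bracket of $\tilde{e}_i$'s of weight $\alpha$ to a chosen normal form modulo the Serre relations, which is where one exploits the Dynkin diagram structure of $B(m,n)$ relative to the fixed $\tau$: using the classical Serre relations for the non-isotropic simple roots, the odd reflections / Weyl groupoid action to move between Borels, and the higher order relations \textbf{LS3} at exactly those vertices where the standard Serre relations fail to kill the spurious weight vectors.

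The main obstacle will be precisely this last point: producing the normal form at the type \hyperref[fig:dynkin-diagrams-type1]{$\romannumeral1$}/\hyperref[fig:dynkin-diagrams-type2a]{$\romannumeral2a$}/\hyperref[fig:dynkin-diagrams-type2b]{$\romannumeral2b$} vertices and verifying that the four-term relation (\ref{eq:scstr3}) is exactly what is needed to cut the naive dimension down to the right value. Rather than redo this case analysis in full, I would invoke Yamane's presentation theorem for contragredient Lie superalgebras (\cite{Y91}, \cite{Y94}, cf.\ \cite{Z11}), which provides precisely these dimension bounds for the positive part under the listed families of relations on each admissible Borel of $B(m,n)$. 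Combining this bound with the surjection $\pi|_{\tilde{\mathfrak{n}}^+}$ forces an isomorphism, and the triangular decomposition then gives $\tilde{\mathfrak{g}} \cong \mathfrak{g}$.
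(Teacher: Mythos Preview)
The paper does not actually prove this proposition: it is stated as a ``well-known result'' with a pointer to \cite{Y91}, \cite{Y94}, \cite{Z11}, and no argument is given. Your sketch is a faithful reconstruction of what those references contain---verifying the relations in the concrete realization, then running the Chevalley--Serre dimension-count on $\tilde{\mathfrak n}^{+}$ and invoking Yamane's presentation theorem for the hard step---so in substance you are aligned with the paper's (implicit) approach. One small caution: for the forward direction at the type~\hyperref[fig:dynkin-diagrams-type2a]{$\romannumeral2a$}/\hyperref[fig:dynkin-diagrams-type2b]{$\romannumeral2b$} vertices the weight $\alpha_{j-1}+2\alpha_j+\alpha_{j+1}$ can in fact be a root of $B(m,n)$, so vanishing of the quadruple bracket there is not just a ``weight falls outside $\Delta$'' argument; you need to compute in the matrix realization or use the invariant form, as in \cite{Y94,Z11}.
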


\begin{remark}
	Below $\times$ dots represent either white dots associated to even roots or grey dots associated to isotropic odd roots. Black dot corresponds to non-isotropic odd root.
	
	\begin{center}
		\begin{minipage}{0.5\textwidth}
			\centering	
			\begin{tikzpicture}
				\tikzset{/Dynkin diagram, root radius=.17cm}
				\dynkin[text style/.style={scale=1.0},labels*={j-1,j,j+1},edge length=1.7cm]A{xtx}
			\end{tikzpicture}
			\captionsetup{type=figure}
			\captionof{figure}{type $\romannumeral1$}
			\label{fig:dynkin-diagrams-type1}
		\end{minipage}%
		\begin{minipage}{0.5\textwidth}
			\centering
			\begin{tikzpicture}
				\tikzset{/Dynkin diagram, root radius=.17cm}
				\dynkin[text style/.style={scale=1.0},labels*={j-1,j,j+1},edge length=1.7cm]B{xto}
			\end{tikzpicture}
			\captionsetup{type=figure}
			\captionof{figure}{type $\romannumeral2a$}
			\label{fig:dynkin-diagrams-type2a}
		\end{minipage}%
		
		\vspace{2em}
		
		\begin{minipage}{0.5\textwidth}
			\centering
			\begin{tikzpicture}
				\tikzset{/Dynkin diagram, root radius=.17cm}
				\dynkin[text style/.style={scale=1.0},labels*={j-1,j,j+1},edge length=1.7cm]B{xt*}
			\end{tikzpicture}
			\captionsetup{type=figure}
			\captionof{figure}{type $\romannumeral2b$}
			\label{fig:dynkin-diagrams-type2b}
		\end{minipage}%
	\end{center}
	
\end{remark}

\label{rm:lsr}
\begin{remark}
	\leavevmode
	\begin{enumerate}		
		\item Relations \eqref{eq:scstr1} are also valid for $i \notin \tau$ by \eqref{eq:LS}.
		\item Relations \eqref{eq:scstr2a} are also valid for $c_{ii} = 0$, $i \neq m+n$, and $j \in \{i-1, i+1\}$, but in that case, they already follow from \cref{eq:LS,eq:sje,eq:scstr1}.
		\item Relations \eqref{eq:scstr3} are also valid for $c_{jj} \neq 0$ (in that case $|e_{j}| = |f_{j}| = \bar{0}$), but in that case, they already follow from \cref{eq:LS,eq:sje,eq:scstr1,eq:scstr2} (see \cite[Lemma 6.1.1]{Y94}).
	\end{enumerate}
\end{remark}
To find out how look like all Dynkin diagrams and simple root systems for all possible Borel subalgebras of $\mathfrak{g}$ see for example \cite{Z11}.

By assumption $\mathcal{H} = \langle h_{i} \rangle_{i \in I}$. We can construct an orthonormal basis $\{ h^{(k)} \}_{k \in I}$ of $\mathcal{H}$ with respect to the $\langle . , . \rangle$, i. e. $\langle h^{(i)}, h^{(j)} \rangle = \delta_{ij}$ for all $i,j \in I$. Then the Casimir operator $\Omega \in \mathfrak{g} \otimes \mathfrak{g}$ is given by the formula
\begin{align}
	\nonumber
	\Omega = \sum_{k \in I} h^{(k)} \otimes h^{(k)} + \sum_{\alpha \in \Delta^{+}_{\bar{0}}} ( e_{\alpha} \otimes f_{\alpha} + f_{\alpha} \otimes e_{\alpha} ) +
	\sum_{\alpha \in \Delta^{+}_{\bar{1}}} ( f_{\alpha} \otimes e_{\alpha} - e_{\alpha} \otimes f_{\alpha} ) =	\\
	\label{eq:CE}
	\sum_{k \in I} h^{(k)} \otimes h^{(k)} + \sum_{ \alpha \in \Delta^{+}} (-1)^{|e_{\alpha}|} e_{\alpha} \otimes f_{\alpha} +  \sum_{ \alpha \in \Delta^{+}} f_{\alpha} \otimes e_{\alpha}.
\end{align}
The element $\Omega$ is the even, invariant, supersymmetric, i. e.
\begin{enumerate}
	\item $|\Omega| = 0$;
	\item $[ x \otimes 1 + 1 \otimes x, \Omega ] = 0$ for all $x \in \mathfrak{g}$;
	\item $\Omega = \tau_{\mathfrak{g},\mathfrak{g}}(\Omega)$ where $\tau_{\mathfrak{g},\mathfrak{g}}$ is defined by \eqref{eq:taudef}.
\end{enumerate}
Note that
\[ \sum_{k \in I} \langle h^{(k)} , h_{i} \rangle h^{(k)} = h_{i}, \]
for all $i \in I$.

\vspace{10pt}

\section{Drinfeld super Yangians}

\label{sec:DSY}

\vspace{10pt}

In this section, we present the definition of the Drinfeld super Yangian associated with the Lie superalgebra $B(m,n)$ relative to all possible Borel subalgebras. Furthermore, we develop a concise framework for expressing this Drinfeld super Yangian in a minimalistic manner.

\vspace{10pt}

\subsection{Definition of Drinfeld super Yangian}
\label{sect:DSY}

\vspace{10pt}

We use the notations introduced in Subsection \ref{subsc:bs}. Following \cite{T19a,T19b} (see also \cite{D88}, \cite{S94} and \cite{G07}), define the Drinfeld super Yangian of $\mathfrak{g}$, denoted by $Y_{\hbar}(\mathfrak{g})$, to be the unital, associative $\Bbbk[[\hbar]]$-Hopf superalgebra. It is generated by $\{ h_{i,r} , x_{i,r}^{\pm} \}^{r \in \mathbb{N}_{0}}_{i \in I}$. The $\mathbb{Z}_2$-grading of these generators is specified as follows $|h_{i,r}| = \bar{0}$, $|x_{i,r}^{\pm}| = |i|$ for all $r \in \mathbb{N}_{0}$, $i \in I$. $Y_{\hbar}(\mathfrak{g})$ is subject to the following defining relations
\begin{description}
	\item[Y1] for all $i,j \in I$ and $r,s \in \mathbb{N}_{0}$
	\begin{equation}
		\label{eq:Cer}
		[h_{i,r}, h_{j,s}] = 0,
	\end{equation}
	\item[Y2] for all $i,j \in I$ and $s \in \mathbb{N}_{0}$
	\begin{equation}
		\label{eq:SYrc1}
		[h_{i,0}, x_{j,s}^{\pm}] = \pm c_{ij} x_{j,s}^{\pm},
	\end{equation}
	\item[Y3] for $i,j \in I$ and $r,s \in \mathbb{N}_{0}$
	\begin{equation}
		\label{eq:SYrc3}
		[x_{i,r}^{+}, x_{j,s}^{-}] = \delta_{i,j} h_{i,r+s},
	\end{equation}
	\item[Y4] for $i,j \in I$ and $r,s \in \mathbb{N}_{0}$; if $i=j$, then $|i| = \bar{0}$ or ($|i|=\bar{1}$, $c_{ii} \ne 0$ and $r = 0$)
	\begin{equation}
		\label{eq:SYrc2}
		[ h_{i,r+1} , x_{j,s}^{\pm} ] - [h_{i,r} , x_{j,s+1}^{\pm}] = \pm \frac{c_{ij} \hbar}{2} \{h_{i,r}, x_{j,s}^{\pm}\},
	\end{equation}
	\item[Y5] for $i,j \in I$ and $r,s \in \mathbb{N}_{0}$
	\begin{equation}
		\label{eq:SYrc4}
		[x_{i,r+1}^{\pm}, x_{j,s}^{\pm}] - [x_{i,r}^{\pm}, x_{j,s+1}^{\pm}] = \pm \frac{c_{ij} \hbar}{2} \{ x_{i,r}^{\pm} , x_{j,s}^{\pm} \}, \text{ unless } i=j \text{ and } |i| = \bar{1},
	\end{equation}	
	\item[Y6] for $i \in I$ and $r,s \in \mathbb{N}_{0}$
	\begin{equation}
		\label{eq:SYrc5}
		[h_{i,r}, x_{i,s}^{\pm}] = 0 \text{ if } c_{ii} = 0,
	\end{equation}
	\item[Y7] for $i,j \in I$ and $r,s \in \mathbb{N}_{0}$
	\begin{equation}
		\label{eq:bssr}
		[x_{i,r}^{\pm}, x_{j,s}^{\pm}] = 0 \text{ if } c_{ij} = 0,
	\end{equation}
	\item as well as cubic super Serre relations
	\item[Y8] for $i,j \in I$ ($i \neq m+n$ in case of \hyperref[fig:dynkin-diagrams-type2b]{$\romannumeral2b$} vertices), $j \in \{i-1, i+1\}$ and $r,s,t \in \mathbb{N}_{0}$
	\begin{equation}
		\label{eq:cssr}
		[ x_{i,r}^{\pm} , [ x_{i,s}^{\pm} , x_{j,t}^{\pm} ] ] + [ x_{i,s}^{\pm} , [x_{i,r}^{\pm} , x_{j,t}^{\pm}] ] = 0 \text{ if } c_{ii} \neq 0,
	\end{equation}
	\item[Y9] in case of \hyperref[fig:dynkin-diagrams-type2b]{$\romannumeral2b$} vertices for $r_{1},r_{2},r_{3},t \in \mathbb{N}_{0}$ and $S_{3}$ - symmetric group on the set $\{1,2,3\}$
	\begin{equation}
		\label{eq:cssra}
		\sum_{\sigma \in S_{3}} [ x_{m+n,r_{\sigma(1)}}^{\pm} , [ x_{m+n,r_{\sigma(2)}}^{\pm} , [ x_{m+n,r_{\sigma(3)}}^{\pm} , x_{m+n-1,t}^{\pm} ] ] ] = 0,
	\end{equation}
	\item and quartic super Serre relations for type \hyperref[fig:dynkin-diagrams-type1]{$\romannumeral1$}, \hyperref[fig:dynkin-diagrams-type2a]{$\romannumeral2a$} and \hyperref[fig:dynkin-diagrams-type2b]{$\romannumeral2b$} vertices
	\item[Y10] for $j-1,j,j+1 \in I$ and $r,s \in \mathbb{N}_{0}$
	\begin{equation}
		\label{eq:qssr}
		[ [ x_{j-1,r}^{\pm} , x_{j,0}^{\pm} ] , [ x_{j,0}^{\pm}, x_{j+1,s}^{\pm} ] ] = 0.
	\end{equation}
\end{description}

\begin{remark}
	\begin{enumerate}
		\item Similar to Remark \ref{rm:lsr}, cubic super Serre relations \eqref{eq:cssr} also hold for $c_{ii} = 0$, $i \neq m+n$, and $j \in \{i-1, i+1\}$, but in that case they already follow from \cref{eq:sje,eq:bssr}.
		\item Similar to Remark \ref{rm:lsr}, quartic super Serre relations \eqref{eq:qssr} also hold for for $c_{jj} \neq 0$, but in that case they already follow from \cref{eq:sje,eq:bssr,eq:cssr}.
		\item Generalizing the quartic super Serre relations \eqref{eq:qssr}, the following relations also hold \cite{T19b}:
		\[ [ [x_{j-1,r}^{\pm}, x_{j,k}^{\pm}], [x_{j,l}^{\pm}, x_{j+1,s}^{\pm}]] + [ [x_{j-1,r}^{\pm}, x_{j,l}^{\pm}], [x_{j,k}^{\pm},x_{j+1,s}^{\pm}]] = 0. \]
		This, in turn, using \eqref{eq:cssr} can be rewritten as
		\[ [[[x^{\pm}_{j-1,r}, x^{\pm}_{j,k}] , x^{\pm}_{j+1,s}], x^{\pm}_{j,l}] + [[[x^{\pm}_{j-1,r}, x^{\pm}_{j,l}] , x^{\pm}_{j+1,s}], x^{\pm}_{j,k}] = 0. \]
	\end{enumerate}
	\label{rm:yangrelrew}
\end{remark}

We note that the universal enveloping superalgebra $U(\mathfrak{g})$ is naturally embedded in $Y_{\hbar}(\mathfrak{g})$ as Hopf superalgebra, and the embedding is given by the formulas $h_{i} \to h_{i,0}$, $e_{i} \to x_{i,0}^{+}$, $f_{i} \to x_{i,0}^{-}$. We shall identify the universal enveloping superalgebra $U(\mathfrak{g})$ with its image in the Drinfeld super Yangian $Y_{\hbar}(\mathfrak{g})$.

Let $Y_{\hbar}^{0}(\mathfrak{g})$ be the subalgebra generated by the elements $\{ h_{i,r} \}_{i \in I, r \in \mathbb{N}_{0}}$.

\vspace{10pt}

\subsection{Minimalistic presentation for Drinfeld super Yangian}

\label{sb:mpsy}

\vspace{10pt}

To establish the Hopf superalgebra properties of a Drinfeld super Yangian, we introduce a more convenient representation for the superalgebras $Y_{\hbar}(\mathfrak{g})$. Such work was done in \cite{S94} for $Y_{\hbar}(\mathfrak{g})$ associated only with the distinguished Dynkin diagram (the result is stated without proof), and for non-super case in \cite{GNW18} and \cite{L93}.

From defining relations we can see that $Y_{\hbar}(\mathfrak{g})$ is generated by elements $\{ h_{ir} , x_{i0}^{\pm} \}^{r \in \{0,1\}}_{i \in I}$. We use equations \eqref{eq:SYrc1}, \eqref{eq:SYrc3} and \eqref{eq:SYrc2} to get recurrent formulas
\begin{equation}
	\label{eq:rec1}
	x_{i,r+1}^{\pm} = \pm (c_{ij})^{-1} [ h_{j1} - \frac{\hbar}{2} h_{j0}^2 , x_{ir}^{\pm} ],
\end{equation}
where $r \in \mathbb{N}_{0}$; if $c_{ii} \ne 0$, then $j=i$, and if $c_{ii} = 0$, then $j=i+1$ ($i \in I$);
\begin{equation}
	\label{eq:rec2}
	h_{ir} = [ x_{ir}^{+}, x_{i0}^{-} ],
\end{equation}
where $r \ge 2$ and $i \in I$.
We introduce the auxiliary generators for $i \in I$ by setting
\begin{equation}
	\widetilde{h}_{i 1} \overset{\operatorname{def}}{=} h_{i1} - \frac{\hbar}{2} h_{i0}^2.
\end{equation}
The $\mathbb{Z}_2$-grading of these generators is specified as follows $|\widetilde{h}_{i 1}|=|h_{i,r}| = \bar{0}$, $|x_{i,r}^{\pm}| = |i|$ for all $r \in \mathbb{N}_{0}$ and $i \in I$.

\begin{theorem}
	\label{th:mpy}
	 $Y_{\hbar}(\mathfrak{g})$ is isomorphic to the superalgebra generated by $\{ h_{ir} , x_{ir}^{\pm} \}^{r \in \{0,1\}}_{i \in I}$ subject only to the relations
	 \begin{description}
	 	\item[MY1] for all $i, j \in I$ and $0 \le r,s \le 1$
 		\begin{equation}
	 		\label{eq:mpy1}
	 		[h_{ir}, h_{js}] = 0,
	 	\end{equation}
 		\item[MY2] for all $i, j \in I$ and $0 \le s \le 1$
 		\begin{equation}
 		\label{eq:mpy2}
 		[h_{i0}, x_{js}^{\pm}] = \pm c_{ij} x_{js}^{\pm},
 		\end{equation}
	 	\item[MY3] for $i, j \in I$ and $0 \le r+s \le 1$
 	\begin{equation}
 		\label{eq:mpy3}
 		[x_{ir}^{+}, x_{js}^{-}] = \delta_{ij} h_{i,r+s},
 	\end{equation}
		\item[MY4] for $i, j \in I$
 	\begin{equation}
 		\label{eq:mpy4}
 		[\widetilde{h}_{i 1}, x_{j0}^{\pm}] = \pm c_{ij} x_{j1}^{\pm}, \text{ unless } i=j \text{ and } c_{ii} = 0,
 	\end{equation}
 	\item[MY5] for $i, j \in I$
 	\begin{equation}
 		\label{eq:mpy5}
 		[x_{i1}^{\pm}, x_{j0}^{\pm}] - [x_{i0}^{\pm}, x_{j1}^{\pm}] = \pm \frac{c_{ij} \hbar}{2} \{ x_{i0}^{\pm} , x_{j0}^{\pm} \}, \text{ unless } i=j \text{ and } |i| = \bar{1},
 	\end{equation}
 	\item[MY6] for $i \in I$ and $0 \le s \le 1$
 	\begin{equation}
 		\label{eq:mpy6}
 		[h_{i1}, x_{is}^{\pm}] = 0, \text{ if } c_{ii}=0,
 	\end{equation}
	\item[MY7] for $i,j \in I$
 	\begin{equation}
 		\label{eq:mpy7}
 		[x_{i0}^{\pm}, x_{j0}^{\pm}] = 0, \text{ if } c_{ij} = 0,
 	\end{equation}
 	\item[MY8] for $i \in I$ ($i \neq m+n$ in case of \hyperref[fig:dynkin-diagrams-type2b]{$\romannumeral2b$} vertices), $j \in \{i-1, i+1\}$
 	\begin{equation}
 		\label{eq:mpy8}
 		[ x_{i0}^{\pm} , [ x_{i0}^{\pm} , x_{j0}^{\pm} ] ] = 0,  \text{ if } c_{ii} \neq 0,
 	\end{equation}
	\item[MY9] in case of \hyperref[fig:dynkin-diagrams-type2b]{$\romannumeral2b$} vertices
	\begin{equation}
		\label{eq:mpy8a}
		[ x_{m+n,0}^{\pm} , [ x_{m+n,0}^{\pm} , [ x_{m+n,0}^{\pm}, x_{m+n-1,0}^{\pm} ] ] ] = 0,
	\end{equation}
	\item[MY10] for $j-1,j,j+1 \in I$ and for type \hyperref[fig:dynkin-diagrams-type1]{$\romannumeral1$}, \hyperref[fig:dynkin-diagrams-type2a]{$\romannumeral2a$} and \hyperref[fig:dynkin-diagrams-type2b]{$\romannumeral2b$} vertices
 	\begin{equation}
 		\label{eq:mpy9}
 		[ [ x_{j-1,0}^{\pm} , x_{j0}^{\pm} ] , [ x_{j0}^{\pm}, x_{j+1,0}^{\pm} ] ] = 0.
 	\end{equation}
	 \end{description}
\end{theorem}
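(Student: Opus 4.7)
The plan is to denote by $\widetilde{Y}$ the $\Bbbk[[\hbar]]$-superalgebra defined by the generators $\{ h_{i,r}, x_{i,r}^{\pm} \}_{i \in I}^{r \in \{0,1\}}$ and relations MY1--MY10, and to construct mutually inverse $\Bbbk[[\hbar]]$-superalgebra homomorphisms $\psi: \widetilde{Y} \to Y_{\hbar}(\mathfrak{g})$ and $\phi: Y_{\hbar}(\mathfrak{g}) \to \widetilde{Y}$. The map $\psi$ is the easy direction: every relation MY$k$ is the specialization of the corresponding Y$k$ to $r,s \in \{0,1\}$, so sending each generator of $\widetilde{Y}$ to its namesake in $Y_{\hbar}(\mathfrak{g})$ is well defined.

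For the other direction, I would first extend the generating set of $\widetilde{Y}$ to all $r \in \mathbb{N}_{0}$ by declaring inductively
\[
x_{i,r+1}^{\pm} := \pm c_{ij}^{-1}[\widetilde{h}_{j,1}, x_{i,r}^{\pm}],
\qquad
h_{i,r} := [x_{i,r}^{+}, x_{i,0}^{-}] \text{ for } r \geq 2,
\]
where $j=i$ if $c_{ii}\neq 0$ and $j=i\pm 1$ with $c_{ij}\neq 0$ otherwise; one must first check that the definition of $x_{i,r+1}^{\pm}$ is independent of the admissible choice of $j$ at isotropic odd simple roots. The claim that $\phi$ exists then amounts to verifying that Y1--Y10 all hold among these inductively defined elements inside $\widetilde{Y}$. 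Granted this, $\phi\psi$ and $\psi\phi$ each fix all generators, hence are identities, which establishes the isomorphism.

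The substance of the proof is thus the verification of Y1--Y10 in $\widetilde{Y}$. Relations Y1, Y3, Y6, Y7 for general $r,s$ should follow by double induction on $r+s$, bracketing with $\widetilde{h}_{j,1}$ and applying the graded Jacobi identity \eqref{eq:sje} together with the base cases MY1, MY3, MY6, MY7. The deformed relations Y4 and Y5 are more delicate: starting from MY2, MY4, MY5 one propagates the identity through the recursion \eqref{eq:rec1}, where the constants are tuned precisely so that the $\hbar$-corrections arising from the anticommutators balance and reduce each step to the previous one.

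The main obstacle is the higher-order Serre relations Y8, Y9, Y10. My strategy, to be carried out in Section \ref{sec:par}, is to first establish, with the help of the auxiliary elements introduced in Subsection \ref{sub:grdsy}, a family of identities putting repeated brackets of the form $[\widetilde{h}_{j,1}, \cdot\,]$ into manageable normal form; the technical lemmas of Subsection \ref{sub:pmpt} then let one bootstrap from the base cases MY8--MY10 to the full families Y8--Y10 by induction on the spectral parameters $r,s,t$. Particular care is required at the type $\mathrm{IIb}$ vertex $i=m+n$, where the non-isotropic odd simple root forces the quartic base relation MY9 in place of a cubic one, and at isotropic odd vertices where consistency with Y6 constrains the admissible recursion.
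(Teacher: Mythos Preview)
Your overall framework---define $\widetilde Y$, map each way, and reduce everything to verifying Y1--Y10 inside $\widetilde Y$ for the inductively defined elements---is exactly the paper's strategy. Where your sketch departs from the paper, and where a genuine gap appears, is in the claim that ``Y1, Y3, Y6, Y7 for general $r,s$ should follow by double induction on $r+s$, bracketing with $\widetilde{h}_{j,1}$.'' Bracketing with a single $\widetilde h_{j,1}$ raises \emph{one} of the two indices, not both independently; applied to an expression symmetric in $r,s$ it produces a linear combination of the two unknowns $X(r+1,s)$ and $X(r,s+1)$, and you cannot separate them without a second, independent bracket. The paper's actual mechanism (Lemma~\ref{lm:SYrc34}, Lemma~\ref{lm:leqmp}) is to apply $[\widetilde h_{m,1},\cdot]$ and $[\widetilde h_{n,1},\cdot]$ for two (or three, for Y10) carefully chosen indices $m,n$ and check that the resulting coefficient matrix has nonzero determinant; this requires a case analysis over the Dynkin sub-diagrams and gradings.

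More seriously, Y1 for $i=j$ is not the routine case you suggest but the hardest part of the argument. One cannot simply iterate $[\widetilde h_{j,1},\cdot]$: to reach $[h_{i,r},h_{i,s}]$ for large $r$ one needs shift operators $\dbtilde h_{i'i,r}$ of higher degree, but these are only well behaved once enough of Y1 and Y4 is already known---a circularity that the paper breaks via the Levendorskii-type bootstrap (Lemmas~\ref{lm:r233}, \ref{lm:cartelrelii}, using Lemmas~\ref{lm:thfdpr}--\ref{lm:ijtrancart}). The anchor that makes this bootstrap start is the identity $[h_{j1},h_{i2}]=0$ of Remark~\ref{rm:addrel} and Lemma~\ref{lm:h12oe}, equivalent to \eqref{rm:lsar}; your sketch does not isolate this step, and without it the induction for Y1 (and hence for Y4 and Y5 at $i=j$) does not get off the ground. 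You should promote this identity to a separate lemma and make explicit that the auxiliary elements of Subsection~\ref{sub:grdsy} are needed already for Y1 and Y5, not only for the Serre relations.
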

\begin{proof}
	The proof of the theorem is established through a sequence of logical steps based on various remarks and lemmas. We summarize these key steps as follows:
	\begin{enumerate}
		\item The equality \eqref{eq:Cer} is derived from Lemmas \ref{lm:cartelrelii} and \ref{lm:cartelrelinej}.
		
		\item The relation \eqref{eq:SYrc1} is established with reference to Lemma \ref{lm:h0eq}.
		
		\item The equality \eqref{eq:SYrc3} is established with support from Lemmas \ref{lm:SYrc34} and \ref{lm:cartelrelii}.
		
		\item The relation \eqref{eq:SYrc2} is deduced using Lemma \ref{lm:hixi}.
		
		\item The relation \eqref{eq:SYrc4} is derived by means of Lemma \ref{lm:xijxijii}.
		
		\item The relation \eqref{eq:SYrc5} is obtained through the insights provided by Lemma \ref{lm:SYrc5}.
		
		\item The relations \eqref{eq:bssr} and \eqref{eq:cssr} are established based on the analysis presented in Lemma \ref{lm:bcssr}.
		
		\item The relation \eqref{eq:cssra} is derived following the findings in Lemma \ref{lm:cssra}.
		
		\item The relation \eqref{eq:qssr} is logically connected to the results of Lemma \ref{lm:leqmp}.
		
		\item Finally, the relation \eqref{rm:lsar} is established, drawing upon the insights derived from Lemma \ref{lm:lsar}.
	\end{enumerate}

	By systematically referencing these remarks and lemmas, the theorem's proof is constructed, providing a clear and organized line of reasoning.
\end{proof}
In this superalgebra, we also define elements $x_{ir}^{\pm}$ ($r \ge 2$) and $h_{ir}$ ($r \ge 2$) for $i \in I$ using \eqref{eq:rec1} and \eqref{eq:rec2}. The $\mathbb{Z}_2$-grading of these generators is specified as in $Y_{\hbar}(\mathfrak{g})$.

\begin{remark}
	\label{rm:addrel}
	As noted in \cite{GNW18} in order to prove Theorem \ref{th:mpy} we must prove that equation
	\begin{equation}
		\label{rm:lsar}
		[ [\widetilde{h}_{j1},x_{i1}^{+}] , x_{i1}^{-} ] + [ x_{i1}^{+} , [ \widetilde{h}_{j1},x_{i1}^{-} ] ] = 0
	\end{equation}
	or
    \begin{equation}
   	\label{eq:hia12}
    [h_{j1}, h_{i2}] = 0
	\end{equation}
  	can be deduced from relations \eqref{eq:mpy1} - \eqref{eq:mpy9}, where if $c_{ii} \ne 0$, then $j=i$, and if $c_{ii} = 0$, then $j=i+1$ ($i \in I$). Note that it follows from Lemmas \eqref{lm:l2} and \eqref{lm:cel02} that \eqref{rm:lsar} is equivalent to \eqref{eq:hia12}.
\end{remark}

\vspace{10pt}

\section{Hopf superalgebra structure on Drinfeld super Yangians}

\vspace{10pt}

\label{sec:hssdsy}

In this Section we explicitly describe a Hopf superalgebra structure on a Drinfeld super Yangian of Lie superalgebra $B(m,n)$. We use notations introduced in Subsection \ref{sb:mpsy}.
\begin{theorem}
	\label{th:hssdsy}
	$Y_{\hbar}(\mathfrak{g})$ is a Hopf superalgebra. 
	
	\textbf{Counit:} the counit $\epsilon: Y_{\hbar}(\mathfrak{g}) \to \Bbbk$ is defined by the following equations:
	\begin{description}
		\item[Co1]
		\begin{equation}
			\epsilon(1) = 1,
		\end{equation}
		\item[Co2] for $x \in \{ h_{ir} , x_{ir}^{\pm} \}^{r \in \mathbb{N}_{0}}_{i \in I}$
		\begin{equation}
			\epsilon(x) = 0.
		\end{equation}
	\end{description}			

	\textbf{Comultiplication:} The comultiplication $\Delta: Y_{\hbar}(\mathfrak{g}) \to Y_{\hbar}(\mathfrak{g}) \otimes Y_{\hbar}(\mathfrak{g})$ is given by the following relations:
	\begin{description}
		\item[Com1] for all $x \in \mathfrak{g}$
		\begin{equation}
			\Delta(x) = \square(x),
		\end{equation}
		\item[Com2]
			\begin{align}
			\Delta(h_{i1}) &= \square(h_{i1}) + \hbar(  h_{i0} \otimes h_{i0} + [ h_{i0} \otimes 1, \Omega^{+}] )  \nonumber \\
			&=	\square(h_{i1}) + \hbar( h_{i0} \otimes h_{i0} - \sum_{ \alpha \in \Delta^{+}} (\alpha_{i}, \alpha) x_{\alpha}^{-} \otimes x_{\alpha}^{+} ).
		\end{align}
		\item[Com3] for all $r \in \mathbb{N}_{0}$; if $c_{ii} \ne 0$, then $j=i$, and if $c_{ii} = 0$, then $j=i+1$ ($i \in I$)
		\begin{equation}
			\Delta(x_{i,r+1}^{\pm}) = \pm (c_{ij})^{-1} [ \Delta(h_{j1}) - \frac{\hbar}{2} \Delta^2(h_{j0}) , \Delta(x_{ir}^{\pm}) ],
		\end{equation}
		\item[Com4] for all $i \in I$ and $r \ge 2$
		\begin{equation}
			\Delta(h_{ir}) = [ \Delta(x_{ir}^{+}), \Delta(x_{i0}^{-}) ],
		\end{equation}
	\end{description}
	
	\textbf{Antipode:} The antipode $S: Y_{\hbar}(\mathfrak{g}) \to Y_{\hbar}(\mathfrak{g})^{op \; cop}$ is described by the following equations:
	\begin{description}
		\item[Ant1] for all $x \in \mathfrak{g}$
		\begin{equation}
			\label{eq:antipodeY1}
			S(x) = -x,
		\end{equation}
		\item[Ant2]
		\begin{equation}
			S(h_{i1}) =	-h_{i1} + \hbar ( h_{i0}^2 + \sum_{ \alpha \in \Delta^{+}} (-1)^{1+|\alpha|} (\alpha_{i}, \alpha) x_{\alpha}^{-} x_{\alpha}^{+} ).
		\end{equation}
		\item[Ant3] for all $r \in \mathbb{N}_{0}$; if $c_{ii} \ne 0$, then $j=i$, and if $c_{ii} = 0$, then $j=i+1$ ($i \in I$)
		\begin{equation}
			S(x_{i,r+1}^{\pm}) = \mp (c_{ij})^{-1} [ S(h_{j1}) - \frac{\hbar}{2} S^2(h_{j0}) , S(x_{ir}^{\pm}) ],
		\end{equation}
		\item[Ant4] for all $i \in I$ and $r \ge 2$
		\begin{equation}
			\label{eq:antipodeY2}
			S(h_{ir}) = - [ S(x_{ir}^{+}), S(x_{i0}^{-}) ].
		\end{equation}
	\end{description}		
\end{theorem}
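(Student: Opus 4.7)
The plan is to exploit the minimalistic presentation of \Cref{th:mpy} so that every Hopf-axiom verification reduces to a finite check on the level-$0$ and level-$1$ generators. Since \textbf{Com1} declares $\Delta$ to agree with the primitive coproduct $\square(x) = x\otimes 1 + 1\otimes x$ on the subalgebra $U(\mathfrak{g}) \subset Y_{\hbar}(\mathfrak{g})$, the relations MY2--MY10 restricted to $r=0$ are preserved automatically. The genuine work is to verify that the prescribed $\Delta(h_{i,1})$ and the inductively defined $\Delta(x_{i,1}^{\pm})$ satisfy MY1 at level $1$, MY4, MY5, MY6, and the level-$1$ parts of MY2 and MY3; once this holds, \textbf{Com3} and \textbf{Com4} extend $\Delta$ to the higher $x_{i,r}^{\pm}$ and $h_{i,r}$ consistently by the recurrences \eqref{eq:rec1}--\eqref{eq:rec2}.

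The computational engine is the $\mathfrak{g}$-invariance of $\Omega$, i.e.\ $[\square(x),\Omega] = 0$ for all $x \in \mathfrak{g}$, combined with the root-space expansion \eqref{eq:CE}. For MY4, I would expand $[\Delta(h_{i,1}), \Delta(x_{j,0}^{\pm})]$ using \textbf{Com2}: the classical piece returns $\pm c_{ij}\square(x_{j,1}^{\pm})$, while the $\hbar$-correction, when combined with $\Delta(x_{j,1}^{\pm})$ as prescribed by \textbf{Com3}, matches the right-hand side after the root sum collapses via the super Jacobi identity \eqref{eq:sje} and the action rules of \Cref{pr:supliealgdef}. For MY5 the same strategy produces the anticommutator on the right from Casimir-invariance applied to $x_{i,0}^{\pm} \otimes x_{j,0}^{\pm}$. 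The level-$1$ Cartan commutativity $[\Delta(h_{i,1}), \Delta(h_{j,1})] = 0$, equivalently the identity \eqref{eq:hia12}, is the awkward relation highlighted in \Cref{rm:addrel}; I would deduce it from the MY4 and MY5 instances above via the equivalence noted there, rather than attempting a direct expansion of $[\Omega^+_{13}, \Omega^+_{23}]$.

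Coassociativity is then checked on the small set $\{h_{i,0}, x_{i,0}^{\pm}, h_{i,1}\}$: trivial on $U(\mathfrak{g})$ since $\square$ is itself coassociative, and for $h_{i,1}$ it reduces to a symmetric identity for $\Omega^+$ in the threefold tensor product that is immediate from the root-space formula. The counit $\epsilon$ annihilates every generator, hence sends every MY-relation to $0=0$, and $(\epsilon \otimes \id)\Delta = \id = (\id \otimes \epsilon)\Delta$ is verified on the same generating set. For the antipode, the same minimalistic-relation check shows that $S$ given by \textbf{Ant1}--\textbf{Ant4} extends to a superalgebra anti-homomorphism into $Y_{\hbar}(\mathfrak{g})^{op\,cop}$; the antipode axiom $m \circ (S \otimes \id)\Delta = \iota\circ\epsilon = m \circ (\id \otimes S)\Delta$ is then verified on $\{h_{i,0}, x_{i,0}^{\pm}, h_{i,1}\}$, the only nontrivial case being $h_{i,1}$, which reduces to the identity $\sum_{\alpha \in \Delta^+}(-1)^{1+|\alpha|}(\alpha_i,\alpha)[x_\alpha^-, x_\alpha^+]$ matching exactly the quadratic term in \textbf{Ant2} via the Cartan block of $\Omega$ together with the root decomposition.

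The main obstacle I anticipate is twofold. First, preserving MY10 (quartic super Serre) under $\Delta$ is delicate: the $\hbar$-corrections generated by $\Omega^+$ produce numerous four-fold nested bracket terms whose cancellation requires the higher classical Serre relations \eqref{eq:scstr3} specialised to the type-\textrm{I}, \textrm{IIa}, \textrm{IIb} vertex configurations, so the verification will branch by Dynkin-diagram type and will need the auxiliary identities of \Cref{rm:yangrelrew}. Second, the level-$1$ Cartan commutativity MY1 requires meticulous sign bookkeeping from the $\mathbb{Z}_2$-grading of the odd and non-isotropic-odd roots in $\Omega^+$; this is precisely where extending from the distinguished-Borel treatment of \cite{S94} to arbitrary Borel subalgebras of $B(m,n)$ makes the computation genuinely new and accounts for the need to isolate the extra relations MY9 and the type-\textrm{IIb} branch of MY8.
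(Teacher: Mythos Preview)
Your plan matches the paper's approach: the proof of \Cref{th:hssdsy} in the paper is a single sentence deferring to \cite[Section~4]{MS22}, and the argument there is exactly the reduce-to-minimal-generators strategy you outline, with the structural ingredients of Subsection~\ref{subsc:bs} (Casimir element, root data for $B(m,n)$ with an arbitrary Borel) substituted for the type-$A$ data.

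One correction is in order. \Cref{rm:addrel} and the identity \eqref{eq:hia12} pertain to the proof of the minimalistic presentation \Cref{th:mpy} itself, namely to showing that $[h_{j1},h_{i2}]=0$ can be derived inside $Y_{\hbar}(\mathfrak{g})$ from MY1--MY10; they are not about the tensor-product identity $[\Delta(h_{i,1}),\Delta(h_{j,1})]=0$ that you must check when verifying MY1 for $\Delta$. By the time you are proving \Cref{th:hssdsy}, \Cref{th:mpy} is already established, so $[h_{j1},h_{i2}]=0$ is a known relation in the target algebra; the verification of $[\Delta(h_{i,1}),\Delta(h_{j,1})]=0$ is then a direct expansion of the $\Omega^{+}$-correction terms in \textbf{Com2} together with the $\mathfrak{g}$-invariance of $\Omega$, not something that can be bootstrapped from the algebra-level equivalence recorded in \Cref{rm:addrel}. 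Apart from this mislabelling, your outline is sound and coincides with the intended proof.
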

\begin{proof}
	By combining the insights from the Subsection \ref{subsc:bs} with the results of Theorem \ref{th:mpy}, we can observe that the proof aligns explicitly with the one presented in \cite[Section 4]{MS22}.
\end{proof}

\vspace{10pt}

\section{Proofs of auxiliary results}

\label{sec:par}

\vspace{10pt}

The proof of Theorem \ref{th:mpy}  is splitted in several lemmas and propositions. We give all necessary proofs in this section.

\vspace{10pt}

\subsection{General results about Drinfeld super Yangians}

\label{sub:grdsy}

\vspace{10pt}

Set for any $i \in I$
\begin{equation}
	\label{eq:hln}
	\widetilde{h}_{i}(t) := \hbar \sum_{r \ge 0} \widetilde{h}_{i,r} t^{-r-1} = \log ( 1 + \hbar \sum_{r \ge 0} h_{i,r} t^{-r-1} ) \in Y_{\hbar}(\mathfrak{g})^{0}[[t^{-1}]].
\end{equation}

\begin{lemma}
	\label{lm:thfdpr}
	Let \eqref{eq:Cer} hold for $i,j \in I$ and $0 \le r,s \le v$, let \eqref{eq:SYrc2} hold for $i,j \in I$, $0 \le r \le v$ and $s \in \mathbb{N}_{0}$, and let \eqref{eq:SYrc5} hold for $i \in I$ ($c_{ii}=0$), $0 \le r,s \le v$. Then for $i,j \in I$, $0 \le r \le v$, $s \in \mathbb{N}_{0}$,
	\begin{equation}
		\label{eq:thx}
		[ \widetilde{h}_{i,r}, x_{j,s}^{\pm} ] = \pm c_{ij} x_{j,r+s}^{\pm} \pm c_{ij} \sum_{p=1}^{\lfloor r/2 \rfloor} \binom{r}{2p} \frac{(\hbar c_{ij} / 2)^{2p}}{2p+1} x_{j,r+s-2p}^{\pm}.
	\end{equation}
\end{lemma}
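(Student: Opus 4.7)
The plan is to prove the formula by induction on $r$ for $r \le v$, splitting the induction into three scenarios.

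\textbf{Base and degenerate cases.} For $r = 0$, expanding \eqref{eq:hln} gives $\widetilde{h}_{i,0} = h_{i,0}$, so $[\widetilde{h}_{i,0}, x_{j,s}^\pm] = \pm c_{ij}\, x_{j,s}^\pm$, matching the RHS whose $p$-sum is empty. For the degenerate case $i = j$ with $c_{ii} = 0$, the RHS vanishes since $c_{ij} = 0$; on the LHS, $\widetilde{h}_{i,r}$ expands as a polynomial in $h_{i,0}, \ldots, h_{i,r}$, and hypothesis \eqref{eq:SYrc5} (extended to $s \in \mathbb{N}_0$ via \eqref{eq:SYrc2}) gives $[h_{i,n}, x_{i,s}^\pm] = 0$, so Leibniz finishes the case.

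\textbf{Inductive step (remaining cases).} I would package the claim as a generating-function identity. Set $h_i(t) = 1 + \hbar\sum_r h_{i,r} t^{-r-1}$ (so $\widetilde{h}_i(t) = \log h_i(t)$) and $x_j^\pm(u) = \sum_s x_{j,s}^\pm u^{-s-1}$. Summing \eqref{eq:SYrc2} against $\hbar t^{-r-1} u^{-s-1}$ and using $[h_{i,0}, x_j^\pm(u)] = \pm c_{ij}\, x_j^\pm(u)$ to cancel boundary terms yields
\[(t - u \mp \tfrac{\hbar c_{ij}}{2})\, h_i(t) x_j^\pm(u) = (t - u \pm \tfrac{\hbar c_{ij}}{2})\, x_j^\pm(u) h_i(t) + R(t),\]
where the correction $R(t)$ is independent of $u$. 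Since $h_i(t)$ is invertible and commutes with $h_i(t')$ by \eqref{eq:Cer}, and since $R(t)$ drops out of the principal part in $u^{-1}$, we obtain $h_i(t) x_j^\pm(u) h_i(t)^{-1} = \bigl[\tfrac{t - u \pm \hbar c_{ij}/2}{t - u \mp \hbar c_{ij}/2}\, x_j^\pm(u)\bigr]_{-}$. Applying $\log$ and using $\mathrm{Ad}\,h_i(t) = \exp\mathrm{ad}\,\widetilde{h}_i(t)$ gives
\[[\widetilde{h}_i(t), x_j^\pm(u)] = \bigl[\pm\log\tfrac{t - u + \hbar c_{ij}/2}{t - u - \hbar c_{ij}/2}\, x_j^\pm(u)\bigr]_{-}.\]
Expanding $\log\tfrac{1+z}{1-z} = 2\sum_{p \ge 0} z^{2p+1}/(2p+1)$ with $z = (\hbar c_{ij}/2)/(t-u)$ and $(t-u)^{-(2p+1)} = \sum_{r \ge 2p}\binom{r}{2p} u^{r-2p} t^{-r-1}$, extracting the coefficient of $t^{-r-1}u^{-s-1}$ recovers the explicit formula.

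\textbf{Main obstacle.} The delicate step is legitimizing the $\log$ passage: the telescoping $\exp\mathrm{ad}\,\widetilde{h}_i(t)\,(x_j^\pm(u)) = e^\lambda x_j^\pm(u)$ requires $[\widetilde{h}_i(t), x_j^\pm(u)]$ to act as scalar multiplication $\lambda\cdot x_j^\pm(u)$ modulo regular terms in $u$, which is precisely the shape of the claim. I would therefore arrange the induction so that this scalarness is a by-product of each inductive step. An elementary alternative that avoids logarithms of operators is to use the recursion $\widetilde{h}_{i,r+1} = h_{i,r+1} - \frac{\hbar}{r+2}\sum_{n=1}^{r+1} n\, \widetilde{h}_{i,n-1}\, h_{i,r-n+1}$ (obtained from $h_i(t) = \exp\widetilde{h}_i(t)$) together with the Leibniz rule, reducing the lemma to a binomial identity that admits verification through the same generating-function computation.
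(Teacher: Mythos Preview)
Your generating-function computation is exactly the argument in \cite{GT13} that the paper invokes, so at the level of the actual calculation you and the paper agree. The paper's own proof is shorter because it makes one observation you omit: since $\widetilde h_{i,r}$ is a polynomial in $h_{i,0},\dots,h_{i,r}$ alone, the identity \eqref{eq:thx} for $r\le v$ is a \emph{formal} consequence of the relations \eqref{eq:Cer}, \eqref{eq:SYrc2} restricted to those indices; hence one may freely assume the relations hold for all $r,s$ and quote \cite{GT13}. Your generating-function manipulation sums \eqref{eq:SYrc2} over all $r$, so as written it uses relations beyond level $v$; the paper's reduction is precisely what legitimizes this, and you should state it. Your recursive alternative does stay honestly within $r\le v$ and is the cleanest self-contained route.

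Two small corrections. First, in the degenerate case $i=j$, $c_{ii}=0$ you cannot extend \eqref{eq:SYrc5} to all $s$ via \eqref{eq:SYrc2}: relation \eqref{eq:SYrc2} is explicitly \emph{not} asserted for $i=j$ when $|i|=\bar 1$ and $c_{ii}=0$. The polynomial-dependence observation above again rescues this (one only needs $[h_{i,k},x_{i,s}^\pm]=0$ for $k\le r$, and this is a formal consequence of the assumed relations). Second, your ``main obstacle'' is not circular: the hypotheses already imply that each $\mathrm{ad}\,h_{i,k}$ (hence each $\mathrm{ad}\,\widetilde h_{i,k}$) preserves the span of $\{x_{j,s}^\pm\}_s$, so $\mathrm{ad}\,\widetilde h_i(t)$ acts there as a well-defined shift-type operator whose exponential is $\mathrm{Ad}\,h_i(t)$; taking $\log$ of the scalar conjugation formula is then legitimate without assuming the conclusion.
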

\begin{proof}
	Note that from \eqref{eq:hln} follows that for arbitrary $r \in \mathbb{N}_{0}$ we have $\widetilde{h}_{i,r} = f(h_{i,0}, h_{i,1} , ..., h_{i,r})$ for some element of free algebra $f \in \Bbbk \langle x_{1},...,x_{r+1} \rangle$. Hence, while deriving \eqref{eq:thx}, we may and shall assume that \eqref{eq:Cer} and \eqref{eq:SYrc2} hold for all $r,s \in \mathbb{N}_{0}$. Then the result follows from \cite[Lemma 2.7, Lemma 2.9, Remark 3.1]{GT13}.
\end{proof}

Set $\dbtilde{h}_{ij,0} = h_{i,0}$ ($i,j \in I$), and define inductively for $r \in \mathbb{N}$
\[ \dbtilde{h}_{ij,r} = \widetilde{h}_{i,r} - \sum_{p=1}^{\lfloor r/2 \rfloor} \binom{r}{2p} \frac{(\hbar (\alpha_{i},\alpha_{j}) / 2)^{2p}}{2p+1} \dbtilde{h}_{ij,r-2p}. \]
We have

\begin{lemma}
	\label{lm:hwdpr}
	Suppose that Lemma \ref{lm:thfdpr} holds. Then in the same notations
	\begin{equation}
		\label{eq:thxo}
		[ \dbtilde{h}_{ij,r} , x_{j,s}^{\pm} ] = \pm (\alpha_{i}, \alpha_{j})x_{j,r+s}^{\pm},
	\end{equation}
	for $i, j \in I$, $0 \le r \le v$, $s \in \mathbb{N}_{0}$, and
	\begin{equation}
		\label{eq:thxs}
		\dbtilde{h}_{ij,r} = h_{i,r} + f(h_{i,0}, h_{i,1}, ... , h_{i,r-1})
	\end{equation}
	for some polynomial $f(x_{1},x_{2},...,x_{r}) \in \Bbbk[x_{1},x_{2},...,x_{r}]$.
\end{lemma}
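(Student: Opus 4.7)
The approach is a clean double induction on $r$, one for each of the two claims, since $\dbtilde{h}_{ij,r}$ is defined by a recursion whose correction coefficients are tailored precisely to the correction coefficients appearing in Lemma \ref{lm:thfdpr}. No new structural input about $Y_{\hbar}(\mathfrak{g})$ is needed beyond that lemma and the generating-series definition \eqref{eq:hln}.

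For the commutation formula \eqref{eq:thxo}, the plan is induction on $r$. The base case $r=0$ follows from $\dbtilde{h}_{ij,0} = h_{i,0}$ and relation \eqref{eq:SYrc1} (using $c_{ij} = (\alpha_i,\alpha_j)$). For the inductive step, I apply $\mathrm{ad}$ with respect to $x_{j,s}^{\pm}$ to the recursive definition
\[ \dbtilde{h}_{ij,r} = \widetilde{h}_{i,r} - \sum_{p=1}^{\lfloor r/2 \rfloor} \binom{r}{2p} \frac{(\hbar(\alpha_{i},\alpha_{j})/2)^{2p}}{2p+1} \dbtilde{h}_{ij,r-2p}, \]
substitute Lemma \ref{lm:thfdpr} for $[\widetilde{h}_{i,r}, x_{j,s}^{\pm}]$, and substitute the induction hypothesis for each $[\dbtilde{h}_{ij,r-2p}, x_{j,s}^{\pm}]$. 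The resulting two sums (one from Lemma \ref{lm:thfdpr}, one from the recursion, each indexed by $p$) agree term-by-term and cancel, leaving only $\pm (\alpha_{i},\alpha_{j})\, x_{j,r+s}^{\pm}$. This cancellation is precisely why the coefficients in the definition of $\dbtilde{h}_{ij,r}$ were chosen the way they were.

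For the polynomial expression \eqref{eq:thxs}, the plan is again induction on $r$, with a preliminary observation about $\widetilde{h}_{i,r}$ itself. Writing $A(t) = \sum_{r \ge 0} h_{i,r} t^{-r-1}$, the defining identity \eqref{eq:hln} rearranges to
\[ \sum_{r \ge 0} \widetilde{h}_{i,r}\, t^{-r-1} = \hbar^{-1} \log\bigl(1 + \hbar A(t)\bigr) = A(t) + \sum_{n \ge 2} \frac{(-1)^{n+1} \hbar^{n-1}}{n} A(t)^{n}. \]
Extracting the coefficient of $t^{-r-1}$ shows that $\widetilde{h}_{i,r} = h_{i,r} + g(h_{i,0},\dots,h_{i,r-1})$ for some polynomial $g$, because the $A(t)^{n}$ terms ($n \ge 2$) only contribute monomials in $h_{i,0},\dots,h_{i,r-1}$ by degree counting. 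Combined with the induction hypothesis applied to each $\dbtilde{h}_{ij,r-2p}$ with $p \ge 1$ (noting $r-2p \le r-1$), the recursive formula expresses $\dbtilde{h}_{ij,r}$ as $h_{i,r}$ plus a polynomial in $h_{i,0},\dots,h_{i,r-1}$.

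I do not anticipate a genuine obstacle here: both parts are essentially bookkeeping. The only step worth double-checking is the term-by-term cancellation in the first induction, where one must confirm the indices, the binomial coefficients, and the factors of $(\hbar(\alpha_{i},\alpha_{j})/2)^{2p}/(2p+1)$ match up exactly between Lemma \ref{lm:thfdpr} and the defining recursion for $\dbtilde{h}_{ij,r}$. Once this is verified for generic $r$, the rest of the proof writes itself.
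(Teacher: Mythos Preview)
Your proposal is correct and matches the approach the paper has in mind: the paper's own proof simply points to \cite[Lemma 3.16]{MS22} and notes that it rests on Lemma~\ref{lm:thfdpr}, which is exactly the induction-on-$r$ argument you spell out. The term-by-term cancellation you flag does work verbatim since the recursion defining $\dbtilde{h}_{ij,r}$ uses the same coefficients $\binom{r}{2p}(\hbar c_{ij}/2)^{2p}/(2p+1)$ as the correction sum in \eqref{eq:thx}, and your degree-counting argument for \eqref{eq:thxs} via the expansion of $\hbar^{-1}\log(1+\hbar A(t))$ is precisely the content of the remark in the proof of Lemma~\ref{lm:thfdpr} that $\widetilde{h}_{i,r}$ is a polynomial in $h_{i,0},\dots,h_{i,r}$.
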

\begin{proof}
	The proof is the same as in \cite[Lemma 3.16]{MS22}. It is based on Lemma \ref{lm:thfdpr}.
\end{proof}

\begin{lemma}
	\label{lm:ijtrancart}
	Let $p, n, m \in I$ and $z \in \mathbb{N}_{0}$. Suppose that $[h_{pz} , h_{nv}] = 0$, $[h_{pz}, h_{mv}] = 0$ for $0 \le v \le s^{'}-1$ $(s^{'} \in \mathbb{N}_{0})$; $(\alpha_{p}, \alpha_{n}) \ne 0$ and $(\alpha_{p}, \alpha_{m}) \ne 0$. Moreover, let $[h_{nv_1} , h_{n v_2}] = 0$ and $[h_{mv_1} , h_{m v_2}] = 0$ for $0 \le v_1, v_2 \le s^{'}$ and \eqref{eq:SYrc2} hold for $(i,j)=(n,p)=(m,p)$, $0 \le r \le s^{'}$ and any $s \in \mathbb{N}_{0}$. Then $[h_{pz}, h_{ns^{'}}] = \frac{(\alpha_{n} , \alpha_{p})}{(\alpha_{m} , \alpha_{p})} [h_{pz}, h_{ms^{'}}]$.
\end{lemma}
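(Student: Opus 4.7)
My approach is to build, in the spirit of Lemmas \ref{lm:thfdpr} and \ref{lm:hwdpr}, normalized ``second-generation'' Cartan elements $\dbtilde{h}_{np,s'}$ and $\dbtilde{h}_{mp,s'}$ and examine the combination $Z := c_{mp}\dbtilde{h}_{np,s'} - c_{np}\dbtilde{h}_{mp,s'}$. The hypotheses are precisely what is needed for this construction to go through for $(i,j)=(n,p)$ and $(m,p)$: the intra-family commutativity $[h_{n,v_1},h_{n,v_2}] = [h_{m,v_1},h_{m,v_2}] = 0$ makes the log expansion \eqref{eq:hln} well-defined up to index $s'$, and the assumed \eqref{eq:SYrc2} for $(n,p)$ and $(m,p)$ yields the expansions
\[ [\widetilde{h}_{n,s'}, x_{p,s}^{\pm}] = \pm c_{np} x_{p,s+s'}^{\pm} + (\text{lower in } x_{p,\cdot}^{\pm}), \qquad [\widetilde{h}_{m,s'}, x_{p,s}^{\pm}] = \pm c_{mp} x_{p,s+s'}^{\pm} + (\text{lower}), \]
which after the $\dbtilde{h}$ normalization give the cleaner identities $[\dbtilde{h}_{np,s'}, x_{p,s}^{\pm}] = \pm c_{np} x_{p,s+s'}^{\pm}$ and $[\dbtilde{h}_{mp,s'}, x_{p,s}^{\pm}] = \pm c_{mp} x_{p,s+s'}^{\pm}$ for every $s \in \mathbb{N}_0$. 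Consequently $[Z, x_{p,s}^{\pm}] = 0$ for every $s \in \mathbb{N}_0$.

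I would then compute $[h_{pz}, Z]$ in two different ways. First, by \eqref{eq:thxs} we have $\dbtilde{h}_{np,s'} = h_{n,s'} + F(h_{n,0},\ldots,h_{n,s'-1})$ and similarly for $m$; the assumed commutations $[h_{pz}, h_{n,v}] = [h_{pz}, h_{m,v}] = 0$ for $v \le s'-1$ kill the polynomial tails, leaving
\[ [h_{pz}, Z] = c_{mp}[h_{pz}, h_{n,s'}] - c_{np}[h_{pz}, h_{m,s'}]. \]
Second, writing $h_{pz} = [x_{p,z}^{+}, x_{p,0}^{-}]$ (valid by \eqref{eq:mpy3} for $z \le 1$ and by \eqref{eq:rec2} for $z \ge 2$) and applying the super Jacobi identity yields
\[ [h_{pz}, Z] = [x_{p,z}^{+}, [x_{p,0}^{-}, Z]] - (-1)^{|p||p|}[x_{p,0}^{-}, [x_{p,z}^{+}, Z]] = 0, \]
because $Z$ commutes with every $x_{p,r}^{\pm}$. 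Dividing by $c_{mp} \ne 0$ then yields the claimed equality.

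The main obstacle I anticipate lies in the very first step: Lemma \ref{lm:thfdpr} is stated under the global hypothesis that \emph{all} $[h_{i,r}, h_{j,s}]$ vanish in the relevant range, which is strictly stronger than what the present lemma grants. One must therefore revisit the argument of Lemma \ref{lm:thfdpr} and verify that its $(i,j) = (n,p)$ specialization genuinely uses only commutativity within $\{h_{n,v}\}_{v \le s'}$ together with the $(n,p)$-instance of \eqref{eq:SYrc2} (and analogously for $m$); this local version is precisely what our hypotheses provide, so no new input is required, but the dependency on the other families must be audited and eliminated rather than invoked as a black box.
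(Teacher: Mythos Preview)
Your proposal is correct and matches the paper's approach: the paper's proof is simply a pointer to \cite[Lemma 3.17]{MS22} ``based on Lemmas \ref{lm:thfdpr} and \ref{lm:hwdpr}'', and your argument is precisely the natural use of those lemmas---form the normalized elements $\dbtilde{h}_{np,s'}$, $\dbtilde{h}_{mp,s'}$, take the combination that kills $x_{p,s}^{\pm}$, and compare its bracket with $h_{pz}=[x_{p,z}^{+},x_{p,0}^{-}]$ via the Jacobi identity against its expansion from \eqref{eq:thxs}. Your caveat about auditing the hypotheses of Lemma \ref{lm:thfdpr} is also on point: its proof explicitly notes that $\widetilde{h}_{i,r}$ is a polynomial in $h_{i,0},\ldots,h_{i,r}$ only, so the relevant specializations require exactly the intra-family commutativity and the $(n,p)$, $(m,p)$ instances of \eqref{eq:SYrc2} that the present lemma assumes.
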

\begin{proof}
	The proof is the same as in \cite[Lemma 3.17]{MS22}. It is based on Lemmas \ref{lm:thfdpr} and \ref{lm:hwdpr}.
\end{proof}

\vspace{10pt}

\subsection{Proof of minimalistic presentation theorem}

\label{sub:pmpt}

\vspace{10pt}

\begin{lemma}
	\label{lm:h0eq}
	The relation \eqref{eq:SYrc1} is satisfied for all $i,j \in I$ and $r \in \mathbb{N}_{0}$. Moreover, for the same parameters
	\begin{equation}
		\label{eq:h1eq}
		[\widetilde{h}_{i1} , x_{jr}^{\pm}] = \pm c_{ij} x_{j,r+1}^{\pm}.
	\end{equation}
\end{lemma}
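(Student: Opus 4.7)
The plan is to prove both claims simultaneously by induction on $r \in \mathbb{N}_{0}$, exploiting the recurrence \eqref{eq:rec1} in the form $x_{j,r+1}^{\pm} = \pm(c_{jk})^{-1}[\widetilde{h}_{k,1}, x_{j,r}^{\pm}]$, where $k = j$ if $c_{jj} \ne 0$ and $k = j+1$ otherwise. Let $P(r)$ abbreviate the statement $[h_{i,0}, x_{j,r}^{\pm}] = \pm c_{ij}\, x_{j,r}^{\pm}$ and $Q(r)$ the statement $[\widetilde{h}_{i,1}, x_{j,r}^{\pm}] = \pm c_{ij}\, x_{j,r+1}^{\pm}$, each quantified over all $i,j \in I$. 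The base case $P(0)$ is exactly MY2; $Q(0)$ is MY4 provided $i \ne j$ or $c_{ii} \ne 0$, and in the excluded case ($i=j$, $c_{ii}=0$) both sides vanish: the right-hand side is zero trivially, while $[\widetilde{h}_{i,1}, x_{i,0}^{\pm}] = [h_{i,1}, x_{i,0}^{\pm}] - \tfrac{\hbar}{2}[h_{i,0}^{2}, x_{i,0}^{\pm}]$ with the first bracket zero by MY6 and $[h_{i,0}^{2}, x_{i,0}^{\pm}] = [h_{i,0}, x_{i,0}^{\pm}]h_{i,0} + h_{i,0}[h_{i,0}, x_{i,0}^{\pm}] = \pm c_{ii}\{h_{i,0}, x_{i,0}^{\pm}\} = 0$ by a direct computation.

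For the induction step $P(r) \Rightarrow P(r+1)$, apply the super-Jacobi identity \eqref{eq:sje} (all signs trivialize since $h_{i,0}$ and $\widetilde{h}_{k,1}$ are even) to get
\begin{equation*}
[h_{i,0},[\widetilde{h}_{k,1}, x_{j,r}^{\pm}]] = [[h_{i,0}, \widetilde{h}_{k,1}], x_{j,r}^{\pm}] + [\widetilde{h}_{k,1},[h_{i,0}, x_{j,r}^{\pm}]].
\end{equation*}
The first summand vanishes because MY1 gives $[h_{i,0}, h_{k,1}] = 0 = [h_{i,0}, h_{k,0}]$, hence $h_{i,0}$ commutes with $h_{k,0}^{2}$ and thus with $\widetilde{h}_{k,1}$. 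The second summand equals $\pm c_{ij}\,[\widetilde{h}_{k,1}, x_{j,r}^{\pm}] = \pm c_{ij}(\pm c_{jk})\, x_{j,r+1}^{\pm}$ by $P(r)$ and the recurrence, and multiplication by $\pm(c_{jk})^{-1}$ yields $P(r+1)$.

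The induction step $Q(r) \Rightarrow Q(r+1)$ is entirely parallel: the same super-Jacobi expansion with $\widetilde{h}_{i,1}$ in place of $h_{i,0}$ reduces the claim to $[\widetilde{h}_{i,1}, \widetilde{h}_{k,1}] = 0$, which follows from MY1 once one expands $\widetilde{h}_{\bullet,1} = h_{\bullet,1} - \tfrac{\hbar}{2}h_{\bullet,0}^{2}$, since every resulting contribution is a (product of) bracket(s) of $h_{i,\alpha}$ with $h_{k,\beta}$ for $\alpha,\beta \in \{0,1\}$. Applying $Q(r)$ to the surviving term and then the recurrence once more produces $\pm c_{ij}\, x_{j,r+2}^{\pm}$, as required.

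The main obstacle is bookkeeping rather than depth: one must carefully distinguish the index $i$ of the commuting Cartan element from the index $k$ dictated by the recurrence on $x_{j,\bullet}^{\pm}$, verify commutativity of $\widetilde{h}_{k,1}$ with both $h_{i,0}$ and $\widetilde{h}_{i,1}$ from MY1 alone (the only nontrivial input being that $h_{i,0}$ commutes with $h_{k,0}^{2}$), and handle the single degenerate base case $i = j$, $c_{ii} = 0$ via MY6 together with the explicit $h_{i,0}^{2}$ computation.
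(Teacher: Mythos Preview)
Your proof is correct and follows essentially the same approach as the paper's (which defers to \cite[Lemma~3.1]{MS22}): induction on $r$ via the recursive definition \eqref{eq:rec1}, using the Jacobi identity together with the commutativity of $h_{i,0}$ and $\widetilde{h}_{i,1}$ with $\widetilde{h}_{k,1}$ (from MY1), and handling the degenerate base case $i=j$, $c_{ii}=0$ of $Q(0)$ via MY6. One small presentational remark: the recurrence \eqref{eq:rec1} is a \emph{definition} only for $r\ge 1$ (i.e.\ of $x_{j,r+1}^{\pm}$ with $r\ge 1$); for $r=0$ the identity $[\widetilde{h}_{k,1},x_{j,0}^{\pm}]=\pm c_{jk}\,x_{j,1}^{\pm}$ is precisely MY4 (applicable since $k$ is chosen with $c_{jk}\ne 0$), so you are implicitly invoking $Q(0)$ for $i=k$ in the step $P(0)\Rightarrow P(1)$---which you have already established.
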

\begin{proof}
	The proof is the same as in \cite[Lemma 3.1]{MS22}.
\end{proof}

\begin{lemma}
	\label{lm:l2}
	The relation \eqref{eq:SYrc3} holds when $i=j$, $0 \le r +s \le 2$.
\end{lemma}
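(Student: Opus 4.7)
The cases $r+s\le 1$ are immediate from MY3 (\eqref{eq:mpy3}), and the case $(r,s)=(2,0)$ is the very definition \eqref{eq:rec2} of $h_{i,2}$. The content of the lemma therefore reduces to the two identities
\[
[x_{i,1}^{+},x_{i,1}^{-}]=h_{i,2}\qquad\text{and}\qquad [x_{i,0}^{+},x_{i,2}^{-}]=h_{i,2}.
\]
I choose the auxiliary index $j$ as in \eqref{eq:rec1}, that is $j=i$ if $c_{ii}\ne 0$ and $j=i+1$ if $c_{ii}=0$; in either case $c_{ij}\ne 0$, and Lemma \ref{lm:h0eq} together with the recursion \eqref{eq:rec1} supplies
\[
[\widetilde h_{j,1},x_{i,0}^{\pm}]=\pm c_{ij}\,x_{i,1}^{\pm},\qquad [\widetilde h_{j,1},x_{i,1}^{\pm}]=\pm c_{ij}\,x_{i,2}^{\pm}.
\]

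For the $(1,1)$ case I substitute $x_{i,1}^{-}=-c_{ij}^{-1}[\widetilde h_{j,1},x_{i,0}^{-}]$ into $[x_{i,1}^{+},x_{i,1}^{-}]$ and apply the graded Jacobi identity \eqref{eq:sje} to the inner commutator. Since $\widetilde h_{j,1}$ is even every Koszul sign in the expansion collapses to $+1$, yielding
\[
[x_{i,1}^{+},x_{i,1}^{-}]=[x_{i,2}^{+},x_{i,0}^{-}]-c_{ij}^{-1}[\widetilde h_{j,1},[x_{i,1}^{+},x_{i,0}^{-}]]=h_{i,2}-c_{ij}^{-1}[\widetilde h_{j,1},h_{i,1}]
\]
after using \eqref{eq:rec2} on the first piece and MY3 on the second. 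The residual bracket vanishes by MY1: expanding $\widetilde h_{j,1}=h_{j,1}-\tfrac{\hbar}{2}h_{j,0}^{2}$ reduces $[\widetilde h_{j,1},h_{i,1}]$ to commutators among the Cartan generators $\{h_{i,0},h_{i,1},h_{j,0},h_{j,1}\}$, each of which is zero by \eqref{eq:mpy1}.

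The $(0,2)$ case is handled by the mirror manoeuvre: writing $x_{i,2}^{-}=-c_{ij}^{-1}[\widetilde h_{j,1},x_{i,1}^{-}]$ and applying graded Jacobi to $[x_{i,0}^{+},[\widetilde h_{j,1},x_{i,1}^{-}]]$ produces the identity $[x_{i,0}^{+},x_{i,2}^{-}]=[x_{i,1}^{+},x_{i,1}^{-}]-c_{ij}^{-1}[\widetilde h_{j,1},h_{i,1}]$, which collapses to $h_{i,2}$ by combining the $(1,1)$ case just proved with the vanishing of $[\widetilde h_{j,1},h_{i,1}]$. The only plausible obstacle is the bookkeeping of super signs when $|i|=\bar 1$, but because $\widetilde h_{j,1}$ is always even every sign produced by \eqref{eq:sje} equals $+1$; consequently the calculation proceeds uniformly over all Dynkin types (isotropic or non-isotropic) and requires none of the Serre-type relations MY7--MY10.
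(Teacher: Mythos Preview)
Your proof is correct and follows essentially the same approach as the paper's: both arguments use MY1 to obtain $[\widetilde h_{j,1},h_{i,1}]=0$, MY3 for the base cases, Lemma~\ref{lm:h0eq} for the action of $\widetilde h_{j,1}$ on the $x$-generators, and a single application of the graded Jacobi identity to link $[x_{i,2}^{+},x_{i,0}^{-}]$, $[x_{i,1}^{+},x_{i,1}^{-}]$, and $[x_{i,0}^{+},x_{i,2}^{-}]$. The only cosmetic difference is that the paper starts from $0=[h_{i,1},\widetilde h_{j,1}]=[[x_{i,1}^{+},x_{i,0}^{-}],\widetilde h_{j,1}]$ and expands, whereas you substitute the recursive expression for $x_{i,1}^{-}$ (resp.\ $x_{i,2}^{-}$) directly into the commutator; after one Jacobi step the two computations become literally identical.
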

\begin{proof}
	From \eqref{eq:mpy1}, \eqref{eq:mpy3} and Lemma \ref{lm:h0eq} for all $i \in I$ ($j \in I$ such that $c_{ij} \ne 0$), we have
	\[ 0 = [ h_{i1}, \widetilde{h}_{j1} ] = [ [x_{i1}^{+}, x_{i0}^{-}] , \widetilde{h}_{j1} ] = c_{ij} ( [ x_{i1}^{+}, x_{i1}^{-} ] - [ x_{i2}^{+} , x_{i0}^{-} ] ). \]
	On the other hand,
	\[ 0 = [ h_{i1}, \widetilde{h}_{j1} ] = [ [x_{i0}^{+}, x_{i1}^{-}] , \widetilde{h}_{j1} ] = c_{ij} ( [ x_{i0}^{+}, x_{i2}^{-} ] - [ x_{i1}^{+} , x_{i1}^{-} ] ). \]
	Therefore
	\[ [ x_{i0}^{+}, x_{i2}^{-} ] = [ x_{i1}^{+}, x_{i1}^{-} ] = [ x_{i2}^{+} , x_{i0}^{-} ] = h_{i2}. \]
\end{proof}

\begin{lemma}
	\label{lm:hx20}
	The relation \eqref{eq:SYrc2} holds when $i=j$, $(r,s) =(1,0)$, and $|i| = \bar{0}$, i.e.
	\[ [ h_{i2} , x_{i0}^{\pm} ] - [h_{i1} , x_{i1}^{\pm}] = \pm \frac{c_{ii} \hbar}{2} \{h_{i1}, x_{i0}^{\pm}\}, \]
\end{lemma}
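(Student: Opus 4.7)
The statement is for $|i|=\bar0$, so $c_{ii}\neq 0$, and every element involved ($h_{i,r}$ and $x_{i,r}^{\pm}$) is even. This means the super Jacobi identity \eqref{eq:sje} and all the super-brackets collapse to ordinary commutators with no sign complications, and this is the main simplification I want to exploit. My plan is to represent $h_{i2}$ via one of the equivalent forms from Lemma \ref{lm:l2}, apply the Jacobi identity, and reduce every resulting inner bracket to something already covered by the minimalistic relations \textbf{MY3} and \textbf{MY5}.

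Concretely, I first derive the quadratic identity $[x_{i1}^{\pm}, x_{i0}^{\pm}] = \pm \tfrac{c_{ii}\hbar}{2}(x_{i0}^{\pm})^2$. This is immediate from \textbf{MY5} with $i=j$: in the even case the two brackets on the left combine via antisymmetry into $2[x_{i1}^{\pm},x_{i0}^{\pm}]$, while the anticommutator $\{x_{i0}^{\pm},x_{i0}^{\pm}\}$ equals $2(x_{i0}^{\pm})^{2}$.

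Next I take $h_{i2}=[x_{i1}^{+},x_{i1}^{-}]$ (Lemma \ref{lm:l2}) and expand using Jacobi:
\[
[h_{i2}, x_{i0}^{\pm}] = [x_{i1}^{+}, [x_{i1}^{-}, x_{i0}^{\pm}]] - [x_{i1}^{-}, [x_{i1}^{+}, x_{i0}^{\pm}]].
\]
In the inner brackets one of the terms is a mixed $+/-$ bracket, which by \textbf{MY3} produces $\pm h_{i1}$, and the other is a same-sign bracket, which by the quadratic identity above produces $\pm\tfrac{c_{ii}\hbar}{2}(x_{i0}^{\pm})^{2}$. I then apply the Leibniz rule to the remaining commutator $[x_{i1}^{\mp},(x_{i0}^{\pm})^{2}]$, which using \textbf{MY3} again collapses to $\pm\{h_{i1},x_{i0}^{\pm}\}$. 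Collecting the terms and using $[x_{i1}^{\mp},h_{i1}] = -[h_{i1},x_{i1}^{\mp}]$, the two outer contributions rearrange exactly to
\[
[h_{i2},x_{i0}^{\pm}] \;=\; [h_{i1},x_{i1}^{\pm}] \;\pm\; \frac{c_{ii}\hbar}{2}\{h_{i1},x_{i0}^{\pm}\},
\]
which is the desired identity.

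There is no conceptual obstacle here: since the relevant generators are all even, no super-sign conventions create interference, and the only technical care needed is to keep track of the signs coming from $[x_{i1}^{-},x_{i0}^{+}] = -h_{i1}$ versus $[x_{i1}^{+},x_{i0}^{-}] = +h_{i1}$ when treating the $+$ and $-$ cases in parallel. The main subtlety, to the extent there is one, is simply ensuring that Lemma \ref{lm:l2} is genuinely available at this point in the argument — but that lemma is proved just above from \textbf{MY1}, \textbf{MY3}, and Lemma \ref{lm:h0eq}, none of which depend on the relation being established here.
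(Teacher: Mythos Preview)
Your proof is correct and follows essentially the same route as the paper, which simply cites \cite[Lemma~2.26]{GNW18}: write $h_{i2}=[x_{i1}^{+},x_{i1}^{-}]$ via Lemma~\ref{lm:l2}, apply the (ordinary, since $|i|=\bar0$) Jacobi identity, and reduce the inner brackets using \textbf{MY3} together with the consequence $[x_{i1}^{\pm},x_{i0}^{\pm}]=\pm\tfrac{c_{ii}\hbar}{2}(x_{i0}^{\pm})^{2}$ of \textbf{MY5}. Your observation that Lemma~\ref{lm:l2} is proved independently of the present relation is exactly the consistency check needed, so there is no gap.
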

\begin{proof}
	The proof is analogous to that in \cite[Lemma 2.26]{GNW18}.
\end{proof}

\begin{lemma}
	\label{lm:SYrc34}
	Let $i,j \in I$ and $i \ne j$. The relations \eqref{eq:SYrc3}, \eqref{eq:SYrc4}, and \eqref{eq:bssr} hold for any $r,s \in \mathbb{N}_{0}$.
\end{lemma}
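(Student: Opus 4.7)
\textbf{Proof proposal for Lemma \ref{lm:SYrc34}.}

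The plan is to prove \eqref{eq:SYrc3}, \eqref{eq:SYrc4}, and \eqref{eq:bssr} for $i \neq j$ jointly by strong induction on the total degree $N = r+s$, using Lemma \ref{lm:h0eq} and the graded Jacobi identity. The strategy follows the template of \cite{GNW18} and its super-analogue in \cite{MS22}.

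For the base case $N \leq 1$, relation \eqref{eq:SYrc3} is immediate from MY3 (the Kronecker $\delta_{ij}$ vanishes since $i \neq j$), relation \eqref{eq:SYrc4} at $r=s=0$ is MY5, and \eqref{eq:bssr} at $r=s=0$ is MY7.

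For the inductive step, the central identity is obtained by expanding $[\widetilde{h}_{k,1},[x_{i,r}^{+},x_{j,s}^{-}]]$ via the graded Jacobi identity -- no signs appear, since $\widetilde{h}_{k,1}$ is even -- and using Lemma \ref{lm:h0eq}:
\begin{equation*}
[\widetilde{h}_{k,1},[x_{i,r}^{+},x_{j,s}^{-}]] \;=\; c_{ki}\,[x_{i,r+1}^{+},x_{j,s}^{-}] \;-\; c_{kj}\,[x_{i,r}^{+},x_{j,s+1}^{-}],
\end{equation*}
with analogous formulas for the $\pm\pm$ brackets. Under the inductive hypothesis at level $N$ the left-hand side is known, so each choice of $k$ yields one linear relation between two level-$(N+1)$ brackets. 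Two distinct choices $k_{1},k_{2}$ whose coefficient matrix $\begin{psmallmatrix} c_{k_{1}i} & c_{k_{1}j} \\ c_{k_{2}i} & c_{k_{2}j} \end{psmallmatrix}$ is non-singular then furnish an invertible $2\times 2$ system, forcing each level-$(N+1)$ bracket either to vanish (for \eqref{eq:SYrc3}) or to equal the prescribed anticommutator right-hand side (for \eqref{eq:SYrc4}, where that right-hand side is itself propagated inductively using \eqref{eq:h1eq} and the derivation property $[\widetilde{h}_{k,1},\{x,y\}]=\{[\widetilde{h}_{k,1},x],y\}+\{x,[\widetilde{h}_{k,1},y]\}$). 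Sweeping $(r,s)$ across all pairs with $r+s=N$ covers every bracket at level $N+1$, closing the induction. Relation \eqref{eq:bssr} at general $(r,s)$ is the specialization of \eqref{eq:SYrc4} whose anticommutator right-hand side vanishes.

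The main obstacle is that $[\widetilde{h}_{k,1},-]$ is strictly level-raising: it sends a level-$N$ bracket to a combination of level-$(N+1)$ brackets, never lower. A single Jacobi expansion therefore cannot close the induction on its own, and a second independent equation is indispensable -- hence the essential role of the non-degeneracy of the $2\times 2$ Cartan-entry matrix. The natural choice $k_{1}=i$, $k_{2}=j$ gives determinant $c_{ii}c_{jj}-c_{ij}^{2}$, which by inspection of the Cartan matrices of $B(m,n)$ (Subsection \ref{subsc:bs}) is nonzero whenever $c_{ij}\neq 0$. When $c_{ij}=0$ one instead picks a single $k$ with $c_{ki}\neq 0$ and $c_{kj}=0$ -- such $k$ exists by the explicit structure of the Dynkin diagram (take $k=i$ if $c_{ii}\neq 0$, otherwise a non-neighbour-of-$j$ adjacent to $i$) -- whereupon the identity collapses to $c_{ki}\,[x_{i,r+1}^{+},x_{j,s}^{-}]=0$, giving \eqref{eq:bssr} and the corresponding case of \eqref{eq:SYrc3} directly.
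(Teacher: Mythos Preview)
Your overall strategy---induction on $r+s$ via the graded Jacobi identity applied to $[\widetilde{h}_{k,1},\cdot]$, reducing the step to a $2\times 2$ linear system in Cartan entries---is exactly the paper's approach. For the adjacent case $c_{ij}\neq 0$ your choice $(k_{1},k_{2})=(i,j)$ with determinant $c_{ii}c_{jj}-c_{ij}^{2}$ is fine, and indeed coincides with the paper's cases 1.1--1.3.

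The gap is in your handling of the case $c_{ij}=0$. You claim one can always find a single $k$ with $c_{ki}\neq 0$ and $c_{kj}=0$ (namely $k=i$ if $c_{ii}\neq 0$, else a neighbour of $i$ that is not a neighbour of $j$). This fails when $i$ is an isotropic \emph{endpoint} of the Dynkin diagram and $j$ is at distance~$2$: then $i$ has a unique neighbour $i'$, and that neighbour is adjacent to $j$. Concretely, take $B(1,2)$ with the Borel for which $\alpha_{1}=\delta_{1}-\epsilon_{1}$, $\alpha_{2}=\epsilon_{1}-\delta_{2}$, $\alpha_{3}=\delta_{2}$; then $c_{11}=0$, $c_{13}=0$, and the only $k$ with $c_{k1}\neq 0$ is $k=2$, but $c_{23}\neq 0$. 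Your collapse to $c_{ki}[x^{+}_{i,r+1},x^{-}_{j,s}]=0$ therefore does not go through, and as stated you can neither raise $r$ nor, by a single symmetric choice, reach all pairs $(r,s)$.

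The paper closes this case by \emph{not} abandoning the $2\times 2$ system when $c_{ij}=0$: it performs a short case analysis on the Dynkin subdiagram around $i,j$ (cases 2.1--2.5 and 3.1--3.3) and picks $m,n\in I$---typically $m=i'$ a neighbour of $i$, $n=j$ or a neighbour of $j$---so that $\det\begin{psmallmatrix}c_{mi}&c_{mj}\\ c_{ni}&c_{nj}\end{psmallmatrix}\neq 0$. In the example above, $m=2$, $n=3$ gives determinant $c_{21}c_{33}\neq 0$. You should either reinstate this $2\times 2$ argument for $c_{ij}=0$, or replace your single-$k$ paragraph by a two-stage argument: first use some $k'$ with $c_{k'j}\neq 0$ to establish all $X_{0,s}=0$, and then use any $k$ with $c_{ki}\neq 0$ (no longer requiring $c_{kj}=0$, since the term $c_{kj}X_{r,s+1}$ is now known to vanish) to raise $r$.
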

\begin{proof}
	We prove \eqref{eq:SYrc4} by induction on $r$ and $s$. The initial case $r=s=0$ is our assumption \eqref{eq:mpy5}. Let $X^{\pm}(r,s)$ be the result of substracting the right-hand side of \eqref{eq:SYrc4} from the left-hand side. Suppose that $X^{\pm}(r,s)=0$. Note that if we apply $[\widetilde{h}_{m1} , \cdot]$ and $[\widetilde{h}_{n1} , \cdot]$ ($m,n \in I$) to $X^{\pm}(r,s)=0$ we get from \eqref{eq:h1eq}
	\[ 0 = (\alpha_{i}, \alpha_{m}) X^{\pm}(r+1,s) + (\alpha_{j}, \alpha_{m}) X^{\pm}(r,s+1), \]
	\[ 0 = (\alpha_{i}, \alpha_{n}) X^{\pm}(r+1,s) + (\alpha_{j}, \alpha_{n}) X^{\pm}(r,s+1). \]
	Consider the matrix $A = \bigl( \begin{smallmatrix}(\alpha_{i},\alpha_{m}) & (\alpha_{j},\alpha_{m})\\ (\alpha_{i},\alpha_{n}) & (\alpha_{j},\alpha_{n})\end{smallmatrix}\bigr)$. When the determinant of $A$ is non-zero, we have $X^{\pm}(r+1,s)=X^{\pm}(r,s+1)=0$.
	
	In order to determine when the determinant of the matrix $A$ is non-zero depending on the grading of roots it is sufficient to consider the following Dynkin (sub)diagrams.
	\begin{enumerate}
		\item Select in \hyperref[fig:dynkin-diagrams-01]{case 1.1}, \hyperref[fig:dynkin-diagrams-02]{case 1.2} and \hyperref[fig:dynkin-diagrams-03]{case 1.3} $m=i$, $n=j$ in order to get the nonzero determinant.
		\begin{center}
			\begin{minipage}{0.3\textwidth}
				\centering	
				\begin{tikzpicture}
					\tikzset{/Dynkin diagram, root radius=.17cm}
					\dynkin[text style/.style={scale=1.0},labels*={i,j},edge length=1.7cm]A{xx}
				\end{tikzpicture}
				\captionsetup{type=figure}
				\captionof{figure}{case 1.1}
				\label{fig:dynkin-diagrams-01}
			\end{minipage}%
			\begin{minipage}{0.3\textwidth}
				\centering
				\begin{tikzpicture}
					\tikzset{/Dynkin diagram, root radius=.17cm}
					\dynkin[text style/.style={scale=1.0},labels*={i,j},edge length=1.7cm]B{xo}
				\end{tikzpicture}
				\captionsetup{type=figure}
				\captionof{figure}{case 1.2}
				\label{fig:dynkin-diagrams-02}
			\end{minipage}%
			
			\vspace{2em}
			
			\begin{minipage}{0.3\textwidth}
				\centering
				\begin{tikzpicture}
					\tikzset{/Dynkin diagram, root radius=.17cm}
					\dynkin[text style/.style={scale=1.0},labels*={i,j},edge length=1.7cm]B{x*}
				\end{tikzpicture}
				\captionsetup{type=figure}
				\captionof{figure}{case 1.3}
				\label{fig:dynkin-diagrams-03}
			\end{minipage}%
			\vspace{2em}
		\end{center}
		
		\item Select in \hyperref[fig:dynkin-diagrams-1]{case 2.1}, \hyperref[fig:dynkin-diagrams-2]{case 2.2} and \hyperref[fig:dynkin-diagrams-3]{case 2.3}
		
		$|i|=|j|=\bar{0} \Rightarrow m=i, n=j$; \\
		$|i|=\bar{1}, |j|=\bar{0} \Rightarrow m=i^{'}, n=j$; \\
		$|i|=\bar{0}, |j|=\bar{1} \Rightarrow m=i, n=i^{'}$.
		
			\begin{center}
			\begin{minipage}{0.3\textwidth}
				\centering	
				\begin{tikzpicture}
					\tikzset{/Dynkin diagram, root radius=.17cm}
					\dynkin[text style/.style={scale=1.0},labels*={i,i^{'},j},edge length=1.7cm]A{xxx}
				\end{tikzpicture}
				\captionsetup{type=figure}
				\captionof{figure}{case 2.1}
				\label{fig:dynkin-diagrams-1}
			\end{minipage}%
			\begin{minipage}{0.3\textwidth}
				\centering
				\begin{tikzpicture}
					\tikzset{/Dynkin diagram, root radius=.17cm}
					\dynkin[text style/.style={scale=1.0},labels*={i,i^{'},j},edge length=1.7cm]B{xxo}
				\end{tikzpicture}
				\captionsetup{type=figure}
				\captionof{figure}{case 2.2}
				\label{fig:dynkin-diagrams-2}
			\end{minipage}%
			
			\vspace{2em}
			
			\begin{minipage}{0.3\textwidth}
				\centering
				\begin{tikzpicture}
					\tikzset{/Dynkin diagram, root radius=.17cm}
					\dynkin[text style/.style={scale=1.0},labels*={i,i^{'},j},edge length=1.7cm]B{xx*}
				\end{tikzpicture}
				\captionsetup{type=figure}
				\captionof{figure}{case 2.3}
				\label{fig:dynkin-diagrams-3}
			\end{minipage}%
		\end{center}
		
		For \hyperref[fig:dynkin-diagrams-3]{case 2.3} when $|i|=|j|=\bar{1} \Rightarrow m=i^{'}, n=j$.
		Note that \hyperref[fig:dynkin-diagrams-1]{case 2.1} can occur only when $|I| > 3$. Thus in \hyperref[fig:dynkin-diagrams-4]{case 2.4} or \hyperref[fig:dynkin-diagrams-5]{case 2.5} when $|i|=|j|=\bar{1} \Rightarrow m=i^{'}$, $n=j^{'}$.
		
		\begin{center}
			\begin{minipage}{0.4\textwidth}
				\centering	
				\begin{tikzpicture}
					\tikzset{/Dynkin diagram, root radius=.17cm}
					\dynkin[text style/.style={scale=1.0},labels*={i,i^{'},j,j^{'}},edge length=1.7cm]A{txtx}
				\end{tikzpicture}
				\captionsetup{type=figure}
				\captionof{figure}{case 2.4}
				\label{fig:dynkin-diagrams-4}
			\end{minipage}%
		
			\vspace{2em}
			
			\begin{minipage}{0.4\textwidth}
				\centering
				\begin{tikzpicture}
					\tikzset{/Dynkin diagram, root radius=.17cm}
					\dynkin[text style/.style={scale=1.0},labels*={i^{'},i,j^{'},j},edge length=1.7cm]A{xtxt}
				\end{tikzpicture}
				\captionsetup{type=figure}
				\captionof{figure}{case 2.5}
				\label{fig:dynkin-diagrams-5}
			\end{minipage}%
		\end{center}
			
		\vspace{2em}
		
		\item Select in \hyperref[fig:dynkin-diagrams-6]{case 3.1}, \hyperref[fig:dynkin-diagrams-1]{case 3.2} and \hyperref[fig:dynkin-diagrams-8]{case 3.3}
		
		$|i|=|j|=\bar{0} \Rightarrow m=i, n=j$;
		
		$|i|=\bar{1}, |j|=\bar{0} \Rightarrow m=i^{'}, n=j$;
		
		$|i|=\bar{0}, |j|=\bar{1} \Rightarrow m=i, n=j^{'}$;
		
		$|i|=|j|=\bar{1} \Rightarrow m=i^{'}, n=j^{'}$.
		
		\begin{center}
			\begin{minipage}{0.4\textwidth}
				\centering	
				\begin{tikzpicture}
					\tikzset{/Dynkin diagram, root radius=.17cm}
					\dynkin[text style/.style={scale=1.0},labels*={i,i^{'},j^{'},j},edge length=1.7cm]A{xxxx}
				\end{tikzpicture}
				\captionsetup{type=figure}
				\captionof{figure}{case 3.1}
				\label{fig:dynkin-diagrams-6}
			\end{minipage}%
			
			\vspace{2em}
			
			\begin{minipage}{0.4\textwidth}
				\centering	
				\begin{tikzpicture}
					\tikzset{/Dynkin diagram, root radius=.17cm}
					\dynkin[text style/.style={scale=1.0},labels*={i,i^{'},j^{'},j},edge length=1.7cm]B{xxxo}
				\end{tikzpicture}
				\captionsetup{type=figure}
				\captionof{figure}{case 3.2}
				\label{fig:dynkin-diagrams-7}
			\end{minipage}%
			
			\vspace{2em}
			
			\begin{minipage}{0.4\textwidth}
				\centering	
				\begin{tikzpicture}
					\tikzset{/Dynkin diagram, root radius=.17cm}
					\dynkin[text style/.style={scale=1.0},labels*={i,i^{'},j^{'},j},edge length=1.7cm]B{xxx*}
				\end{tikzpicture}
				\captionsetup{type=figure}
				\captionof{figure}{case 3.3}
				\label{fig:dynkin-diagrams-8}
			\end{minipage}%
			
			\vspace{2em}
		\end{center}
		
	\end{enumerate}
	The result follows by induction hypothesis.
	
	We prove \eqref{eq:SYrc3} by induction on $r$ and $s$. The initial case $r=s=0$ is our assumption \eqref{eq:mpy3}. Let $X^{\pm}(r,s)$ be the result of substracting the right-hand side of \eqref{eq:SYrc3} from the left-hand side. Suppose that $X^{\pm}(r,s)=0$. Note that if we apply $[\widetilde{h}_{m1} , \cdot]$ and $[\widetilde{h}_{n1} , \cdot]$ ($m,n \in I$) to $X^{\pm}(r,s)=0$ we get from \eqref{eq:h1eq} that
	\[ 0 = (\alpha_{i}, \alpha_{m}) X^{\pm}(r+1,s) + (-1) (\alpha_{j}, \alpha_{m}) X^{\pm}(r,s+1), \]
	\[ 0 = (\alpha_{i}, \alpha_{n}) X^{\pm}(r+1,s) + (-1) (\alpha_{j}, \alpha_{n}) X^{\pm}(r,s+1). \]
	Consider the matrix $A = \bigl( \begin{smallmatrix}(\alpha_{i},\alpha_{m}) & -(\alpha_{j},\alpha_{m})\\ (\alpha_{i},\alpha_{n}) & -(\alpha_{j},\alpha_{n})\end{smallmatrix}\bigr)$. When the determinant of $A$ is non-zero, we have $X^{\pm}(r+1,s)=X^{\pm}(r,s+1)=0$. We apply the same arguments as above to deduce that it is always possible to select $m$ and $n$ in such a way that $\det(A) \ne 0$. The result follows by induction hypothesis.
	
	We prove \eqref{eq:bssr} by induction on $r$ and $s$. Denote the left hand side of \eqref{eq:bssr} by $X^{\pm}(r,s)$. The initial case $r=s=0$ is our assumption \eqref{eq:mpy7}. Suppose that $X^{\pm}(r,s)=0$. We apply $[\widetilde{h}_{m1} , \cdot]$ and $[\widetilde{h}_{n1} , \cdot]$ to $X^{\pm}(r,s) = 0$ to get 
	\[ 0 = (\alpha_{i}, \alpha_{m}) X^{\pm}(r+1,s) + (\alpha_{j}, \alpha_{m}) X^{\pm}(r,s+1), \]
	\[ 0 = (\alpha_{i}, \alpha_{n}) X^{\pm}(r+1,s) + (\alpha_{j}, \alpha_{n}) X^{\pm}(r,s+1). \]
	Consider the matrix $A = \bigl( \begin{smallmatrix}(\alpha_{i},\alpha_{m}) & (\alpha_{j},\alpha_{m})\\ (\alpha_{i},\alpha_{n}) & (\alpha_{j},\alpha_{n})\end{smallmatrix}\bigr)$. When the determinant of $A$ is non-zero, we have $X^{\pm}(r+1,s)=X^{\pm}(r,s+1)=0$. We apply the same arguments as above to deduce that it is always possible to select $m$ and $n$ in such a way that $\det(A) \ne 0$. The result follows by induction hypothesis.
\end{proof}

\begin{lemma}
	\label{lm:hxinej}
	Suppose that $i,j \in I$ and $i \ne j$. The equation \eqref{eq:SYrc2} holds for any $r,s \in \mathbb{N}_{0}$.
\end{lemma}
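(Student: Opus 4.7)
Set
\[
X^{\pm}(r,s) := [h_{i,r+1}, x_{j,s}^{\pm}] - [h_{i,r}, x_{j,s+1}^{\pm}] \mp \tfrac{c_{ij}\hbar}{2}\{h_{i,r}, x_{j,s}^{\pm}\},
\]
so the goal is $X^{\pm}(r,s) = 0$ for $i \neq j$ and all $r,s \in \mathbb{N}_{0}$. The key observation is that $h_{i,k} = [x_{i,k}^{+}, x_{i,0}^{-}]$ holds for every $k \ge 0$ (for $k \in \{0,1\}$ by Lemma \ref{lm:l2}, and for $k \ge 2$ by the defining recursion \eqref{eq:rec2}), so that both terms of $X^{\pm}$ admit a super-Jacobi expansion in $x$-generators where \eqref{eq:SYrc3}, \eqref{eq:SYrc4} and \eqref{eq:bssr} (all from Lemma \ref{lm:SYrc34}) are available.

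For the $+$ case, substituting the decomposition and expanding via super Jacobi uses $[x_{i,0}^{-}, x_{j,k}^{+}] = 0$ (for $i \neq j$, by \eqref{eq:SYrc3}) to reduce each commutator to
\[
[h_{i,k}, x_{j,l}^{+}] = -(-1)^{|i|}[x_{i,0}^{-}, [x_{i,k}^{+}, x_{j,l}^{+}]].
\]
Taking the difference and invoking \eqref{eq:SYrc4} on the inner bracket rewrites $[x_{i,r+1}^{+}, x_{j,s}^{+}] - [x_{i,r}^{+}, x_{j,s+1}^{+}]$ as $\tfrac{c_{ij}\hbar}{2}\{x_{i,r}^{+}, x_{j,s}^{+}\}$; the super-Leibniz rule combined with $[x_{i,0}^{-}, x_{i,r}^{+}] = -(-1)^{|i|}h_{i,r}$ and $[x_{i,0}^{-}, x_{j,s}^{+}] = 0$ collapses everything to $\tfrac{c_{ij}\hbar}{2}\{h_{i,r}, x_{j,s}^{+}\}$, which is exactly $X^{+}(r,s) = 0$. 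Note that this argument is uniform in $(r,s)$ and requires no induction on $r$.

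The $-$ case is the main obstacle. The analogous expansion yields $[h_{i,k}, x_{j,l}^{-}] = [x_{i,k}^{+}, [x_{i,0}^{-}, x_{j,l}^{-}]]$, but $[x_{i,0}^{-}, x_{j,l}^{-}]$ no longer vanishes. The plan is to process this inner bracket using the $(-,-)$ form of \eqref{eq:SYrc4}, namely $[x_{i,0}^{-}, x_{j,s+1}^{-}] = [x_{i,1}^{-}, x_{j,s}^{-}] + \tfrac{c_{ij}\hbar}{2}\{x_{i,0}^{-}, x_{j,s}^{-}\}$, and then apply super Jacobi to $[x_{i,r}^{+}, [x_{i,1}^{-}, x_{j,s}^{-}]]$. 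After bookkeeping, the whole identity reduces to the auxiliary claim $[x_{i,r}^{+}, x_{i,1}^{-}] = h_{i,r+1}$, i.e.\ the $i=j$ instance of \eqref{eq:SYrc3} at levels $(r,1)$. For $r \le 1$ this is already Lemma \ref{lm:l2}, but for $r \ge 2$ it is a nontrivial extension of that lemma which must be established separately (this is precisely the content of the lemma cited in item~3 of the proof outline of Theorem \ref{th:mpy}). Once that auxiliary decomposition is in hand, the $-$ case concludes exactly as the $+$ case and yields $X^{-}(r,s) = 0$.
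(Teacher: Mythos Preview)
Your treatment of the $+$ case is correct and essentially the standard route: write $h_{i,k}=[x_{i,k}^{+},x_{i,0}^{-}]$ (valid for all $k$ by Lemma~\ref{lm:l2} and \eqref{eq:rec2}), kill the cross term via \eqref{eq:SYrc3} for $i\neq j$, and feed \eqref{eq:SYrc4} into the remaining inner bracket. This is presumably exactly what the reference \cite[Lemma~3.13]{MS22} (to which the paper defers) does as well.

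The $-$ case, however, has a genuine circularity. Your reduction to the auxiliary identity $[x_{i,r}^{+},x_{i,1}^{-}]=h_{i,r+1}$ is correct, but for $r\ge 2$ this is exactly the $i=j$ instance of \eqref{eq:SYrc3} at level $r+1\ge 3$, and in the logical order of the paper that is only supplied by Lemma~\ref{lm:cartelrelii}. The proof of Lemma~\ref{lm:cartelrelii} \emph{uses} the present Lemma~\ref{lm:hxinej} (at the step corresponding to equation~(2.29) in \cite{L93}), so invoking it here is circular. The forward reference you make to ``item~3 of the proof outline of Theorem~\ref{th:mpy}'' does not escape this: item~3 cites precisely Lemma~\ref{lm:cartelrelii}.

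One clean way to close the gap, without touching the $i=j$ theory at high level, is to deduce the $-$ case from the already-established $+$ case via the \emph{ungraded} algebra anti-automorphism $\omega$ determined by $\omega(h_{i,r})=h_{i,r}$, $\omega(x_{i,r}^{\pm})=x_{i,r}^{\mp}$, $\omega(ab)=\omega(b)\omega(a)$. A direct check on \eqref{eq:mpy1}--\eqref{eq:mpy9} shows $\omega$ is well defined on the minimalistic presentation, it satisfies $\omega([a,b])=-(-1)^{|a||b|}[\omega(a),\omega(b)]$ and $\omega(\{a,b\})=(-1)^{|a||b|}\{\omega(a),\omega(b)\}$, and one computes $\omega(X^{+}(r,s))=-X^{-}(r,s)$. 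Alternatively, you could try to run the mirror of your $+$ argument using the decomposition $h_{i,k}=[x_{i,0}^{+},x_{i,k}^{-}]$, but note that this equality is \emph{not} the definition \eqref{eq:rec2} and is likewise unavailable for $k\ge 3$ at this stage, so it leads to the same circularity.
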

\begin{proof}
	The proof is analogous to that in \cite[Lemma 3.13]{MS22}.
\end{proof}

\begin{lemma}
	\label{lm:cel02}
	For all $i,j \in I$ and $s \in \mathbb{N}_{0}$
	\[ [h_{i0}, h_{js}] = 0. \]
\end{lemma}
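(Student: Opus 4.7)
The plan is to split on the value of $s$. The cases $s \in \{0,1\}$ are exactly the content of relation \textbf{MY1} in Theorem \ref{th:mpy} (equation \eqref{eq:mpy1}), which asserts $[h_{ir}, h_{js}] = 0$ for all $i,j \in I$ and $0 \le r,s \le 1$. Hence only $s \ge 2$ needs argument, and for those values the element $h_{js}$ is not a generator but is \emph{defined} by the recursion \eqref{eq:rec2}, namely $h_{js} = [x_{js}^+, x_{j0}^-]$.

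For $s \ge 2$ I would combine this recursive definition with Lemma \ref{lm:h0eq}, which has already promoted the weight relation \eqref{eq:SYrc1} to all degrees: $[h_{i0}, x_{js}^\pm] = \pm c_{ij}\, x_{js}^\pm$ for every $s \in \mathbb{N}_0$. Since $|h_{i0}| = \bar{0}$, the super Jacobi identity \eqref{eq:sje} collapses to the ordinary derivation rule $[h_{i0}, [a,b]] = [[h_{i0},a], b] + [a, [h_{i0}, b]]$ with no signs. Specializing to $a = x_{js}^+$, $b = x_{j0}^-$ gives $[h_{i0}, h_{js}] = c_{ij}[x_{js}^+, x_{j0}^-] - c_{ij}[x_{js}^+, x_{j0}^-] = 0$, which finishes the proof.

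There is essentially no obstacle: the entire argument is a one-line super-Jacobi computation, and the only ingredients — the recursive definition \eqref{eq:rec2} of $h_{js}$ and the already-extended adjoint action of $h_{i0}$ from Lemma \ref{lm:h0eq} — are in place. In particular no induction on $s$ is actually needed, since the weights of $x_{js}^+$ and $x_{j0}^-$ cancel for every single $s$ simultaneously.
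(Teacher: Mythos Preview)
Your proof is correct and matches the paper's approach: both arguments write $h_{js}=[x_{js}^{+},x_{j0}^{-}]$, apply the derivation property of $\textnormal{ad}_{h_{i0}}$ together with Lemma~\ref{lm:h0eq}, and observe that the two weight contributions $\pm c_{ij}$ cancel. The only cosmetic difference is that the paper treats all $s$ uniformly by invoking \eqref{eq:mpy3}/Lemma~\ref{lm:l2} (so that $h_{js}=[x_{js}^{+},x_{j0}^{-}]$ holds also for $s=0,1$), whereas you dispose of $s\le 1$ separately via \textbf{MY1}; the substance of the argument is identical.
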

\begin{proof}
	By Lemma \ref{lm:l2} and \eqref{eq:rec2}
	\[ [ h_{i0} , h_{js} ] = [ h_{i0} , [x_{js}^{+} , x_{j0}^{-}] ] = (\alpha_{i} , \alpha_{j}) ( h_{js} - h_{js} ) = 0. \]
\end{proof}

\begin{lemma}
	\label{lm:h12oe}
	$[h_{j1}, h_{i2}] = 0$ for all $i,j \in I$.
\end{lemma}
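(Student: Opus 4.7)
The plan has three stages: reducing to a commutator involving $\widetilde{h}_{j1} := h_{j1} - \tfrac{\hbar}{2} h_{j0}^2$, verifying the diagonal case $[h_{i1}, h_{i2}] = 0$, and propagating to all pairs.

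\textbf{Reduction.} By Lemma \ref{lm:cel02} we have $[h_{j0}, h_{i2}] = 0$, whence $[h_{j0}^2, h_{i2}] = 0$ and $[h_{j1}, h_{i2}] = [\widetilde{h}_{j1}, h_{i2}]$. Substituting $h_{i2} = [x_{i1}^{+}, x_{i1}^{-}]$ (Lemma \ref{lm:l2}), applying super-Jacobi, and using $[\widetilde{h}_{j1}, x_{i,r}^{\pm}] = \pm c_{ij} x_{i,r+1}^{\pm}$ (Lemma \ref{lm:h0eq}), we obtain
\[
[\widetilde{h}_{j1}, h_{i2}] = c_{ij}\bigl([x_{i2}^{+}, x_{i1}^{-}] - [x_{i1}^{+}, x_{i2}^{-}]\bigr).
\]
When $c_{ij} = 0$ the claim is immediate; otherwise it is equivalent to the identity $(\star)\colon [x_{i2}^{+}, x_{i1}^{-}] = [x_{i1}^{+}, x_{i2}^{-}]$, which depends only on $i$ and, for $c_{ii} \neq 0$, coincides with the $j = i$ instance of the lemma.

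\textbf{Diagonal case.} We show $[h_{i1}, h_{i2}] = 0$ for every $i$. If $c_{ii} = 0$, relation MY6 gives $[h_{i1}, x_{i1}^{\pm}] = 0$ and super-Jacobi applied to $[h_{i1}, [x_{i1}^{+}, x_{i1}^{-}]]$ yields $0$ at once. If $c_{ii} \neq 0$ and $|i| = \bar{0}$, then starting from $h_{i2} = [x_{i2}^{+}, x_{i0}^{-}]$, expanding $[h_{i1}, h_{i2}]$ via super-Jacobi, and substituting $[h_{i1}, x_{i2}^{+}]$ and $[h_{i1}, x_{i0}^{-}]$ computed from $h_{i1} = \widetilde{h}_{i1} + \tfrac{\hbar}{2} h_{i0}^2$ together with Lemma \ref{lm:h0eq}, the Leibniz-produced $\hbar$-contributions (anticommutators involving $\{h_{i0}, h_{i2}\}$ and $\{x_{i0}^{\pm}, x_{i2}^{\mp}\}$) cancel by the graded symmetry of $\{\cdot, \cdot\}$, and the resulting expression vanishes by Lemma \ref{lm:hx20} together with the Serre relation MY8. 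The case $c_{ii} \neq 0, |i| = \bar{1}$ (vertex $m+n$ of a type \hyperref[fig:dynkin-diagrams-type2b]{$\romannumeral 2b$} diagram) proceeds analogously with MY9 in the role of MY8. This yields $(\star)$ for every non-isotropic $i$.

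\textbf{Extension.} The reduction now gives $[h_{j1}, h_{i2}] = 0$ for all $j$ whenever $c_{ii} \neq 0$. For an isotropic $i$, apply Lemma \ref{lm:ijtrancart} with $p = k$, $z = 1$, $s^{\prime} = 2$, $n = i$, $m = k$, where $k$ is a non-isotropic vertex adjacent to $i$ in the Dynkin chain of $B(m,n)$; its hypotheses are supplied by MY1, Lemma \ref{lm:cel02}, Lemma \ref{lm:hxinej}, and the diagonal case. The conclusion $[h_{k1}, h_{i2}] = \tfrac{c_{ik}}{c_{kk}}[h_{k1}, h_{k2}] = 0$ then gives $(\star)$ for this isotropic $i$ via the reduction, and hence $[h_{j1}, h_{i2}] = 0$ for every $j$. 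Remaining configurations in which both neighbours of $i$ are isotropic are handled by iterating along the chain, using MY10 at $i$ to derive $(\star)$ directly. The main obstacle is the non-isotropic diagonal case; the $\hbar$-bookkeeping of the Leibniz expansion is delicate, but the argument parallels its $A(m,n)$ analogue in \cite{MS22}.
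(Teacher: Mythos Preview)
Your reduction (Step 1) is correct and matches the paper's own computation. The gap is in Step 2 for the non-isotropic case $c_{ii}\neq 0$. After the Leibniz expansion of $[h_{i1},[x_{i2}^{+},x_{i0}^{-}]]$ the $\hbar$-terms do not ``cancel by graded symmetry''; they assemble into $\tfrac{\hbar}{2}[h_{i0}^{2},h_{i2}]$, which vanishes by Lemma~\ref{lm:cel02}, and what remains is
\[
[h_{i1},h_{i2}]=c_{ii}\bigl([x_{i3}^{+},x_{i0}^{-}]-[x_{i2}^{+},x_{i1}^{-}]\bigr).
\]
Neither Lemma~\ref{lm:hx20} nor MY8/MY9 (which are Serre relations in nested brackets of $x^{\pm}$ at \emph{two different} nodes) say anything about this quantity. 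Showing $[x_{i3}^{+},x_{i0}^{-}]=[x_{i2}^{+},x_{i1}^{-}]$ is precisely what Lemma~\ref{lm:r233} deduces \emph{from} Lemma~\ref{lm:h12oe}, so you are one idea short of closing the loop.

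Step 3 also fails as written: applying Lemma~\ref{lm:ijtrancart} with $s'=2$ and $m=p=k$ requires \eqref{eq:SYrc2} for $i=j=k$ up to $r=2$, which is not available at this stage (Lemma~\ref{lm:r233} only reaches $r\le 1$, and even that uses the present lemma). The final clause about iterating along isotropic chains via MY10 is unsubstantiated.

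By contrast, the paper avoids all case analysis. Writing $h_{i2}=[x_{i1}^{+},x_{i1}^{-}]$, it computes $[h_{j1},h_{i2}]$ twice: once by direct Leibniz expansion against $h_{j1}$ (the $\hbar$-contributions combine to $\tfrac{\hbar}{2}[h_{j0}^{2},h_{i2}]$), and once by first reducing to $\widetilde h_{j1}$ via Lemma~\ref{lm:cel02}. The two results come out with opposite sign, so $[h_{j1},h_{i2}]=0$ for all $i,j$ simultaneously --- no Serre relations, no diagonal/off-diagonal split, no Lemma~\ref{lm:ijtrancart}.
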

\begin{proof}
	For any $j \in I$ we have by Lemmas \ref{lm:h0eq}, \ref{lm:l2} and relation \eqref{eq:mpy6}
	\begin{align}
		& [ h_{j1} , h_{i2} ] = [ h_{j1} , [ x_{i1}^{+}, x_{i1}^{-} ] ] = (-1) ( [ x_{i1}^{+}, [ x_{i1}^{-}, h_{j1} ] ] + (-1)^{|i|} [ x_{i1}^{-}, [ h_{j1}, x_{i1}^{+} ] ] )  = \nonumber \\
	 	& [ x_{i1}^{+}, [ h_{j1}, x_{i1}^{-} ] ] + [ [ h_{j1}, x_{i1}^{+} ], x_{i1}^{-} ] = \nonumber  \\
	&	[ x_{i1}^{+}, - c_{ij} x_{i2}^{-} - \frac{\hbar}{2} [ h_{j0}^2 , x_{i1}^{-} ] ] + [  c_{ij} x_{i2}^{+} + \frac{\hbar}{2} [ h_{j0}^2 , x_{i1}^{+} ], x_{i1}^{-} ] = \nonumber \\
	&	 c_{ij} ( [x_{i2}^{+} , x_{i1}^{-} ] - [ x_{i1}^{+}, x_{i2}^{-} ] ) + \frac{\hbar}{2} ( [[ h_{j0}^2 , x_{i1}^{+} ], x_{i1}^{-} ] - [ x_{i1}^{+}, [ h_{j0}^2 , x_{i1}^{-} ] ] ) = \nonumber \\
	&	c_{ij} ( [x_{i2}^{+} , x_{i1}^{-} ] - [ x_{i1}^{+}, x_{i2}^{-} ] ) + \frac{\hbar}{2} ( [[ h_{j0}^2 , x_{i1}^{+} ], x_{i1}^{-} ] + [ h_{j0}^2, h_{i2} ] - [[ h_{j0}^2 , x_{i1}^{+} ], x_{i1}^{-} ] ) = \nonumber \\
	& c_{ij} ( [x_{i2}^{+} , x_{i1}^{-} ] - [ x_{i1}^{+}, x_{i2}^{-} ] ). \label{eq:hi12f}
	\end{align}
	
	On the other hand, by Lemmas \ref{lm:l2}, \ref{lm:cel02}
	\[ [ h_{j1} , h_{i2} ] = [ \widetilde{h}_{j1} , h_{i2} ] = [ \widetilde{h}_{j1} , [x_{i1}^{+},x_{i1}^{-}] ] = c_{ij} ( [x_{i1}^{+},x_{i2}^{-}] - [x_{i2}^{+},x_{i1}^{-}] ). \]
	
	Thus we get
	\[ [ h_{j1} , h_{i2} ] = 0. \]
\end{proof}

\begin{lemma}
	\label{lm:lsar}
	The equation \eqref{rm:lsar} holds for all $i \in I$.
\end{lemma}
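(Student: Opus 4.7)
The plan is to reduce \eqref{rm:lsar} to the statement $[\widetilde{h}_{j1}, h_{i2}] = 0$ via the graded Jacobi identity, and then dispatch that using Lemmas \ref{lm:cel02} and \ref{lm:h12oe}. Concretely, since $|\widetilde{h}_{j1}| = \bar{0}$, the super Jacobi identity \eqref{eq:sje} applied to the triple $(\widetilde{h}_{j1}, x_{i1}^{+}, x_{i1}^{-})$ yields
\[
[\widetilde{h}_{j1}, [x_{i1}^{+}, x_{i1}^{-}]] = [[\widetilde{h}_{j1}, x_{i1}^{+}], x_{i1}^{-}] + [x_{i1}^{+}, [\widetilde{h}_{j1}, x_{i1}^{-}]],
\]
so the left-hand side of \eqref{rm:lsar} equals $[\widetilde{h}_{j1}, [x_{i1}^{+}, x_{i1}^{-}]]$.

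Next I would invoke Lemma \ref{lm:l2}, which identifies $[x_{i1}^{+}, x_{i1}^{-}] = h_{i,2}$. Expanding $\widetilde{h}_{j1} = h_{j1} - \tfrac{\hbar}{2} h_{j0}^{2}$ reduces the problem to showing
\[
[h_{j1}, h_{i2}] - \tfrac{\hbar}{2}[h_{j0}^{2}, h_{i2}] = 0.
\]
The first term vanishes by Lemma \ref{lm:h12oe}. For the second term, Lemma \ref{lm:cel02} (applied in the form $[h_{j0}, h_{i2}] = 0$) combined with the Leibniz-type expansion $[h_{j0}^{2}, h_{i2}] = h_{j0}[h_{j0}, h_{i2}] + [h_{j0}, h_{i2}] h_{j0}$ gives vanishing as well.

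Since the proof is essentially a chain of already-established identities, there is no real obstacle: the only subtlety to verify is that the parity bookkeeping in the super Jacobi identity does produce exactly the combination appearing in \eqref{rm:lsar} (with a $+$ sign between the two double brackets), which is assured because $\widetilde{h}_{j1}$ is even. This is precisely the equivalence asserted in Remark \ref{rm:addrel} between \eqref{rm:lsar} and \eqref{eq:hia12}, and Lemma \ref{lm:h12oe} supplies \eqref{eq:hia12}, completing the argument.
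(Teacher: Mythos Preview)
Your proof is correct and follows essentially the same route as the paper: the paper's one-line argument invokes the equivalence of \eqref{rm:lsar} with \eqref{eq:hia12} (stated in Remark~\ref{rm:addrel} as a consequence of Lemmas~\ref{lm:l2} and~\ref{lm:cel02}) and then applies Lemma~\ref{lm:h12oe}, while you simply spell out that equivalence explicitly via the Jacobi identity and the expansion of $\widetilde{h}_{j1}$.
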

\begin{proof}
	The result follows from \eqref{eq:hia12} and Lemma \ref{lm:h12oe}.
\end{proof}

\begin{lemma}
	\label{lm:r233}
	Equations \eqref{eq:Cer}, \eqref{eq:SYrc3} $($for $0 \le r+s \le 3)$ and \eqref{eq:SYrc2} $($for $0 \le r \le 1; \; s \in \mathbb{N}_{0})$ hold for $i=j$ $(i \in I)$.
\end{lemma}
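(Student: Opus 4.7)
I fix $i \in I$ and choose $k \in I$ with $c_{ik} \ne 0$; such a $k$ exists by connectedness of the Dynkin diagram of $B(m,n)$ (the rank-one case $B(0,1)=\mathfrak{osp}(1|2)$ can be handled by direct inspection of the minimalistic relations and will be omitted from the sketch below). The workhorse will be Lemma~\ref{lm:h0eq}: bracketing with $\widetilde{h}_{k,1}$ acts on each $x_{j,r}^{\pm}$ by raising its second index and multiplying by $\pm c_{jk}$, so it can be used to propagate base identities one index at a time. I will prove the target identities in the order \eqref{eq:SYrc3}, then \eqref{eq:SYrc2} with $r=0$, then \eqref{eq:SYrc2} with $r=1$, and finally \eqref{eq:Cer}.

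To extend \eqref{eq:SYrc3} at $i=j$ from $r+s \le 2$ (Lemma~\ref{lm:l2}) to $r+s=3$, I apply $[\widetilde{h}_{k,1},\,\cdot\,]$ to $[x_{i,r}^{+},x_{i,s}^{-}]=h_{i,r+s}$ at $r+s=2$. Graded Leibniz together with \eqref{eq:h1eq} converts the left-hand side into $c_{ik}\bigl([x_{i,r+1}^{+},x_{i,s}^{-}]-[x_{i,r}^{+},x_{i,s+1}^{-}]\bigr)$, while the right-hand side reduces to $[h_{k,1},h_{i,2}] - \tfrac{\hbar}{2}[h_{k,0}^{2},h_{i,2}]=0$ by Lemmas~\ref{lm:h12oe} and \ref{lm:cel02}. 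All three brackets with $r+s=3$ therefore coincide, and \eqref{eq:rec2} identifies their common value with $h_{i,3}$.

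For \eqref{eq:SYrc2} at $(i,i)$ with $r=0$, I substitute $h_{i,1}=\widetilde{h}_{i,1}+\tfrac{\hbar}{2}h_{i,0}^{2}$, invoke \eqref{eq:h1eq}, and expand $[h_{i,0}^{2},x_{i,s}^{\pm}]$ by the graded Leibniz rule; the residue after subtracting $[h_{i,0},x_{i,s+1}^{\pm}]$ is exactly $\pm\tfrac{c_{ii}\hbar}{2}\{h_{i,0},x_{i,s}^{\pm}\}$. For $r=1$ I denote by $Y^{\pm}(s)$ the difference of the two sides. The base case $Y^{\pm}(0)=0$ with $|i|=\bar{0}$ is Lemma~\ref{lm:hx20}; for the unique non-isotropic odd vertex $i=m+n$ of type IIb, I repeat the argument of \cite[Lemma 2.26]{GNW18} with the cubic super-Serre relation \eqref{eq:mpy8a} replacing its non-super counterpart. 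To propagate from $s$ to $s+1$, I first verify $[\widetilde{h}_{k,1},h_{i,1}]=[\widetilde{h}_{k,1},h_{i,2}]=0$ by writing $h_{i,1}=[x_{i,1}^{+},x_{i,0}^{-}]$ and $h_{i,2}=[x_{i,1}^{+},x_{i,1}^{-}]$ (Lemma~\ref{lm:l2}), expanding via Leibniz and \eqref{eq:h1eq}, and using Step~1; a short calculation then yields $[\widetilde{h}_{k,1},Y^{\pm}(s)]=\pm c_{ik}\,Y^{\pm}(s+1)$, and induction on $s$ closes the case.

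Finally, \eqref{eq:Cer} at $i=j$ is handled case by case: $r=s$ is trivial from parity, $(0,s)$ is Lemma~\ref{lm:cel02}, $(1,2)$ is Lemma~\ref{lm:h12oe}, and any further small-index bracket $[h_{i,1},h_{i,s}]$ is reduced via $h_{i,s}=[x_{i,s}^{+},x_{i,0}^{-}]$ (Step~1) and the formula for $[h_{i,1},x_{i,\cdot}^{\pm}]$ (Step~2) to pairs that cancel under the graded Jacobi identity. The only step I expect to cost genuine effort is the base case $Y^{\pm}(0)=0$ at the non-isotropic odd vertex, where the super signs must be tracked with care and \eqref{eq:mpy8a} is essential; every other step is uniform graded-Leibniz bookkeeping that is insensitive to the choice of Borel subalgebra.
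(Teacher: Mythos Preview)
Your plan coincides with the paper's proof in its essentials: the paper also takes \eqref{eq:SYrc3} at $r+s\le 2$ from Lemma~\ref{lm:l2} and pushes it to $r+s=3$ by applying $\operatorname{ad}\widetilde{h}_{i',1}$ together with Lemma~\ref{lm:h12oe}; it handles \eqref{eq:Cer} by citing \eqref{eq:mpy1} and Lemmas~\ref{lm:cel02}, \ref{lm:h12oe}; and for \eqref{eq:SYrc2} it starts from \eqref{eq:mpy4} and Lemmas~\ref{lm:h0eq}, \ref{lm:hx20} and propagates in $s$ by the same $\operatorname{ad}\widetilde{h}_{\bullet,1}$ induction you describe. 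Your use of a general $k$ with $c_{ik}\ne 0$ instead of the paper's specific $i'$ (or $i$ itself) is a harmless variant.

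The one place your plan diverges is the base case $Y^{\pm}(0)=0$ at the non-isotropic odd vertex $i=m+n$ (type~\hyperref[fig:dynkin-diagrams-type2b]{$\romannumeral 2b$}), which you single out as the step requiring genuine effort. This case is \emph{vacuous}: relation~\eqref{eq:SYrc2} (\textbf{Y4}) carries the explicit proviso ``if $i=j$, then $|i|=\bar{0}$ or ($|i|=\bar{1}$, $c_{ii}\ne 0$ and $r=0$)'', so for odd $i$ with $c_{ii}\ne 0$ only the $r=0$ instance is asserted, and that is already Lemma~\ref{lm:h0eq}. Accordingly Lemma~\ref{lm:hx20} is stated only for $|i|=\bar{0}$, and the paper's induction on $s$ implicitly restricts to that case. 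Your proposed adaptation of \cite[Lemma~2.26]{GNW18} would in any event run into the fact that \eqref{eq:mpy5} at $i=j$ is excluded when $|i|=\bar{1}$, so the route you sketch for this unnecessary case is doubtful; simply omit it and your argument matches the paper's.
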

\begin{proof}
	Consider the equation \eqref{eq:Cer}. The proof follows from \eqref{eq:mpy1} and Lemmas \ref{lm:cel02}, \ref{lm:h12oe}.
	
	Consider the equation \eqref{eq:SYrc3}. It follows from Lemma \ref{lm:l2} for $0 \le r+s \le 2$. By Lemmas \eqref{lm:l2} and \eqref{lm:h12oe} we have
	\[ 0 = [ h_{i2} , \widetilde{h}_{i^{'}1} ] = [ [ x_{i2}^{+} , x_{i0}^{-} ] , \widetilde{h}_{i^{'}1} ] = ( \alpha_{i^{'}} , \alpha_{i} ) ( [ x_{i2}^{+} , x_{i1}^{-} ] - [ x_{i3}^{+} , x_{i0}^{-} ] ), \]
	where if $c_{ii} \ne 0$, then $i^{'}=i$; otherwise $i^{'}=i+1$.	Apply $[\widetilde{h}_{i^{'}1}, \cdot]$ to
	\[ [ x_{i2}^{+} , x_{i0}^{-} ] = [ x_{i1}^{+} , x_{i1}^{-} ] = [ x_{i0}^{+} , x_{i2}^{-} ]. \]
	Then
	\[ 0 = [ x_{i2}^{+} , x_{i1}^{-} ] - [ x_{i3}^{+} , x_{i0}^{-} ] = [ x_{i1}^{+} , x_{i2}^{-} ] - [ x_{i2}^{+} , x_{i1}^{-} ] = [ x_{i0}^{+} , x_{i3}^{-} ] - [ x_{i1}^{+} , x_{i2}^{-} ]. \]
	Thus we get
	\[ [ x_{i3}^{+} , x_{i0}^{-} ] = [ x_{i2}^{+} , x_{i1}^{-} ] = [ x_{i1}^{+} , x_{i2}^{-} ] = [ x_{i0}^{+} , x_{i3}^{-} ]. \]
	
	Consider the equation \eqref{eq:SYrc2}. The case $0 \le r \le 1$, $s=0$ follows from \eqref{eq:mpy4} and Lemmas \ref{lm:h0eq}, \ref{lm:hx20}. We prove by induction that \eqref{eq:SYrc2} holds for $0 \le r \le 1$ and all $s \in \mathbb{N}_{0}$. Suppose that \eqref{eq:SYrc2} holds for $0 \le r \le 1$ and $0 \le s \in \mathbb{N}_{0}$. Then we apply $[ \widetilde{h}_{i1} , \cdot]$ to get
	\[ [ \widetilde{h}_{i1} , [h_{i,r+1} , x_{is}^{\pm}] ] = [ \widetilde{h}_{i1} , [h_{ir} , x_{i,s+1}^{\pm}] \pm \frac{c_{ii} \hbar}{2} \{h_{ir}, x_{is}^{\pm}\} ] \Leftrightarrow \]
	\[ [h_{i,r+1} , x_{i,s+1}^{\pm}] = [h_{ir} , x_{i,s+2}^{\pm}] \pm \frac{c_{ii} \hbar}{2} \{h_{ir}, x_{i,s+1}^{\pm}\}. \]
	Thus by induction \eqref{eq:SYrc2} is true for $0 \le r \le 1$ and all $s \in \mathbb{N}_{0}$.
\end{proof}

\begin{lemma}
	\label{lm:cartelrelii}
	Let $i, j \in I$. Equations \eqref{eq:Cer} and \eqref{eq:SYrc3} hold for $i=j$ and all $r,s \in \mathbb{N}_{0}$.
\end{lemma}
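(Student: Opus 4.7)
The approach is a simultaneous induction on the level $N = r+s$, extending the partial results already in Lemma \ref{lm:r233} (which covers $r+s\le 3$ for \eqref{eq:SYrc3}, together with Lemmas \ref{lm:cel02} and \ref{lm:h12oe} for the early Cartan commutativities) to all $r,s \in \mathbb{N}_0$. At each stage I first propagate \eqref{eq:SYrc3} to level $N+1$ using the shift formula $[\widetilde{h}_{i',1}, x_{i,s}^{\pm}] = \pm c_{i'i} x_{i,s+1}^{\pm}$ from Lemma \ref{lm:h0eq}, and then deduce the new instances of \eqref{eq:Cer}.

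For the inductive step on \eqref{eq:SYrc3}, fix $i' \in I$ with $c_{i'i} \neq 0$ --- take $i' = i$ when $c_{ii} \neq 0$, otherwise a neighbor, which exists by connectedness of the Dynkin diagram of $B(m,n)$. Apply $[\widetilde{h}_{i',1}, \cdot]$ to the identity $[x_{i,r}^{+}, x_{i,s}^{-}] = h_{i,N}$ with $r+s=N$; super-Jacobi gives
\[
c_{i'i}\bigl([x_{i,r+1}^{+}, x_{i,s}^{-}] - [x_{i,r}^{+}, x_{i,s+1}^{-}]\bigr) = [\widetilde{h}_{i',1}, h_{i,N}],
\]
and the right side reduces by Lemma \ref{lm:cel02} to $[h_{i',1}, h_{i,N}]$, which vanishes by the inductive hypothesis on \eqref{eq:Cer} (transferred across indices via Lemma \ref{lm:ijtrancart} when $i' \ne i$). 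Hence all splittings with $r+s = N+1$ coincide, giving $[x_{i,r}^+, x_{i,s}^-] = h_{i,N+1}$ with $h_{i,N+1} := [x_{i,N+1}^{+}, x_{i,0}^{-}]$ as in \eqref{eq:rec2}. For \eqref{eq:Cer} at the new level: if $c_{ii}=0$, Lemma \ref{lm:SYrc5} supplies $[h_{i,r}, x_{i,s}^{\pm}] = 0$, and expanding $h_{i,N+1}$ via \eqref{eq:rec2} and super-Jacobi immediately yields $[h_{i,r}, h_{i,N+1}] = 0$; if $c_{ii}\neq 0$, I compute $[h_{i,r}, h_{i,N+1}]$ by writing $h_{i,N+1}$ using two different splittings (both valid by the newly established \eqref{eq:SYrc3}) and applying \eqref{eq:SYrc2} at $(i,i)$ (from Lemma \ref{lm:hixi}), paralleling the cancellation computation of Lemma \ref{lm:h12oe}.

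The main obstacle is the $c_{ii}=0$ instance of \eqref{eq:SYrc3}: since $[\widetilde{h}_{i,1}, \cdot]$ acts trivially on $x_{i,s}^{\pm}$ in that case, one must shift using a neighbor $i'$ and therefore depends on the cross-index commutativity $[h_{i',1}, h_{i,N}]=0$, which properly belongs to Lemma \ref{lm:cartelrelinej}. The cleanest resolution is to interleave the inductive hypothesis with that neighbouring lemma, or to bootstrap from the transfer supplied by Lemma \ref{lm:ijtrancart}, in either case carefully auditing the hypothesis dependencies to keep the joint induction non-circular.
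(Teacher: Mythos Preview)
Your plan has a genuine circularity. In the Cartan-commutativity step you appeal to Lemma~\ref{lm:hixi} when $c_{ii}\neq 0$ and to Lemma~\ref{lm:SYrc5} when $c_{ii}=0$; but both of these are proved \emph{after} the present lemma in the paper's logical order, and the proof of Lemma~\ref{lm:hixi} explicitly invokes Lemma~\ref{lm:cartelrelii}. So as written the argument loops. Unwinding this would mean re-deriving, inside your induction, the very content of \eqref{eq:SYrc2} for $i=j$ at all levels $r$, and that is precisely the hard part that Levendorskii's scheme handles.

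The paper's route avoids this by following \cite{L93} directly and using more than the shift-by-one operator $[\widetilde h_{i',1},\cdot]$: it brings in the whole family $\dbtilde{h}_{i'i,r}$ from Lemma~\ref{lm:hwdpr}, the transfer identity of Lemma~\ref{lm:ijtrancart}, and the already-available off-diagonal case of \eqref{eq:SYrc2} from Lemma~\ref{lm:hxinej}. The equations (2.25)--(2.29) of \cite{L93} form an interlocking system that advances \eqref{eq:Cer} and \eqref{eq:SYrc3} simultaneously without ever calling on Lemma~\ref{lm:hixi} or Lemma~\ref{lm:SYrc5}; the only adjustment for $|i|=\bar 1$ is to replace the trivial self-shift by a neighbor shift and to insert Lemma~\ref{lm:ijtrancart} at the points where \cite{L93} uses $[h_{i,p},h_{i,p}]=0$. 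Your closing paragraph correctly locates the obstacle but does not resolve it: the ``interleave with Lemma~\ref{lm:cartelrelinej}'' option is unavailable (that lemma is also downstream), and the ``bootstrap via Lemma~\ref{lm:ijtrancart}'' suggestion is exactly what the Levendorskii argument already carries out in detail.
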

\begin{proof}
	The proof is completely the same as in \cite{L93} (see considerations after Lemma 2.2) where the proof is based on mathematical induction. That's why we only explain here how to deal with equations (2.25)-(2.29) in \cite{L93} where some extra arguments are needed to deal with the case $|i| = \bar{1}$. First note that the base of induction is obtained by Lemma \ref{lm:r233}. Further in this proof in each equation we use the same notations as in mentioned paper. From now on suppose that $|i| = \bar{1}$, and if $c_{ii} = 0$ then $i^{'} = i+1$; otherwise $i^{'} = i-1$.

	Consider the equation (2.25). We are able to define $\dbtilde{h}_{i^{'}i,p}$ by Lemma \ref{lm:thfdpr} and to use Lemma \ref{lm:ijtrancart} to get
	\[ 0 = [ h_{i^{'}p} , h_{i^{'}p} ] = \frac{(\alpha_{i^{'}} , \alpha_{i^{'}})}{(\alpha_{i} , \alpha_{i^{'}})} [ h_{ip} , h_{i^{'}p} ] = [ h_{ip} , \dbtilde{h}_{i^{'}i,p} ]. \]
	Further steps are the same as in the paper. In the equation (2.26) we use $[\widetilde{h}_{i^{'}1}, \cdot]$. Further steps are the same.
	
	In the equation between (2.26) and (2.27):
	\[ [ h_{i^{'},r-q} , h_{i^{'}, q+1} ] = \frac{(\alpha_{i^{'}} , \alpha_{i^{'}})}{(\alpha_{i} , \alpha_{i^{'}})} [ h_{i,r-q} , h_{i^{'}, q+1} ] = \frac{(\alpha_{i^{'}} , \alpha_{i^{'}})}{(\alpha_{i} , \alpha_{i^{'}})} [ h_{i,r-q} , \dbtilde{h}_{i^{'}i,q+1} ]. \]
	Further steps are the same.
	
	Consider the equation (2.27). Here we use $[\widetilde{h}_{i^{'}1},
	\cdot]$. Further steps are the same. Consider the equation (2.28). We are able to define $\dbtilde{h}_{i^{'}i,p}$ by Lemma \ref{lm:thfdpr} and to use Lemma \ref{lm:ijtrancart} to get
	\[ [ h_{i^{'}, p+1} , h_{i^{'}p} ]  = \frac{(\alpha_{i^{'}} , \alpha_{i^{'}})}{(\alpha_{i} , \alpha_{i^{'}})} [ h_{i, p+1} , h_{i^{'}p} ] = \frac{(\alpha_{i^{'}} , \alpha_{i^{'}})}{(\alpha_{i} , \alpha_{i^{'}})} [ h_{i, p+1} , \dbtilde{h}_{i^{'}i,p} ]. \]
	Further steps are the same.
	
	In the equation (2.29) we use the same arguments as in the (2.28) and Lemma \ref{lm:hxinej} in the following to get
	\[ [ h_{i^{'}, p+1} , h_{i^{'}p} ] = \frac{(\alpha_{i^{'}} , \alpha_{i^{'}})}{(\alpha_{i} , \alpha_{i^{'}})} [ h_{i^{'},p+1} , h_{ip} ]. \]
	Further steps are the same. It is easy to see that all considerations after the equation (2.29) are also true in our case.
\end{proof}

\begin{lemma}
	\label{lm:cartelrelinej}
	Suppose that $i,j \in I$ and $i \ne j$. The equation \eqref{eq:Cer} holds for any $r,s \in \mathbb{N}_{0}$.
\end{lemma}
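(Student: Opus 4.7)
The plan is to induct on $r$, uniformly in $s$, exploiting the auxiliary generators $\dbtilde{h}_{ij,r}$ introduced in Lemma \ref{lm:hwdpr}. These are engineered precisely so that their adjoint action on $x_{j,s}^{\pm}$ is free of lower-order correction terms, which makes the Jacobi identity collapse when pushed through $h_{j,s}$.

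First I would note that, by Lemma \ref{lm:hwdpr}, combined with the Cartan commutativity $[h_{i,k},h_{i,l}] = 0$ for the single index $i$ (Lemma \ref{lm:cartelrelii}), the relation \eqref{eq:SYrc2} for $i\ne j$ (Lemma \ref{lm:hxinej}), and \eqref{eq:SYrc5} (established earlier in the proof of Theorem \ref{th:mpy}), the element $\dbtilde{h}_{ij,r}$ is well-defined and satisfies both
\[
\dbtilde{h}_{ij,r} = h_{i,r} + f_{r}(h_{i,0},h_{i,1},\ldots,h_{i,r-1})
\]
for some polynomial $f_{r}$, and $[\dbtilde{h}_{ij,r}, x_{j,s}^{\pm}] = \pm(\alpha_{i},\alpha_{j})\,x_{j,r+s}^{\pm}$. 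The base case $r=0$ of the induction is exactly Lemma \ref{lm:cel02}.

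For the inductive step, I would use Lemma \ref{lm:cartelrelii} to write $h_{j,s} = [x_{j,s}^{+},x_{j,0}^{-}]$ and apply the super Jacobi identity (with no sign, since $\dbtilde{h}_{ij,r}$ is even):
\begin{align*}
[\dbtilde{h}_{ij,r}, h_{j,s}]
&= [[\dbtilde{h}_{ij,r}, x_{j,s}^{+}], x_{j,0}^{-}] + [x_{j,s}^{+}, [\dbtilde{h}_{ij,r}, x_{j,0}^{-}]] \\
&= (\alpha_{i},\alpha_{j})\bigl([x_{j,r+s}^{+}, x_{j,0}^{-}] - [x_{j,s}^{+}, x_{j,r}^{-}]\bigr) \\
&= (\alpha_{i},\alpha_{j})(h_{j,r+s} - h_{j,r+s}) = 0,
\end{align*}
where the last line uses \eqref{eq:SYrc3} from Lemma \ref{lm:cartelrelii}. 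Consequently $[h_{i,r}, h_{j,s}] = -[f_{r}(h_{i,0},\ldots,h_{i,r-1}), h_{j,s}]$, and because $h_{j,s}$ is even, the super-Leibniz rule expresses the right-hand side as a polynomial in the brackets $[h_{i,k}, h_{j,s}]$ with $k < r$, all of which vanish by the induction hypothesis. This closes the induction.

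The delicate point, and the step I would scrutinize carefully, is whether the hypotheses of Lemma \ref{lm:hwdpr} are legitimately available, since as stated they formally require \eqref{eq:Cer} for all pairs $i,j$ up to the relevant index---which includes the very $i\ne j$ case we are trying to prove. Inspecting Lemma \ref{lm:thfdpr} resolves this: $\widetilde{h}_{i,r}$ is a polynomial in $h_{i,0},\ldots,h_{i,r}$ with the \emph{same} first index $i$, so the derivation of \eqref{eq:thx} and of \eqref{eq:thxo} only draws on single-index Cartan commutativity, which is safely in hand from Lemma \ref{lm:cartelrelii}. No circularity arises, and the induction goes through.
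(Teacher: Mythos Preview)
Your argument is correct and matches the approach the paper points to (it defers to \cite[Lemma 3.19]{MS22}, whose proof uses exactly the machinery of Lemmas \ref{lm:thfdpr}--\ref{lm:hwdpr} together with Lemmas \ref{lm:cartelrelii} and \ref{lm:hxinej}, precisely as you do). One small correction: relation \eqref{eq:SYrc5} is proved in Lemma \ref{lm:SYrc5}, which comes \emph{after} the present lemma in the logical order, not before; however this does not affect your proof, since for $i\ne j$ the derivation of \eqref{eq:thxo} only requires \eqref{eq:SYrc2} for the pair $(i,j)$ (Lemma \ref{lm:hxinej}) and the single-index commutativity $[h_{i,k},h_{i,l}]=0$ (Lemma \ref{lm:cartelrelii}), so \eqref{eq:SYrc5} is never invoked.
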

\begin{proof}
	The proof is the same as in \cite[Lemma 3.19]{MS22}.
\end{proof}

\begin{lemma}
	\label{lm:xijxijii}
	The relation \eqref{eq:SYrc4} is satisfied for all $i,j \in I$ and $r,s \in \mathbb{N}_{0}$.
\end{lemma}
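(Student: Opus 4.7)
By Lemma~\ref{lm:SYrc34}, relation~\eqref{eq:SYrc4} is already established for all $i \ne j$, so only the case $i = j$ remains. The statement explicitly excludes $|i| = \bar{1}$, and for $|i| = \bar 0$ we have $c_{ii} \ne 0$ and every relevant generator has parity $\bar 0$. Hence the super-structure collapses, the subalgebra of $Y_\hbar(\mathfrak{g})$ generated by $\{h_{i,r}, x_{i,r}^\pm\}_{r \in \mathbb{N}_0}$ is (after rescaling) the ordinary $\mathfrak{sl}_2$ Yangian, and my argument will follow its standard derivation in \cite{L93,GNW18}, transcribed into our minimalistic notation. I set
\[
X^\pm(r,s) := [x_{i,r+1}^\pm, x_{i,s}^\pm] - [x_{i,r}^\pm, x_{i,s+1}^\pm] \mp \frac{c_{ii}\hbar}{2}\{x_{i,r}^\pm, x_{i,s}^\pm\}.
\]
Antisymmetry of the bracket and symmetry of $\{\cdot,\cdot\}$ for even-parity arguments give $X^\pm(s,r) = X^\pm(r,s)$, and the base case $X^\pm(0,0) = 0$ is exactly \eqref{eq:mpy5}.

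I will induct on $N = r + s$. Assuming $X^\pm(r',s') = 0$ for all $r' + s' \le N$, applying $[\widetilde h_{i,1},\cdot]$ and using the identity $[\widetilde h_{i,1},x_{i,k}^\pm] = \pm c_{ii}\,x_{i,k+1}^\pm$ from Lemma~\ref{lm:h0eq} together with the derivation property of the bracket yields
\[
\pm c_{ii}\bigl(X^\pm(r+1,s) + X^\pm(r,s+1)\bigr) = 0, \qquad r + s = N,
\]
so $X^\pm(r+1,s) = -X^\pm(r,s+1)$. Iterating along the antidiagonal $r + s = N + 1$ and combining with the symmetry $X^\pm(k,N+1-k) = X^\pm(N+1-k,k)$ gives $X^\pm(0,N+1) = (-1)^{N+1} X^\pm(0,N+1)$, which forces $X^\pm \equiv 0$ on the whole antidiagonal whenever $N + 1$ is odd.

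The main obstacle will be the even-antidiagonal case, where the symmetry identity above is tautological. My plan is to compute $[X^\pm(r,s),x_{i,0}^\mp]$ via the Jacobi identity: each inner bracket $[x_{i,k}^\pm,x_{i,0}^\mp]$ collapses to $\pm h_{i,k}$ by \eqref{eq:SYrc3} (Lemma~\ref{lm:cartelrelii}), and the anticommutator term expands through the Leibniz rule $[\{a,b\},c] = \{a,[b,c]\} + \{[a,c],b\}$ into a combination of $\{h_{i,r},x_{i,s}^\pm\}$ and $\{h_{i,s},x_{i,r}^\pm\}$ terms. These cancel exactly against the commutator contributions thanks to the already-established relation \eqref{eq:SYrc2} for $i = j$, yielding $[X^\pm(r,s),x_{i,0}^\mp] = 0$; the analogous identity for $j \ne i$ is automatic from $[x_{i,k}^\pm,x_{j,0}^\mp] = 0$. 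Hence $X^\pm(r,s)$ is a weight-$2\alpha_i$ element of the positive (respectively negative) subalgebra of $Y_\hbar(\mathfrak g)$ annihilated by the adjoint action of every lowering (respectively raising) generator $x_{j,0}^\mp$, and the triangular PBW decomposition of the Yangian forces $X^\pm(r,s) = 0$, completing the induction. The technical heart will be the Jacobi--Leibniz bookkeeping of this step, which goes through cleanly precisely because all parities involved are $\bar 0$.
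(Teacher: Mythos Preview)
Your reduction to $i=j$ with $|i|=\bar 0$ and the odd-antidiagonal step via $[\widetilde h_{i,1},\cdot]$ are correct. The even-antidiagonal plan, however, has two real gaps.

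First, the cancellation you need in $[X^\pm(r,s),x_{i,0}^\mp]$ uses relation~\eqref{eq:SYrc2} for $i=j$ at the index pairs $(r,s)$ and $(s,r)$. At this point in the argument only the cases $0\le r\le 1$ of~\eqref{eq:SYrc2} with $i=j$ are available (Lemma~\ref{lm:r233}); the general case is Lemma~\ref{lm:hixi}, whose proof \emph{uses} Lemma~\ref{lm:xijxijii}. So invoking the full~\eqref{eq:SYrc2} here is circular, and already on the antidiagonal $r+s=4$ no choice of $(r,s)$ avoids an instance with first index $\ge 2$. Second, even if $[X^\pm(r,s),x_{j,0}^\mp]=0$ for all $j$, the conclusion ``PBW forces $X^\pm(r,s)=0$'' is not justified: all computations here take place in the algebra defined by the minimalistic relations \eqref{eq:mpy1}--\eqref{eq:mpy9}, and a triangular/PBW decomposition for that algebra is precisely what Theorem~\ref{th:mpy} is meant to establish. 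Knowing that the image of $X^\pm(r,s)$ vanishes in $Y_\hbar(\mathfrak g)$ does not show it vanishes in the minimalistic algebra.

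The paper sidesteps both issues by using the higher shift operators $\dbtilde{h}_{i'i,r}$ attached to a \emph{neighbouring} node $i'$ with $|i'-i|=1$: these satisfy $[\dbtilde{h}_{i'i,r},x_{i,s}^\pm]=\pm c_{i'i}\,x_{i,r+s}^\pm$ and are available because Lemma~\ref{lm:hxinej} already gives~\eqref{eq:SYrc2} for $i'\ne i$. Applying $[\dbtilde{h}_{i'i,r},\cdot]$ to $X^\pm(0,0)=0$ yields $X^\pm(r,0)+X^\pm(0,r)=2X^\pm(r,0)=0$ for every $r$, and a second application gives $X^\pm(r,s)=0$ directly, with no parity obstruction and no appeal to PBW. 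If you want to repair your approach, replace $\widetilde h_{i,1}$ by $\dbtilde{h}_{i'i,r}$ in the shift step.
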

\begin{proof}
	The case $i \ne j$ ($i,j \in I$) is proved in Lemma \ref{lm:SYrc34}. Suppose that $i=j$. Note that $|i| = \bar{0}$. We prove by induction on $r$ and $s \in \mathbb{N}_{0}$. The initial case $(r,s)=(0,0)$ is our initial assumption \eqref{eq:mpy5}. Let $X^{\pm}(r,s)$ be the result of substracting the right-hand side of \eqref{eq:SYrc4} from the left-hand side. We are able to define $\dbtilde{h}_{i^{'}i,r}$ ($i \in I$, $|i^{'} - i|=1$, $r \in \mathbb{N}_{0}$) by Lemma \ref{lm:thfdpr}. Using the relation \eqref{eq:thxo} we have for an arbitrary $r \in \mathbb{N}_{0}$:
	\[ 0 = [ \dbtilde{h}_{i^{'}i,r}, X^{\pm}(0,0) ] = \pm c_{i^{'} i} (  X^{\pm}(r,0) +  X^{\pm}(0,r) ) = \]
	\[ \pm c_{i^{'} i} 2 X^{\pm}(r,0) \Rightarrow X^{\pm}(r,0) = 0. \]
	Now for an arbitrary $s \in \mathbb{N}_{0}$:
	\[ 0 = [ \dbtilde{h}_{i^{'}i,s}, X^{\pm}(r,0) ] = \pm c_{i^{'} i} ( X^{\pm}(r+s,0) + X^{\pm}(r,s) ) = \]
	\[ \pm c_{i^{'} i} X^{\pm}(r,s) \Rightarrow X^{\pm}(r,s) = 0. \]
	The result follows by induction hypothesis.
\end{proof}

\begin{lemma}
	\label{lm:hixi}
	Let $i, j \in I$. The equation \eqref{eq:SYrc2} holds for all $i, j \in I$ and $r,s \in \mathbb{N}_{0}$.
\end{lemma}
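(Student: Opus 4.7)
The case $i \ne j$ is immediate from Lemma \ref{lm:hxinej}. When $i = j$ and $|i| = \bar{1}$ with $c_{ii} \ne 0$, equation \eqref{eq:SYrc2} only asserts the instance $r = 0$ with arbitrary $s$, which is already within the scope of Lemma \ref{lm:r233} (its $r = 0$ claim uses \eqref{eq:mpy4} and Lemma \ref{lm:h0eq} only, both of which are insensitive to the parity of $|i|$). The remaining task is $i = j$ with $|i| = \bar{0}$ and $r, s \ge 0$ arbitrary, which is what the plan below targets.

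Introduce
\[
Y^{\pm}(r, s) := [h_{i, r+1}, x^{\pm}_{i, s}] - [h_{i, r}, x^{\pm}_{i, s+1}] \mp \tfrac{c_{ii}\hbar}{2}\{h_{i, r}, x^{\pm}_{i, s}\},
\]
so that \eqref{eq:SYrc2} at $(r, s)$ is precisely $Y^{\pm}(r, s) = 0$. The strategy is to derive two structural identities for $Y^{\pm}$ and then collapse to a single base case. First, using the Cartan-type identity $h_{i, r+1} = [x^+_{i, r+1}, x^-_{i, 0}]$ from Lemma \ref{lm:cartelrelii}, combined with a super-Jacobi expansion of $[h_{i, r+1}, x^{\pm}_{i, s}]$ and the relation \eqref{eq:SYrc4} from Lemma \ref{lm:xijxijii}, I would establish the antisymmetry
\[
Y^{\pm}(r, s) + Y^{\pm}(s, r) = 0.
\]
Second, starting instead from the alternative presentation $h_{i, r+1} = [x^+_{i, 1}, x^-_{i, r}]$ (again Lemma \ref{lm:cartelrelii}) and applying the same Jacobi/\eqref{eq:SYrc4} toolkit, I would obtain
\[
Y^{\pm}(r, s) = -Y^{\pm}(r + s, 0).
\]

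Lemma \ref{lm:r233} furnishes the base fact $Y^{\pm}(0, m) = 0$ for every $m \in \mathbb{N}_0$. Combining the three ingredients yields the chain
\[
Y^{\pm}(r, s) = -Y^{\pm}(r + s, 0) = Y^{\pm}(0, r + s) = 0,
\]
which finishes the proof.

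The principal obstacle is computational bookkeeping: both derivations require several nested super-Jacobi moves and careful manipulation of auxiliary anticommutator identities, such as $[\{x^+_{i, 0}, x^+_{i, s}\}, x^-_{i, r}] = \{h_{i, r}, x^+_{i, s}\} + \{h_{i, r+s}, x^+_{i, 0}\}$, which drop out naturally once \eqref{eq:SYrc3} is applied term by term. Conceptually, however, no algebraic input is needed beyond the Cartan commutations of Lemma \ref{lm:cartelrelii}, the relation \eqref{eq:SYrc4} of Lemma \ref{lm:xijxijii}, and the base case from Lemma \ref{lm:r233}.
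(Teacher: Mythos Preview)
Your argument is correct and uses the same inputs as the paper (Lemmas~\ref{lm:cartelrelii}, \ref{lm:xijxijii}, and \ref{lm:r233}), but the reduction is organised differently. The paper brackets \eqref{eq:SYrc4} with $x^{-}_{i,1}$ to obtain the single shifted identity $X^{\pm}(r+1,s)+X^{\pm}(s+1,r)=0$, uses the $r=1$ row $X^{\pm}(1,\cdot)=0$ of Lemma~\ref{lm:r233} to conclude $X^{\pm}(r+1,0)=0$, and then applies $[\widetilde{h}_{i1},\cdot]$ to push up in $s$. You instead Jacobi-expand $h_{i,r+1}$ via the two presentations $[x^{+}_{i,r+1},x^{-}_{i,0}]$ and $[x^{+}_{i,1},x^{-}_{i,r}]$ (equivalently, you bracket \eqref{eq:SYrc4} with $x^{\mp}_{i,0}$ and with $x^{\mp}_{i,r}$), producing the pair $Y^{\pm}(r,s)+Y^{\pm}(s,r)=0$ and $Y^{\pm}(r,s)=-Y^{\pm}(r+s,0)$, and then collapse everything to the $r=0$ row of Lemma~\ref{lm:r233}. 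Your route is slightly cleaner in that it avoids the separate $r=1$ base case and the $\widetilde{h}_{i1}$-step; the paper's route needs only one identity but leans on more of Lemma~\ref{lm:r233}. Your handling of the residual case $i=j$, $|i|=\bar{1}$, $c_{ii}\neq 0$, $r=0$ is also more explicit than the paper's, which absorbs it into the ``$r=0$ follows from Lemma~\ref{lm:h0eq}'' line.
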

\begin{proof}
	The case $i \ne j$ is proved in Lemma \ref{lm:hxinej}. Suppose that $i=j$. We prove by induction on $r$ and $s \in \mathbb{N}_{0}$. Let $X^{\pm}(r,s)$ be the result of substracting the right-hand side of \eqref{eq:SYrc2} from the left-hand side. The case $r=0$ and arbitrary $s \in \mathbb{N}_{0}$ follows by Lemma \ref{lm:h0eq}. Suppose that $r \ge 1$ and $X^{\pm}(r,s) = 0$ for all $s \in \mathbb{N}_{0}$. The case $r=1$ follows from Lemma \ref{lm:r233}. Note that $|i| = \bar{0}$. We consider the case $\pm = +$ (the case $\pm=-$ is proved in the same way). Apply $[ x^{-}_{i1}, \cdot ]$ to \eqref{eq:SYrc4} (we can do it by Lemma \ref{lm:xijxijii}) to get by Lemma \ref{lm:cartelrelii}
	\[ [ x^{-}_{i1}, [x_{i,r+1}^{+}, x_{i,s}^{+}] - [x_{i,r}^{+}, x_{i,s+1}^{+}] ] = \frac{c_{ii} \hbar}{2} [ x^{-}_{i1}, \{ x_{i,r}^{+} , x_{i,s}^{+} \} ] \iff \]
	\[ [ h_{i,s+1} , x_{i,r+1}^{+} ] - [ h_{i,r+2} , x_{i,s}^{+} ] - [ h_{i,s+2} , x_{ir}^{+} ] + [ h_{i,r+1} , x_{i,s+1}^{+} ] =  \]
	\[ \frac{c_{ii} \hbar}{2} (-1) ( \{ h_{i,r+1} , x_{is}^{+} \} + \{ h_{i,s+1} , x_{i,r}^{+} \} ) \iff \]
	\[ X^{+}(r+1,s) + X^{+}(s+1,r) = 0. \]
	Select $s=0$ and note that by Lemma \ref{lm:r233} $X^{+}(1,r) = 0$. Thus
	$ X^{+}(r+1,0) = 0$. Suppose that $ X^{\pm}(r+1,s) = 0$ for $s \ge 0$. Apply $[ \widetilde{h}_{i,1}, \cdot ]$ to $ X^{\pm}(r+1,s) = 0$. By Lemma \ref{lm:cartelrelii} we have
	\[ [ \widetilde{h}_{i1} , [h_{i,r+2} , x_{is}^{\pm}] ] = [ \widetilde{h}_{i1} , [h_{i,r+1} , x_{i,s+1}^{\pm}] \pm \frac{c_{ii} \hbar}{2} \{h_{i,r+1}, x_{is}^{\pm}\} ] \iff \]
	\[ [h_{i,r+2} , x_{i,s+1}^{\pm}] = [h_{i,r+1} , x_{i,s+2}^{\pm}] \pm \frac{c_{ii} \hbar}{2} \{h_{i,r+1}, x_{i,s+1}^{\pm}\}. \]
	Thus $ X^{\pm}(r+1,s+1) = 0$. The result follows by induction hypothesis.
\end{proof}

\begin{lemma}
	\label{lm:fassrс}
	Relation \eqref{eq:cssr} holds in the following cases:
	\begin{enumerate}
		\item $(r,s,t)=(0,0,z)$ ($z \in \mathbb{N}_{0}$);
		\item $(r,s,t)=(1,0,z)$ ($z \in \mathbb{N}_{0})$.
	\end{enumerate}
	Equation \eqref{eq:cssra} is satisfied for
	\begin{enumerate}
		\setcounter{enumi}{2}
		\item $(r_1, r_2, r_3, t) = (0,0,0,z)$ ($z \in \mathbb{N}_{0}$); \label{item:lemma8}
		\item $(r_1, r_2, r_3, t) = (1,0,0,z)$ ($z \in \mathbb{N}_{0}$).
	\end{enumerate}	
\end{lemma}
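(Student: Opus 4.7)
The plan is to treat the four cases as two parallel families: cases~1--2 (concerning \eqref{eq:cssr}) and cases~3--4 (concerning \eqref{eq:cssra}). Denote the left-hand side of \eqref{eq:cssr} by $X^{\pm}(r,s,t)$, which is symmetric in its first two arguments, and the left-hand side of \eqref{eq:cssra} by $Y^{\pm}(r_{1},r_{2},r_{3},t)$, symmetric in its first three arguments. The only tool is the even derivation $D_{k} := [\widetilde{h}_{k,1},\,\cdot\,]$, which by Lemma~\ref{lm:h0eq} acts on root generators via $D_{k}(x_{l,r}^{\pm}) = \pm c_{kl}\,x_{l,r+1}^{\pm}$. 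A direct graded-Leibniz calculation, combined with the symmetries of $X^{\pm}$ and $Y^{\pm}$, yields
\[
D_{k}X^{\pm}(0,0,t) = \pm\bigl(2c_{ki}\,X^{\pm}(1,0,t) + c_{kj}\,X^{\pm}(0,0,t+1)\bigr),
\]
\[
D_{k}Y^{\pm}(0,0,0,t) = \pm\bigl(3c_{k,m+n}\,Y^{\pm}(1,0,0,t) + c_{k,m+n-1}\,Y^{\pm}(0,0,0,t+1)\bigr).
\]

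Cases~1 and~2 will be proved by joint induction on $z$ with hypothesis $X^{\pm}(0,0,z)=0$; the base case $z=0$ is precisely \eqref{eq:mpy8}. Given the hypothesis, applying $D_{k}$ and setting the image to zero produces one linear relation in the two unknowns $X^{\pm}(1,0,z)$ and $X^{\pm}(0,0,z+1)$. Choosing two distinct indices $k_{1},k_{2}\in I$ yields a $2\times 2$ linear system whose coefficient matrix has rows $\bigl(2c_{k_{\ell}i},\,c_{k_{\ell}j}\bigr)$ for $\ell=1,2$; whenever this matrix is nondegenerate, both unknowns vanish, which is case~2 at level $z$ and case~1 at level $z+1$, closing the induction. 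Cases~3 and~4 are handled by the identical joint induction, with base case \eqref{eq:mpy8a} and coefficient rows $\bigl(3c_{k_{\ell},m+n},\,c_{k_{\ell},m+n-1}\bigr)$.

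The main obstacle is verifying that in every configuration allowed by the Dynkin diagram of $\mathfrak{g}$ one can select $k_{1},k_{2}\in I$ for which the corresponding $2\times 2$ determinant is nonzero. This is a case analysis of exactly the same character as in the proof of Lemma~\ref{lm:SYrc34}: for cases~1--2 one scans $k$ over the small neighborhood $\{i-1,\,i,\,j,\,j+1\}$ of the indices present in the relation (the canonical choice $k_{1}=i+2$ with $c_{k_{1}i}=0$, paired with $k_{2}=i$ for which $c_{ii}\ne 0$, produces a nonzero determinant whenever $i+2\in I$), and for cases~3--4 one similarly scans $\{m+n-2,\,m+n-1,\,m+n\}$ (the canonical choice $k_{1}=m+n-2$ paired with $k_{2}=m+n$, using $c_{m+n,m+n}\ne 0$ by the black-vertex condition of type~\ref{fig:dynkin-diagrams-type2b}). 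Small-rank and boundary configurations, where the canonical choice is unavailable, are resolved by alternate pairings exactly as in the sub-cases of Lemma~\ref{lm:SYrc34}, substituting the explicit Cartan values dictated by whether each participating vertex is white, grey, or black (Figures~\ref{fig:dynkin-diagrams-type1}--\ref{fig:dynkin-diagrams-type2b}).
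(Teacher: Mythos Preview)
Your proposal is correct and follows essentially the same strategy the paper invokes by pointing to \cite[Lemma~2.33]{GNW18}: one builds a homogeneous linear system in the ``raised-index'' Serre expressions and checks that the coefficient matrix can be made nonsingular, forcing the trivial solution. Your choice of the derivations $D_{k}=[\widetilde{h}_{k,1},\cdot]$ and the joint induction packaging are exactly in this spirit, and your displayed identities for $D_{k}X^{\pm}(0,0,t)$ and $D_{k}Y^{\pm}(0,0,0,t)$ are computed correctly. One remark that would let you shorten the Dynkin case analysis: since the symmetrized Cartan matrix $C=(c_{ij})$ of $B(m,n)$ is nondegenerate, the columns indexed by $i$ and $j$ (respectively $m+n$ and $m+n-1$) are linearly independent, so the existence of $k_{1},k_{2}\in I$ with nonzero $2\times 2$ minor is automatic and does not require an exhaustive scan of vertex colourings.
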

\begin{proof}
	The proof remains consistent with the one presented in \cite[Lemma 2.33]{GNW18}. It's important to highlight that the similarity in the proof arises from the construction of a system of homogeneous linear equations, which, in turn, results in only the trivial solution.
\end{proof}

\begin{lemma}
	\label{lm:bcssr}
	Relations \eqref{eq:bssr} and \eqref{eq:cssr} are satisfied for all $i,j \in I$ and $r,s,t \in \mathbb{N}_{0}$.
\end{lemma}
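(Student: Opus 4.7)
My plan is to attack \eqref{eq:bssr} and \eqref{eq:cssr} by a uniform two-step propagation from the base cases (MY7 for \eqref{eq:bssr} and Lemma \ref{lm:fassrс} for \eqref{eq:cssr}), using the raising operators $\dbtilde{h}_{ki,v}$ of Lemma \ref{lm:hwdpr}, which act as super-derivations of the bracket with the clean rule $[\dbtilde{h}_{ki,v},x^{\pm}_{i,s}]=\pm c_{ki}\,x^{\pm}_{i,s+v}$ on the matching index and with $\pm c_{kl}\,x^{\pm}_{l,t+v}$ plus lower-order polynomial-in-$\hbar$ corrections at strictly smaller values of the $t$-index on any $x^{\pm}_{l,t}$ with $l\neq i$. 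The crucial observation is that in each of the two applications of the propagation these corrections land inside the layer already known to vanish.

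For \eqref{eq:bssr} the case $i\neq j$ is settled by Lemma \ref{lm:SYrc34}, so only $i=j$ with $c_{ii}=0$ (equivalently $|i|=\bar{1}$) remains. Fix any neighbour $k\in I$ of $i$ in the Dynkin diagram (so $c_{ki}\neq 0$; such $k$ exists because $c_{ii}=0$ cannot occur in rank one and the diagram of $\mathfrak{g}$ is connected) and set $X^{\pm}(r,s):=[x^{\pm}_{i,r},x^{\pm}_{i,s}]$, which is symmetric in $r,s$ since both arguments are odd. Applying $[\dbtilde{h}_{ki,v},\cdot]$ to $X^{\pm}(0,0)=0$ (MY7) produces $\pm c_{ki}(X^{\pm}(v,0)+X^{\pm}(0,v))=0$ (no $x^{\pm}_{l,\cdot}$ with $l\neq i$ is present), and the symmetry reduces this to $\pm 2c_{ki}X^{\pm}(v,0)=0$, so $X^{\pm}(v,0)=0$ for every $v\in\mathbb{N}_{0}$. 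A second application of $[\dbtilde{h}_{ki,s},\cdot]$ to $X^{\pm}(r,0)=0$ then yields $\pm c_{ki}(X^{\pm}(r+s,0)+X^{\pm}(r,s))=0$, whose first summand vanishes by the previous step, so $X^{\pm}(r,s)=0$ for all $r,s$.

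For \eqref{eq:cssr}, with $c_{ii}\neq 0$ and $j\in\{i-1,i+1\}$, set
\[ Y^{\pm}(r,s,t):=[x^{\pm}_{i,r},[x^{\pm}_{i,s},x^{\pm}_{j,t}]]+[x^{\pm}_{i,s},[x^{\pm}_{i,r},x^{\pm}_{j,t}]], \]
which is symmetric in $r,s$, and take as input the base case $Y^{\pm}(0,0,t)=0$ for all $t\in\mathbb{N}_{0}$ from Lemma \ref{lm:fassrс}. Applying $[\dbtilde{h}_{ki,v},\cdot]$ to $Y^{\pm}(0,0,t)=0$ for a neighbour $k$ of $i$ produces two kinds of contributions: the action on the two $x^{\pm}_{i,0}$-factors yields $\pm c_{ki}(Y^{\pm}(v,0,t)+Y^{\pm}(0,v,t))$, while the action on the single $x^{\pm}_{j,t}$-factor yields a $\Bbbk[[\hbar]]$-linear combination of the quantities $Y^{\pm}(0,0,t')$ with $t'\leq t+v$, all of which vanish by the base case. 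Symmetry of $Y^{\pm}$ in its first two arguments then collapses the remaining term to $\pm 2c_{ki}Y^{\pm}(v,0,t)=0$, forcing $Y^{\pm}(v,0,t)=0$ for every $v,t$. A second application of $[\dbtilde{h}_{ki,s},\cdot]$ to $Y^{\pm}(r,0,t)=0$ gives $\pm c_{ki}(Y^{\pm}(r+s,0,t)+Y^{\pm}(r,s,t))$ plus corrections landing in the already-vanishing layer $Y^{\pm}(r,0,\cdot)$, so $Y^{\pm}(r,s,t)=0$ for all $r,s,t\in\mathbb{N}_{0}$.

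The main obstacle is certifying that the operators $\dbtilde{h}_{ki,v}$ can legitimately be used at every level $v$ appearing in the above argument: Lemma \ref{lm:hwdpr} relies on Lemma \ref{lm:thfdpr}, whose hypotheses require, in particular, that \eqref{eq:SYrc5} be available at every index $i\in I$ with $c_{ii}=0$ up to level $v$. Since Lemma \ref{lm:SYrc5} (which supplies \eqref{eq:SYrc5} for all $r,s$) and the present lemma sit at the same logical stage, this is handled by a single joint induction on $r+s$ across \eqref{eq:SYrc5}, \eqref{eq:bssr} and \eqref{eq:cssr}, each step using $\dbtilde{h}_{ki,v}$ only at a level for which the previous step of the joint induction has already certified the required instances of \eqref{eq:SYrc5}; once this layer-by-layer bookkeeping is set up, the clean two-step propagation above runs without further difficulty.
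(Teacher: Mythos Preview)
Your two-step propagation via the operators $\dbtilde{h}_{ki,v}$ is exactly the technique the paper invokes (through \cite[Lemma~3.21]{MS22}; compare the identical mechanism in Lemmas~\ref{lm:xijxijii} and~\ref{lm:cssra} of the present paper), so the core of your argument matches the intended proof.

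Your final paragraph, however, manufactures and then only hand-waves away a difficulty that is not really present. The hypotheses of Lemma~\ref{lm:thfdpr} are stated universally for convenience, but deriving \eqref{eq:thx} for a specific pair $(k,l)$ uses only the commutators $[h_{k,r'},x^{\pm}_{l,s'}]$, and \eqref{eq:SYrc5} is needed solely in the case $k=l$ with $c_{kk}=0$. In your treatment of \eqref{eq:bssr} (case $i=j$, $c_{ii}=0$) the operator $\dbtilde{h}_{ki,v}$ with $k$ a neighbour acts only on $x^{\pm}_{i,\cdot}$ and $k\neq i$, so \eqref{eq:SYrc5} never enters. For \eqref{eq:cssr} you should take $k=i$ rather than a neighbour: the clause ``$i\neq m+n$ in type~\hyperref[fig:dynkin-diagrams-type2b]{$\romannumeral2b$}'' together with $c_{ii}\neq0$ forces $|i|=\bar0$, so \eqref{eq:SYrc2} for both $(i,i)$ (Lemma~\ref{lm:hixi}) and $(i,j)$ (Lemma~\ref{lm:hxinej}) is already established, and again \eqref{eq:SYrc5} is not invoked. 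With these choices of $k$ your argument is complete as it stands, and no joint induction with Lemma~\ref{lm:SYrc5} is required. This matters because, as written, your proposed joint induction is only a sketch (you do not verify that the layer-by-layer bookkeeping actually closes), so without the above observation the proof would have a genuine gap; your choice of a neighbour $k$ in the \eqref{eq:cssr} case is also problematic when $i$ is an endpoint of the diagram and its unique neighbour $j$ satisfies $c_{jj}=0$.
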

\begin{proof}
	The proof is the same as in \cite[Lemma 3.21]{MS22}.
\end{proof}

\begin{lemma}
	\label{lm:SYrc5}
	The relation \eqref{eq:SYrc5} is satisfied for all $i,j \in I$ and $r,s \in \mathbb{N}_{0}$.
\end{lemma}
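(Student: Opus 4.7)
The plan is to prove \eqref{eq:SYrc5} by induction on $r$, with a separate reduction handling arbitrary $s$. I would assume throughout that $c_{ii} = 0$ and write $X^{\pm}(r, s) := [h_{i, r}, x_{i, s}^{\pm}]$. The first move is to use the recurrence $x_{i, s+1}^{\pm} = \pm c_{i, i+1}^{-1}[\widetilde{h}_{i+1, 1}, x_{i, s}^{\pm}]$ from \eqref{eq:rec1} together with the super Jacobi identity and the commutation $[h_{i, r}, \widetilde{h}_{i+1, 1}] = 0$ (which follows from Lemmas \ref{lm:cartelrelii} and \ref{lm:cartelrelinej}, since $\widetilde{h}_{i+1, 1}$ is a polynomial in $h_{i+1, 0}$ and $h_{i+1, 1}$) to derive $X^{\pm}(r, s+1) = \pm c_{i, i+1}^{-1}[\widetilde{h}_{i+1, 1}, X^{\pm}(r, s)]$. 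Hence the vanishing of $X^{\pm}(r, 0)$ immediately implies the vanishing of $X^{\pm}(r, s)$ for all $s$.

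For the base cases, Lemma \ref{lm:h0eq} gives $[h_{i, 0}, x_{i, s}^{\pm}] = \pm c_{ii} x_{i, s}^{\pm} = 0$ and $[\widetilde{h}_{i, 1}, x_{i, s}^{\pm}] = \pm c_{ii} x_{i, s+1}^{\pm} = 0$ for all $s$; combined with $\widetilde{h}_{i, 1} = h_{i, 1} - \frac{\hbar}{2} h_{i, 0}^2$ and $[h_{i, 0}^2, x_{i, s}^{\pm}] = 0$, this yields $X^{\pm}(0, s) = X^{\pm}(1, s) = 0$ for all $s$.

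For the inductive step, I would assume $X^{\pm}(v, s) = 0$ for all $0 \le v \le r$ and all $s$, and by the reduction above prove $X^{\pm}(r+1, 0) = 0$. Rewriting $h_{i, r+1} = [x_{i, r}^{+}, x_{i, 1}^{-}]$ via \eqref{eq:SYrc3} (available by Lemma \ref{lm:cartelrelii}) and applying the super Jacobi identity for three odd elements gives, in the case $\pm = +$,
\[ [h_{i, r+1}, x_{i, 0}^{+}] = [x_{i, r}^{+}, [x_{i, 1}^{-}, x_{i, 0}^{+}]] + [x_{i, 1}^{-}, [x_{i, r}^{+}, x_{i, 0}^{+}]]. \]
The second bracket vanishes because $[x_{i, r}^{+}, x_{i, 0}^{+}] = 0$ by \eqref{eq:bssr} extended to all $r$ via Lemma \ref{lm:bcssr}, while the first equals $[x_{i, r}^{+}, h_{i, 1}] = -[h_{i, 1}, x_{i, r}^{+}] = 0$ by the base case. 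The case $\pm = -$ is symmetric and yields $[h_{i, r+1}, x_{i, 0}^{-}] = [x_{i, 1}^{-}, h_{i, r}] = -[h_{i, r}, x_{i, 1}^{-}] = 0$ by the inductive hypothesis.

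The hard point of the argument is that relation \eqref{eq:SYrc2} is unavailable for $i = j$ with $c_{ii} = 0$, precisely the regime of this lemma, so the standard device of conjugating by $\widetilde{h}_{i, 1}$ to shift indices inside $[h_{i, r}, x_{i, s}^{\pm}]$ fails outright. The workaround is the Serre-type identity $[x_{i, r}^{\pm}, x_{i, 0}^{\pm}] = 0$ supplied by Lemma \ref{lm:bcssr}, which is what makes the super Jacobi expansion collapse; this explains why the present lemma must come after Lemma \ref{lm:bcssr} in the proof order.
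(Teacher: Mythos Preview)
Your argument is correct. The reduction in $s$ via $[\widetilde{h}_{i+1,1},\cdot]$ and the commutativity $[h_{i,r},\widetilde{h}_{i+1,1}]=0$ is valid (note that $c_{ii}=0$ forces $i<m+n$ in type $B(m,n)$, so $i+1\in I$ and \eqref{eq:rec1} applies). The inductive step is also sound: writing $h_{i,r+1}=[x_{i,r}^{+},x_{i,1}^{-}]$ via Lemma~\ref{lm:cartelrelii}, the super Jacobi expansion with three odd elements collapses exactly as you describe, because $[x_{i,r}^{\pm},x_{i,s}^{\pm}]=0$ is supplied by Lemma~\ref{lm:bcssr} (this is the $i=j$, $c_{ii}=0$ instance of \eqref{eq:bssr}), and the residual terms $[h_{i,1},x_{i,r}^{+}]$ and $[h_{i,r},x_{i,1}^{-}]$ vanish by the base case and the inductive hypothesis respectively.

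The paper does not give its own argument here but defers to \cite[Lemma~3.22]{MS22}, so a line-by-line comparison is not possible. Your proof is self-contained, uses only lemmas already established at this point in the paper (Lemmas~\ref{lm:h0eq}, \ref{lm:cartelrelii}, \ref{lm:cartelrelinej}, \ref{lm:bcssr}), and respects the logical ordering; in particular your observation that Lemma~\ref{lm:bcssr} is the essential input explaining why \eqref{eq:SYrc5} is placed after it is accurate.
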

\begin{proof}
	The proof is the same as in \cite[Lemma 3.22]{MS22}.
\end{proof}

\begin{lemma}
	\label{lm:cssra}
	The relation \eqref{eq:cssra} is satisfied for all $r_1, r_2, r_3, t \in \mathbb{N}_{0}$.
\end{lemma}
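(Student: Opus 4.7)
The plan is to propagate from the base case supplied by Lemma \ref{lm:fassrс}(\ref{item:lemma8}) by applying the derivation $\operatorname{ad}(\widetilde{h}_{j,r})$ at the grey-dot vertex $j = m+n-1$. Denote the left-hand side of \eqref{eq:cssra} by $P^{\pm}(r_1, r_2, r_3, t)$; it is symmetric in $(r_1, r_2, r_3)$ by construction. Set $i = m+n$; in a type IIb configuration one has $c_{jj} = 0$ and $c_{ji} \ne 0$. By the point at which Lemma \ref{lm:cssra} is proved, all hypotheses of Lemma \ref{lm:thfdpr} are available for arbitrary $r$, and in particular $[\widetilde{h}_{j,r}, x_{j,t}^{\pm}] = 0$ thanks to $c_{jj} = 0$, so the derivation $\operatorname{ad}(\widetilde{h}_{j,r})$ acts on $P^{\pm}$ only through the three outer generators $x_{i,r_k}^{\pm}$.

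Introducing the shift-sum
\[ Q_s^{\pm}(r_1, r_2, r_3, t) := P^{\pm}(r_1+s, r_2, r_3, t) + P^{\pm}(r_1, r_2+s, r_3, t) + P^{\pm}(r_1, r_2, r_3+s, t), \]
a direct computation using Lemma \ref{lm:thfdpr} yields
\[ [\widetilde{h}_{j,r}, P^{\pm}(r_1, r_2, r_3, t)] = \pm c_{ji}\, Q_r^{\pm} \pm c_{ji} \sum_{p=1}^{\lfloor r/2 \rfloor} \binom{r}{2p} \frac{(\hbar c_{ji}/2)^{2p}}{2p+1} Q_{r-2p}^{\pm}. \]
Since $Q_0^{\pm} = 3 P^{\pm}$, the key consequence is the following: whenever $P^{\pm}(r_1, r_2, r_3, t) = 0$ for a given tuple, induction on $r$ starting from $Q_0^{\pm} = 0$ shows $Q_r^{\pm}(r_1, r_2, r_3, t) = 0$ for every $r \ge 1$.

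With this tool and the base case $P^{\pm}(0, 0, 0, t) = 0$, I would finish in three stages. Stage (a): $Q_r^{\pm}(0, 0, 0, t) = 3 P^{\pm}(r, 0, 0, t)$, so $P^{\pm}(r, 0, 0, t) = 0$ for all $r, t$. Stage (b): $Q_r^{\pm}(r_1, 0, 0, t) = P^{\pm}(r_1+r, 0, 0, t) + 2 P^{\pm}(r_1, r, 0, t)$, and the first summand vanishes by (a), so $P^{\pm}(r_1, r_2, 0, t) = 0$ for all $r_1, r_2, t$. Stage (c): $Q_r^{\pm}(r_1, r_2, 0, t) = P^{\pm}(r_1+r, r_2, 0, t) + P^{\pm}(r_1, r_2+r, 0, t) + P^{\pm}(r_1, r_2, r, t)$, and the first two summands vanish by (b), yielding $P^{\pm}(r_1, r_2, r_3, t) = 0$ in full generality.

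The main obstacle is conceptual rather than computational: the outer $S_3$-symmetrization couples all three $r$-indices, so a single $\operatorname{ad}(\widetilde{h}_{k,1})$ can never isolate a shift in just one coordinate. The trick is to exploit two features simultaneously — the annihilation $[\widetilde{h}_{j,r}, x_{j,t}^{\pm}] = 0$ forced by $c_{jj} = 0$, and the whole family $\{\widetilde{h}_{j,r}\}_{r \ge 1}$ made available by Lemma \ref{lm:thfdpr} — so that the pool $\{Q_r^{\pm}\}_{r \ge 1}$ supplies enough independent linear relations for the three-stage extraction above to succeed.
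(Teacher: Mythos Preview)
Your argument is correct, and it is a genuine variant of the paper's proof rather than a restatement of it. The paper runs stages (a) and (b) with the double-tilde operator $\dbtilde{h}_{m+n-1,m+n,r}$ of Lemma~\ref{lm:hwdpr}: this element shifts each $x_{m+n,s}^{\pm}$ cleanly by $r$ with no lower-order corrections, while its action on $x_{m+n-1,t}^{\pm}$ produces a polynomial sum in $t$ that is harmless because $X^{\pm}(0,0,0,z)=0$ (respectively $X^{\pm}(r,0,0,z)=0$) is already known for all $z$. For stage (c) the paper switches to $\widetilde{h}_{m+n,1}$ and inducts on the third index. Your route is the mirror image: you work entirely with $\widetilde{h}_{m+n-1,r}$, use $c_{m+n-1,m+n-1}=0$ to kill the $t$-slot outright, and accept lower-order corrections $Q_{r-2p}^{\pm}$ on the $x_{m+n}$-side, which you then clear by a short inner induction on $r$. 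The trade-off is that you avoid both the extra construction $\dbtilde{h}_{ij,r}$ and the operator switch in stage (c), obtaining a uniform three-stage argument with a single family of derivations, at the cost of the auxiliary step $Q_{r-2p}^{\pm}=0 \Rightarrow Q_r^{\pm}=0$; the paper's choice makes each individual step cleaner but is less self-contained.
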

\begin{proof}
	Let $X^{\pm}(r_1,r_2,r_3,t)$ be the left-hand side of \eqref{eq:cssra}. We prove by induction on $r_1$, $r_2$, $r_3$, and $t \in \mathbb{N}_{0}$. The initial case $(r_1,r_2,r_3,t)=(0,0,0,0)$ is our initial assumption \eqref{eq:mpy8a}. We have proved in Lemma \ref{lm:fassrс} that $X^{\pm}(0,0,0,t)=0$ for all $t \in \mathbb{N}_{0}$. Note that for any $r,t \in \mathbb{N}_{0}$
	\[ 0 = [ \dbtilde{h}_{m+n-1,m+n,r} , X^{\pm}(0,0,0,t) ] = \sum_{z=t}^{t+r} a_{z} X^{\pm}(0,0,0,z) + \]
	\[ \pm (\alpha_{m+n-1}, \alpha_{m+n}) X^{\pm}(r,0,0,t) = \pm (\alpha_{m+n-1}, \alpha_{m+n}) X^{\pm}(r,0,0,t) \Rightarrow X^{\pm}(r,0,0,t) = 0, \]
	where $a_z \in \Bbbk$ ($r \le z \le t+r$).
	
	It follows that for any $r,s,t \in \mathbb{N}_{0}$
	\[ 0 = [\dbtilde{h}_{m+n-1,m+n,s} , X^{\pm}(r,0,0,t)] = \sum_{z=t}^{t+s} a_{z} X^{\pm} ( r,0,0,z ) + \]
	\[ \pm (\alpha_{m+n-1}, \alpha_{m+n}) ( X^{\pm} ( r+s,0,0,t ) + X^{\pm} ( r,s,0,t ) ) = \pm (\alpha_{m+n-1}, \alpha_{m+n}) X^{\pm} ( r,s,0,t ) \Rightarrow \]	
	\[ X^{\pm} ( r,s,0,t ) = 0, \]
	where $a_z \in \Bbbk$ ($r \le z \le t+s$).

	Suppose that for any $r, s, t \in \mathbb{N}_{0}$ and for $0 \le l \le u$ ($s \in \mathbb{N}_{0}$), $X^{\pm}(r,s,u,t)=0$. Then
	\[ 0 = [\widetilde{h}_{m+n,1} , X^{\pm}(r,s,u,t)] = (\alpha_{m+n}, \alpha_{m+n-1}) X^{\pm} ( r,s,u,t+1 ) + \]
	\[ (\alpha_{m+n}, \alpha_{m+n}) ( X^{\pm}(r+1,s,u,t) + X^{\pm} (r,s+1,u,t) + X^{\pm} (r,s,u+1,t) ) =  \]
	\[ (\alpha_{m+n}, \alpha_{m+n}) X^{\pm} (r,s,u+1,t) \Rightarrow X^{\pm} (r,s,u+1,t) = 0. \]
	Thus the result follows by induction hypothesis.
\end{proof}

\begin{lemma}
	\label{lm:leqmp}
	The relation \eqref{eq:qssr} is satisfied for all $j \in I$ and $r,s \in \mathbb{N}_{0}$.
\end{lemma}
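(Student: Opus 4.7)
The plan is to extend the quartic Serre relation \eqref{eq:mpy9} from $(r, s) = (0, 0)$ to all $(r, s) \in \mathbb{N}_{0}^{2}$ by a double induction, driven by the derivation $[\widetilde{h}_{m, 1}, \cdot\,]$ whose clean shift property is recorded in \eqref{eq:h1eq}, combined with the symmetry stated in Remark \ref{rm:yangrelrew}(3). The latter is available at this point, since \eqref{eq:bssr} and \eqref{eq:cssr} have already been established in Lemma \ref{lm:bcssr}.

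Write $X^{\pm}(r, s) := [\,[x_{j-1, r}^{\pm}, x_{j, 0}^{\pm}],\, [x_{j, 0}^{\pm}, x_{j+1, s}^{\pm}]\,]$. The base case $X^{\pm}(0, 0) = 0$ is precisely \eqref{eq:mpy9}. Applying $[\widetilde{h}_{m, 1}, \cdot\,]$ to $X^{\pm}(r, s) = 0$ distributes across the four leaves of the nested bracket; by \eqref{eq:h1eq} each term is a pure index shift $\pm c_{m, k} x_{k, t+1}^{\pm}$ on the corresponding leaf. The two terms for which the shift lands on a central $x_{j, 0}^{\pm}$ combine into
\[
[\,[x_{j-1, r}^{\pm}, x_{j, 1}^{\pm}],\, [x_{j, 0}^{\pm}, x_{j+1, s}^{\pm}]\,] + [\,[x_{j-1, r}^{\pm}, x_{j, 0}^{\pm}],\, [x_{j, 1}^{\pm}, x_{j+1, s}^{\pm}]\,] = 0
\]
by Remark \ref{rm:yangrelrew}(3), so the surviving identity reduces to
\[
\pm c_{m, j-1}\, X^{\pm}(r+1, s) \pm c_{m, j+1}\, X^{\pm}(r, s+1) = 0.
\]

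Because the Dynkin diagram of $B(m, n)$ is a chain, $c_{j-1, j+1} = 0$, so the choices $m = j-1$ and $m = j+1$ decouple the two unknowns and yield
\[
c_{j-1, j-1}\, X^{\pm}(r+1, s) = 0, \qquad c_{j+1, j+1}\, X^{\pm}(r, s+1) = 0.
\]
Whenever both diagonal entries $c_{j-1, j-1}$ and $c_{j+1, j+1}$ are nonzero the inductive step closes immediately. The main obstacle is isotropic vertices at $j-1$ or $j+1$, where the corresponding diagonal vanishes; in those cases we substitute $m = j - 2$ or $m = j + 2$ whenever these indices lie in $I$, since $c_{m, j \mp 1}$ is nonzero by the chain structure while $c_{m, j \pm 1} = 0$ by non-adjacency. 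The remaining corner cases at the boundary of $I$ (for instance $j = 2$ with $c_{1, 1} = 0$, or $j = m + n - 1$ with $c_{m+n, m+n} = 0$) are treated by the same device used in the proof of Lemma \ref{lm:cssra}: replace $\widetilde{h}_{m, 1}$ with the renormalized element $\dbtilde{h}_{k, j \mp 1, p}$ from Lemma \ref{lm:hwdpr}, whose action on $x_{j \mp 1, t}^{\pm}$ is a clean shift by \eqref{eq:thxo} and whose action on the remaining three generators produces only $\Bbbk$-linear combinations of $X^{\pm}$ at strictly lower total degree, which vanish by the inductive hypothesis. Induction on $r + s$ then delivers the claim for all $r, s \in \mathbb{N}_{0}$.
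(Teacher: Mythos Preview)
Your overall induction strategy (apply $[\widetilde{h}_{m,1},\cdot]$ and solve a linear system) matches the paper's, but there is a genuine gap in the step where you eliminate the two ``middle'' contributions. Remark~\ref{rm:yangrelrew}(3) is a statement about the Drinfeld super Yangian $Y_{\hbar}(\mathfrak g)$ as defined by the full relation list \textbf{Y1}--\textbf{Y10}; in particular it presupposes \eqref{eq:qssr} for all $r,s$. At this point in the proof of Theorem~\ref{th:mpy} you are working in the algebra presented by \textbf{MY1}--\textbf{MY10} and are trying to \emph{derive} \eqref{eq:qssr}. Invoking Remark~\ref{rm:yangrelrew}(3) is therefore circular: the identity
\[
 [\,[x_{j-1,r}^{\pm},x_{j,1}^{\pm}],[x_{j,0}^{\pm},x_{j+1,s}^{\pm}]\,]
+[\,[x_{j-1,r}^{\pm},x_{j,0}^{\pm}],[x_{j,1}^{\pm},x_{j+1,s}^{\pm}]\,]=0
\]
is not known to hold in the minimalistic algebra from \eqref{eq:bssr} and \eqref{eq:cssr} alone, and your justification that ``the latter is available since \eqref{eq:bssr} and \eqref{eq:cssr} have been established'' does not suffice. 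Concretely, for $m\in\{j-1,j+1\}$ the coefficient $c_{m,j}$ in front of those middle terms is nonzero, so your two displayed equations $c_{j\mp1,j\mp1}\,X^{\pm}(r+1,s)=0$ and $c_{j\pm1,j\pm1}\,X^{\pm}(r,s+1)=0$ are not what one actually obtains.

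The paper sidesteps exactly this issue: it keeps the symmetrized middle sum as a \emph{third} unknown, applies three derivations $[\widetilde{h}_{m,1},\cdot]$, $[\widetilde{h}_{n,1},\cdot]$, $[\widetilde{h}_{k,1},\cdot]$, and then shows by a case analysis on the Dynkin subdiagrams of types \hyperref[fig:dynkin-diagrams-type1]{$\romannumeral1$}, \hyperref[fig:dynkin-diagrams-type2a]{$\romannumeral2a$}, \hyperref[fig:dynkin-diagrams-type2b]{$\romannumeral2b$} that the resulting $3\times 3$ matrix $\bigl((\alpha_{p},\alpha_{q})\bigr)_{p\in\{m,n,k\},\,q\in\{j-1,j,j+1\}}$ has nonzero determinant for a suitable choice of $m,n,k$ (in particular it always takes $n=j$, exploiting $c_{jj}=0$). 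Your argument becomes correct if you either reproduce this $3\times3$ analysis, or restrict to choices of $m$ with $c_{m,j}=0$ (e.g.\ $m=j$ together with one of $m=j\pm2$) so that the middle terms carry zero coefficient from the outset; the boundary cases then still require a genuine determinant check rather than an appeal to $\dbtilde{h}$-operators.
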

\begin{proof}
	Let $X^{\pm}(r,0,0,s)$ be the left-hand side of \eqref{eq:qssr}. We prove by induction on $r$ and $s \in \mathbb{N}_{0}$. The initial case $(r,0,0,s)=(0,0,0,0)$ is our initial assumption \eqref{eq:mpy9}. Note that if we apply $[\widetilde{h}_{m1} , \cdot]$, $[\widetilde{h}_{n1} , \cdot]$ and $[\widetilde{h}_{k1} , \cdot]$ ($m,n,k \in I$) to $X^{\pm}(r,0,0,s)$ we get from \eqref{eq:h1eq}
	\[ 0 = (\alpha_{m}, \alpha_{j-1}) X^{\pm}(r+1,0,0,s) + (\alpha_{m}, \alpha_{j}) ( X^{\pm}(r,1,0,s) + X^{\pm}(r,0,1,s) ) + (\alpha_{m}, \alpha_{j+1}) X^{\pm}(r,0,0,s+1), \]
	\[ 0 = (\alpha_{n}, \alpha_{j-1}) X^{\pm}(r+1,0,0,s) + (\alpha_{n}, \alpha_{j}) ( X^{\pm}(r,1,0,s) + X^{\pm}(r,0,1,s) ) + (\alpha_{n}, \alpha_{j+1}) X^{\pm}(r,0,0,s+1), \]
	\[ 0 = (\alpha_{k}, \alpha_{j-1}) X^{\pm}(r+1,0,0,s) + (\alpha_{k}, \alpha_{j}) ( X^{\pm}(r,1,0,s) + X^{\pm}(r,0,1,s) ) + (\alpha_{k}, \alpha_{j+1}) X^{\pm}(r,0,0,s+1). \]
	Consider the matrix
	\[ A = \begin{pmatrix}(\alpha_{m},\alpha_{j-1}) & (\alpha_{m},\alpha_{j}) & (\alpha_{m},\alpha_{j+1})\\ (\alpha_{n},\alpha_{j-1}) & (\alpha_{n},\alpha_{j}) & (\alpha_{n},\alpha_{j+1}) \\ (\alpha_{k},\alpha_{j-1}) & (\alpha_{k},\alpha_{j}) & (\alpha_{k},\alpha_{j+1}) \end{pmatrix}\]
	When the determinant of $A$ is nonzero, we have $X^{\pm}(r+1,0,0,s)=X^{\pm}(r,0,0,s+1)=0$.
	In order to determine when the determinant of $A$ is nonzero depending on the grading of roots it is sufficient to consider the following Dynkin (sub)diagrams: \hyperref[fig:dynkin-diagrams-type1]{$\romannumeral1$}, \hyperref[fig:dynkin-diagrams-type2a]{$\romannumeral2a$} and \hyperref[fig:dynkin-diagrams-type2b]{$\romannumeral2b$}.
	
	Select $m=j-1, n=j, k=j+1$ for \hyperref[fig:dynkin-diagrams-type2a]{$\romannumeral2a$} and \hyperref[fig:dynkin-diagrams-type2b]{$\romannumeral2b$}. Then $\det(A) = - ( ( \alpha_{j-1} , \alpha_{j-1}) + ( \alpha_{j+1} , \alpha_{j+1} ) )$. It is easy to see that the determinant is nonzero in that case.
	
	Note that case \hyperref[fig:dynkin-diagrams-type1]{$\romannumeral1$} can occur only when $|I| > 3$.
	
	\begin{enumerate}
		\item  Select for \hyperref[fig:dynkin-diagrams-9]{case 1}
		
		$|j-1|=|j+1|=\bar{0} \Rightarrow m=j-2, n=j, k=j+1$;
		
		$|j-1|=\bar{1}, |j+1|=\bar{0} \Rightarrow m=j-1, n=j, k=j+1$;
		
		$|j-1|=\bar{0}, |j+1|=\bar{1} \Rightarrow m=j-1, n=j, k=j+1$;
		
		$|j-1|=|j+1|=\bar{1} \Rightarrow m=j-2, n=j, k=j+1$.
		
		\item 	Select for \hyperref[fig:dynkin-diagrams-10]{case 2}
		
		$|j-1|=|j+1|=\bar{0} \Rightarrow m=j+2, n=j, k=j+1$;
		
		$|j-1|=\bar{1}, |j+1|=\bar{0} \Rightarrow m=j-1, n=j, k=j+1$;
		
		$|j-1|=\bar{0}, |j+1|=\bar{1} \Rightarrow m=j-1, n=j, k=j+1$;
		
		$|j-1|=|j+1|=\bar{1} \Rightarrow m=j+2, n=j, k=j+1$.
	\end{enumerate}
	
	\begin{center}
		\begin{minipage}{0.4\textwidth}
			\centering	
			\begin{tikzpicture}
				\tikzset{/Dynkin diagram, root radius=.17cm}
				\dynkin[text style/.style={scale=1.0},labels*={j-2,j-1,j,j+1},edge length=1.7cm]A{xxtx}
			\end{tikzpicture}
			\captionsetup{type=figure}
			\captionof{figure}{case 1}
			\label{fig:dynkin-diagrams-9}
		\end{minipage}%
		
		\vspace{2em}
		
		\begin{minipage}{0.4\textwidth}
		\centering	
		\begin{tikzpicture}
			\tikzset{/Dynkin diagram, root radius=.17cm}
			\dynkin[text style/.style={scale=1.0},labels*={j-1,j,j+1,j+2},edge length=1.7cm]A{xtxx}
		\end{tikzpicture}
		\captionsetup{type=figure}
		\captionof{figure}{case 2}
		\label{fig:dynkin-diagrams-10}
	\end{minipage}%
	\end{center}
	
	It is easy to see that the determinant is nonzero in all these cases. The result follows by induction hypothesis.

\end{proof}

\vspace{10pt}

\end{document}